\newtheorem{thm}{Theorem}
\newtheorem{lem}{Lemma}
\newcommand{\tref}[1]{Theorem \ref{thm:#1}}
\newcommand{\lref}[1]{Lemma \ref{lem:#1}}
\newcommand{\fref}[1]{Figure \ref{fig:#1}}
\newcommand{\taref}[1]{Table \ref{table:#1}}
\newcommand{\cref}[1]{Conjecture \ref{conjecture:#1}}
\newcommand{\des}{\mathrm{des}}
\newcommand{\asc}{\mathrm{asc}}
\newcommand{\red}{\mathrm{red}}
\newcommand{\sg}{\sigma}
\newcommand{\la}{\lambda}
\newcommand{\ga}{\gamma}
\newcommand{\Sn}[1]{\mathcal{S}_{#1}}
\newcommand{\Dn}{\mathcal{D}_n}
\newcommand{\Snn}{\mathcal{S}_n}
\newcommand{\Snk}{\mathcal{S}_k}
\newcommand{\Des}{\mathrm{Des}}
\newcommand{\filll}[3]{\node at (#1,#2) {$#3$}}
\newcommand{\fillll}[3]{\node at (#1-.5,#2-.5) {$#3$}}
\newcommand{\filllll}[2]{\node at (#1-.5,#2-.5) {$#2$}}
\newcommand{\fillcross}[2]{\draw[thick] (#1-1,#2-1)--(#1,#2);\draw[thick] (#1-1,#2)--(#1,#2-1)}
\newcommand{\fillgcross}[2]{\draw[thick,green!50!black] (#1-1,#2-1)--(#1,#2);\draw[thick,green!50!black] (#1-1,#2)--(#1,#2-1)}
\newcommand{\thn}[1]{$#1^\textnormal{th}$}
\newcommand{\BBxt}[2]{{B}_{#1}^{\underline{#2}}}
\newcommand{\AAA}[3]{{A}_{#1}^{#2}(#3)}
\newcommand{\AAxyt}[2]{{A}_{#1}^{\underline{#2}}(t,y,x)}
\newcommand{\AAAover}[2]{{A}_{#1}^{\underline{#2}}}
\title{Counting Consecutive Pattern Matches in $\mathcal{S}_n(132)$ and $\mathcal{S}_n(123)$}
\author{
Ran Pan \\
\small Department of Mathematics\\[-0.8ex]
\small University of California, San Diego\\[-0.8ex]
\small La Jolla, CA 92093-0112. USA\\[-0.8ex]
\small \texttt{ran.pan.math@gmail.com}
\and
Dun Qiu \\
\small Department of Mathematics\\[-0.8ex]
\small University of California, San Diego\\[-0.8ex]
\small La Jolla, CA 92093-0112. USA\\[-0.8ex]
\small \texttt{duqiu@ucsd.edu}
\and
Jeffrey Remmel \\
\small Department of Mathematics\\[-0.8ex]
\small University of California, San Diego\\[-0.8ex]
\small La Jolla, CA 92093-0112. USA
}
\date{\today}
\begin{document}
\maketitle
\begin{abstract}
\noindent In this paper, we study the distribution of consecutive patterns in the set of 123-avoiding permutations and the set of 132-avoiding permutations, 
that is, in $\mathcal{S}_n(123)$ and $\mathcal{S}_n(132)$. 
We first study the distribution of consecutive pattern $\gamma$-matches in $\mathcal{S}_n(123)$ and $\mathcal{S}_n(132)$ for each length 3 consecutive pattern $\gamma$. Then we extend our methods to study the joint distributions of multiple consecutive patterns. Some more general cases are discussed in this paper as well.

\ \\
\textbf{Keywords:} permutations, classical patterns, consecutive patterns, Catalan numbers, Dyck paths
\end{abstract}

\section{Introduction}

Let $\Snn$ denote the set of the permutations of length $n$. In this paper, we shall use one-line notation for permutations. For example, $\sg=145623\in \Sn6$ is a permutation of length $6$.

For a permutation $\sg = \sg_1\ldots \sg_n\in \Snn$, we let $\Des(\sg) :=
\{i: \sg_i>\sg_{i+1}\}$ be the set of \emph{descent positions}. We say that $i$ is a \emph{descent} of $\sg$ if $i\in \Des(\sg)$, and we let $\des(\sg) := |\Des(\sg)|$ be the number of {descents}. 
Similarly, we say that $i\in\{1,\ldots,n-1\}$ is an \emph{ascent} of $\sg$ if $i\notin \Des(\sg)$, and we let $\text{asc}(\sg)$  denote the number of ascents of $\sg$. Clearly, $\asc(\sg)+\des(\sg)=n-1$.

Given a sequence of $n$ distinct positive integers $w = w_1 \cdots w_n$, we let $\red(w)$ denote the permutation in $\Snn$ obtained from $w$ by replacing the \thn{i} smallest letter in $w$ by $i$. For example,
$\red(51)=21$ and $\red(2638)=1324.$

Given a permutation $\sg=\sg_1\ldots\sg_n \in \Snn$,
we say that a permutation $\la=\la_1\cdots\la_k \in \Snk$ \emph{occurs}
in $\sg$
if and only if there exist integers $1\leq i_1<i_2<\ldots<i_k\leq n$ such that
\begin{equation*}
\red(\sg_{i_1}\sg_{i_2}\ldots\sg_{i_k})=\la,
\end{equation*}
and $\sg$ \emph{avoids} $\la$ if $\la$ does not occur in $\sigma$. In the theory of permutation patterns, such $\la$ is called a \emph{classical
pattern}. We let $\Snn(\la)$ denote the set of permutations in $\Snn$ avoiding $\la$.

Similarly, 
we say that a permutation $\ga=\ga_1\cdots\ga_k \in \Snk$ \emph{occurs consecutively}
in $\sg\in\Snn$
if and only if there exists an integer $i$ such that
\begin{equation*}
\red(\sg_{i}\sg_{i+1}\ldots\sg_{i+k-1})=\ga,
\end{equation*}
in which case we say that $\sg$ has a consecutive $\ga$-match at the \thn{i} position of $\sg$, and $\sg$ \emph{avoids} $\ga$ \emph{consecutively} if $\ga$ does not occur consecutively in $\sigma$. Such $\ga$ is called a \emph{consecutive pattern}. We let $\ga\text{-mch}(\sigma)$ denote the number of consecutive $\ga$-matches in $\sigma$. Clearly, a descent is equivalent to a consecutive $21$-match. 

In short, a consecutive pattern requires the indices of  the subsequence in the permutation to be adjacent, while a classical pattern does not have this restriction.  
It is a fact that $\la\text{-mch}(\sigma)=0$ if $\sg\in\Snn(\la)$, while the converse is not always true which can be seen from the following example: let $\la = 132\in\Sn{3}$ and $\sigma = 23541\in\Sn{5}$, then $\la\text{-mch}(\sigma)=0$, but $\sg\notin\Snn(\la)$ since $\red(\sg_1\sg_3\sg_4)=\red(254)=\la$.

Both classical permutation patterns and consecutive permutation patterns are popular topics in enumerative combinatorics. One well-known result is that for any $\la \in \Sn3$, the number of permutations in $\Snn$ avoiding the classical pattern $\la$ is counted by $n^{\text{th}}$ Catalan number, that is,
\begin{equation*}
|\Sn{n}(\la)|=C_n=\frac{1}{n+1}\binom{2n}{n}, ~~\forall \la \in \Sn3.
\end{equation*}
More generally, combinatorists are interested in finding (exponential) generating functions such as
\begin{equation*}
F_{\la}{(t)}:=\sum_{n\geq 0}|\Sn{n}(\la)|t^n
\end{equation*}
and
\begin{equation*}
G_{\la}{(t,x)}:=\sum_{n\geq 0}\frac{t^n}{n!}\sum_{\alpha \in \Snn}x^{\la\text{-mch}(\alpha)}.
\end{equation*}
For two permutation patterns $\sg$ and $\la$, we say $\sg$ and $\la$ are \emph{Wilf equivalent} if
\begin{equation*}
F_{\sg}{(t)}\equiv F_{\la}{(t)}.
\end{equation*}
Similarly, we say $\sg$ and $\la$ are \emph{c-Wilf equivalent} if
\begin{equation*}
G_{\sg}{(t, 0)}\equiv G_{\la}{(t, 0)},
\end{equation*}
where `c' stands for `consecutive'. We say $\sg$ and $\la$ are \emph{strongly c-Wilf equivalent} if 
\begin{equation*}
G_{\sg}{(t, x)}\equiv G_{\la}{(t, x)}.
\end{equation*}

Although both classical patterns and consecutive patterns have been studied separately for a long time (e.g. \cite{Eli,Kit}), there is not much research about the enumeration of consecutive pattern matches in $\Snn(\la)$. Very recently, Bukata, Kulwicki, Lewandowski, Pudwell, Roth and Wheeland studied the distribution of valleys and peaks in $\Snn(123)$ in \cite{BKLPRW}. In this paper, we shall study a more general situation. Our focus is to enumerate  consecutive pattern matches in $\Snn(\la)$ for all $\la\in \Sn{3}$ and keep tracking the number of descents meanwhile. In other words, we want to study the following generating function:
\begin{equation}\label{eqn: single_gf}
\AAAover{\la}{\gamma}(t,y,x):=\sum_{n\geq 0}t^n\sum_{\sg \in \Snn(\la)}y^{\des(\sg)}x^{\gamma\text{-mch}(\sg)}.
\end{equation}
where $\lambda \in \Sn3$. Furthermore, we can also keep track of multiple consecutive patterns in $\Sn{\la}$ and define the following generating function,
\begin{equation}
\AAAover{\lambda}{\Gamma}{(t,y,x_1,\ldots,x_s)}:=\sum_{n\geq 0}t^n\sum_{\sg\in\Snn(\lambda)}y^{\des(\sg)}\prod_{i=1}^s x_i^{\ga_i\text{-mch}(\sg)},
\end{equation}
where $\Gamma=\{\gamma_1,\ldots,\gamma_s\}$ is a set of consecutive patterns.

As mentioned above, for any $\la \in \Sn3$, $|\Snn(\la)|$ is counted by the Catalan number $C_n$ which also counts the number of $(n,n)$-Dyck paths. We will mainly use bijections between $\Snn(\la)$ and $(n,n)$-Dyck paths to find formulas for the functions $A_\la^{\underline{\Gamma}}{(t,y,x_1,\ldots,x_s)}$ and $\AAAover{\la}{\gamma}(t,y,x)$. 

The outline of this paper is as follows. In Section 2, we will study some symmetries about the function $\AAAover{\la}{\gamma}(t,y,x)$ in terms of $\gamma$. We will also review the bijection between $\Snn(132)$ and $(n,n)$-Dyck paths introduced by Krattenthaler in \cite{Kr}, the bijection  between $\Snn(123)$ and $(n,n)$-Dyck paths introduced by  Deutsch and Elizalde in \cite{DE}, and three  recursions for the generating function of Dyck paths. In Section 3, we shall compute generating functions in the form of $\AAAover{\la}{\ga}{(t,y,x)}$ for $\la, \ga \in \Sn{3}$. In Section 4, we shall study the generating functions of $\Snn(123)$ and $\Snn(132)$ tracking multiple consecutive patterns. In particular, we compute $\AAA{123}{\underline{\{132,231,321\}}}{t,y,x_1,x_2,x_3}$
and
$\AAA{132}{\underline{\{123,213,231,321\}}}{t,y,x_1,x_2,x_3,x_4}$.
In Section 5 and Section 6, we will study generating functions of the form
\begin{equation}
\BBxt{\la}{\ga}(t,x):=\sum_{n\geq 0}t^n\sum_{\sg\in\Snn(\lambda)}x^{\ga\text{-mch}(\sg)},
\end{equation}
where $\la=123$ or $132$, and $\ga$ is of some special shapes.

\section{Preliminaries}
\subsection{Symmetries about the function $\AAAover{\la}{\gamma}(t,y,x)$}
We shall begin with studying some symmetries about consecutive pattern distributions in $\Snn(132)$ and $\Snn(123)$.

Given a permutation $\sg = \sg_1 \sg_2 \ldots \sg_n \in \Snn$, we let $\sg^r$ be the \emph{reverse} of $\sg$ defined by $\sg^r = \sg_n \ldots \sg_2 \sg_1$, and $\sg^c$ be the \emph{complement} of $\sg$ defined by $\sg^c = (n+1 -\sg_1) (n+1-\sg_2) \ldots (n+1 -\sg_n)$. Further, we let $\sg^{rc}=(\sg^r)^c$ be the \emph{reverse-complement} of $\sg$. For example, for $\sg=15324$, we have $\sg^r=42351,\ \sg^c=51342,\mbox{ and } \sg^{rc}=24315$.

The three actions give several symmetries about consecutive pattern distributions in $\Snn(\la)$, and we have the following lemma:
\begin{lem}\label{lem:rcrc}
For any permutations $\la$ and $\ga$, we have
\begin{eqnarray}
\label{rc}
\AAAover{\la^{rc}}{\gamma^{rc}}(t,y,x) &=& \AAAover{\la}{\gamma}(t,y,x), \\
\label{r}
\AAAover{\la^r}{\gamma^r}(t,y,x) &=& 1+ \frac{\AAAover{\la}{\gamma}(ty,\frac{1}{y},x)-1}{y}, \mbox{ and}\\
\label{c}
\AAAover{\la^c}{\gamma^c}(t,y,x) &=& 1+ \frac{\AAAover{\la}{\gamma}(ty,\frac{1}{y},x)-1}{y}.
\end{eqnarray}
\end{lem}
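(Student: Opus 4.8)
The plan is to track, under each of the three involutions $\sg \mapsto \sg^{rc}$, $\sg \mapsto \sg^r$, $\sg \mapsto \sg^c$, how the statistics $\des$ and $\ga\text{-mch}$ transform, and how the avoidance class $\Snn(\la)$ transforms, then read off the generating-function identities. The three key structural observations I would establish first are: (i) each of $r$, $c$, $rc$ is a bijection of $\Snn$ onto itself; (ii) $\sg$ avoids $\la$ (classically) if and only if $\sg^{rc}$ avoids $\la^{rc}$, $\sg^{r}$ avoids $\la^{r}$, and $\sg^{c}$ avoids $\la^{c}$ — this is the standard fact that the dihedral group acts on pattern classes; and (iii) the same holds for \emph{consecutive} occurrences, since reversing or complementing a permutation reverses/complements every window $\sg_i \cdots \sg_{i+k-1}$ simultaneously with the pattern, so $\ga\text{-mch}(\sg) = \ga^{rc}\text{-mch}(\sg^{rc}) = \ga^{r}\text{-mch}(\sg^{r}) = \ga^{c}\text{-mch}(\sg^{c})$. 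I would phrase (iii) cleanly by noting $\red$ commutes appropriately with taking reverses and complements of a fixed-length window.

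For \eqref{rc}: since $\sg \mapsto \sg^{rc}$ is a bijection from $\Snn(\la)$ onto $\Snn(\la^{rc})$ and $\des(\sg^{rc}) = \des(\sg)$ (a position $i$ is a descent of $\sg$ iff $n-i$ is a descent of $\sg^{rc}$, because reverse-complement swaps the two letters of each adjacent pair without otherwise changing their order-type — more carefully, $\sg^{rc}$ is the permutation whose adjacency pattern at position $i$ is the reverse-complement of $\sg$'s at position $n-i$, and reverse-complement of a $2$-pattern is the identity on $\{12,21\}$), together with $\ga\text{-mch}(\sg^{rc}) = \ga^{rc}\text{-mch}(\sg^{rc})\cdot$ — rather, $\ga^{rc}\text{-mch}(\sg^{rc}) = \ga\text{-mch}(\sg)$, the summand for $\sg$ in $\AAAover{\la}{\ga}$ equals the summand for $\sg^{rc}$ in $\AAAover{\la^{rc}}{\ga^{rc}}$, and summing over all $\sg$ gives the claimed equality term by term.

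For \eqref{r} and \eqref{c}: here the subtlety is that $\sg \mapsto \sg^r$ (resp. $\sg^c$) \emph{exchanges} ascents and descents, so $\des(\sg^r) = \asc(\sg) = (n-1) - \des(\sg)$, while still preserving consecutive $\ga$-matches in the sense $\ga^r\text{-mch}(\sg^r) = \ga\text{-mch}(\sg)$ and mapping $\Snn(\la)$ bijectively onto $\Snn(\la^r)$. Hence
\begin{equation*}
\sum_{\sg \in \Snn(\la^r)} y^{\des(\sg)} x^{\ga^r\text{-mch}(\sg)} = \sum_{\sg \in \Snn(\la)} y^{(n-1)-\des(\sg)} x^{\ga\text{-mch}(\sg)} = y^{n-1}\sum_{\sg \in \Snn(\la)} \left(\tfrac{1}{y}\right)^{\des(\sg)} x^{\ga\text{-mch}(\sg)}
\end{equation*}
for each $n \geq 1$. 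Multiplying by $t^n$ and summing, the factor $y^{n-1}t^n = \tfrac{1}{y}(ty)^n$ converts $\AAAover{\la}{\ga}(t,y,x)$ evaluated at $(ty, \tfrac1y, x)$ into the $n\ge 1$ part of $\AAAover{\la^r}{\ga^r}(t,y,x)$; the $n=0$ term contributes $1$ on both sides and must be added back by hand, which produces exactly the $1 + \tfrac{(\cdots) - 1}{y}$ shape in \eqref{r}. The argument for \eqref{c} is identical with $r$ replaced by $c$ throughout (complement also swaps ascents and descents and commutes with consecutive matching). The main obstacle, such as it is, is purely bookkeeping: being careful that the $n=0$ (empty permutation) and, if one wishes, $n=1$ terms are handled correctly in the normalization, and stating the index-reversal for descents precisely enough that the reader believes $\des(\sg^r) = \asc(\sg)$; everything else is a direct change of summation variable along an explicit bijection.
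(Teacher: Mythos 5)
Your proposal is correct and follows essentially the same route as the paper: a term-by-term bijective correspondence under each of $r$, $c$, $rc$, using that consecutive $\ga$-matches are carried to consecutive $\ga^r$-, $\ga^c$-, $\ga^{rc}$-matches, that descents are preserved by $rc$ but exchanged with ascents by $r$ and $c$, and then the substitution $y^{n-1}t^n = \frac{1}{y}(ty)^n$ with the $n=0$ term separated out. Your explicit handling of the empty-permutation term and the index-reversal for descents is exactly the bookkeeping the paper's proof performs implicitly.
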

\begin{proof}
The action reverse-complement sends a permutation $\sg\in\Snn(\la)$ to $\sg^{rc}\in\Snn(\la^{rc})$, while each consecutive $\ga$-match in $\sg$ is sent to a consecutive $\ga^{rc}$-match in $\sg^{rc}$, and each descent in $\sg$ is sent to a descent in $\sg^{rc}$,  thus $\ga^{rc}\text{-mch}(\sg^{rc})=\ga\text{-mch}(\sg)$, $\des(\sg^{rc})=\des(\sg)$, and we have equation (\ref{rc}).

The action reverse sends a permutation $\sg\in\Snn(\la)$ to a permutation $\sg^r\in\Snn(\la^r)$, while each consecutive $\ga$-match in $\sg$ is sent to a consecutive $\ga^r$-match in $\sg^r$, and each descent in $\sg$ is sent to an ascent in $\sg^{r}$,  thus $\ga^{r}\text{-mch}(\sg^{r})=\ga\text{-mch}(\sg)$, $\des(\sg^{r})=\asc(\sg)=n-1-\des(\sg)$. Then
\begin{eqnarray*}
\AAAover{\la^r}{\gamma^r}(t,y,x) &=& 1+\sum_{n\geq 0}t^n\sum_{\sg \in \Snn(\la^r)}y^{\des(\sg)}x^{\gamma^r\text{-mch}(\sg)}\\
&=& 1+\sum_{n\geq 0}t^n\sum_{\sg \in \Snn(\la)}y^{n-1-\des(\sg)}x^{\gamma\text{-mch}(\sg)}\\
&=& 1 + \frac{\sum_{n\geq 0}(ty)^n\sum_{\sg \in \Snn(\la)}y^{-\des(\sg)}x^{\gamma\text{-mch}(\sg)}}{y},
\end{eqnarray*}
which proves equation (\ref{r}). Equation (\ref{c}) follows in a similar way.
\end{proof}
Since $123=321^r$ and $132=231^r=312^c=213^{rc}$, we shall only consider the case when $\la=123$ or $132$ to compute the generating functions of the form $\AAAover{\la}{\gamma}(t,y,x)$ when $\la$ is of length 3.

Note that for any permutation $\sg\in\Snn(\la)$, the number of consecutive $\la$-matches of $\sg$ is always 0, therefore the functions $\AAAover{123}{123}(t,y,x)$ and $\AAAover{132}{132}(t,y,x)$ are trivial.

Since $123^{rc}=123$, we have 
\begin{lem}\label{lem:123eq}
	For any pattern $\ga$,
	\begin{equation}
	\AAAover{123}{\ga}(t,y,x)=\AAAover{123}{\ga^{rc}}(t,y,x).
	\end{equation}
\end{lem}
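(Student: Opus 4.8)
The plan is to obtain Lemma~\ref{lem:123eq} as an immediate specialization of equation~(\ref{rc}) in Lemma~\ref{lem:rcrc}. First I would record the elementary observation that the pattern $123$ is a fixed point of the reverse-complement action: indeed $123^r = 321$ and then $321^c = 123$, so $123^{rc} = 123$ (this is exactly the fact flagged in the sentence preceding the statement).

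Next I would invoke equation~(\ref{rc}) of Lemma~\ref{lem:rcrc}, which asserts $\AAAover{\la^{rc}}{\gamma^{rc}}(t,y,x) = \AAAover{\la}{\gamma}(t,y,x)$ for arbitrary patterns $\la$ and $\ga$. Specializing to $\la = 123$ and letting $\ga$ be an arbitrary pattern, the left-hand side becomes $\AAAover{123^{rc}}{\gamma^{rc}}(t,y,x) = \AAAover{123}{\gamma^{rc}}(t,y,x)$ by the computation above, while the right-hand side is $\AAAover{123}{\gamma}(t,y,x)$. Equating the two gives precisely the claimed identity.

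There is no genuine obstacle here: the statement is a one-line corollary of Lemma~\ref{lem:rcrc}, and the only point needing (trivial) verification is the arithmetic $123^{rc}=123$. The content of the lemma is really its usefulness rather than its difficulty, so I would close by noting the consequence: when computing the distribution of a length-$3$ consecutive pattern $\ga$ in $\Snn(123)$, it suffices to handle one representative from each pair $\{\ga,\ga^{rc}\}$, which together with Lemma~\ref{lem:rcrc} reduces the number of cases one must treat in Section~3.
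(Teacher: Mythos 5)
Your proof is correct and matches the paper's own (implicit) argument exactly: the paper derives the lemma in one line from equation~(\ref{rc}) of Lemma~\ref{lem:rcrc} together with the observation that $123^{rc}=123$. Nothing is missing.
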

Setting $\ga=132$ and $\ga=231$, it follows that
\begin{eqnarray}
\label{sym1}
\AAAover{123}{132}(t,y,x)&=&\AAAover{123}{213}(t,y,x),\\
\label{sym2}
\AAAover{123}{231}(t,y,x)&=&\AAAover{123}{312}(t,y,x).
\end{eqnarray}

On the other hand, we shall use a bijective map $\phi_n:\Snn(312) 
\rightarrow \Snn(213)$ in \cite{QR2} to study symmetries about the function $\AAAover{132}{\ga}(t,y,x)$. 

Given a permutation $\sg=\sg_1\cdots\sg_n\in\Snn(312)$, the permutation $\phi_n(\sg)$ is defined recursively as follows: if $n=1$, then $\phi_n(\sg)=\sg$. 
If $n>1$, then we suppose that $\sg_r=1$. Since $\sg$ is 312-avoiding, it must be the case that all the numbers $\sg_{r+1},\ldots,\sg_n$ are bigger than all the numbers $\sg_1,\ldots,\sg_{r-1}$. Thus, $\sg_1\cdots\sg_{r-1}$ is a 312-avoiding permutation of the set $\{2,\ldots,r\}$ and $\sg_{r+1}\cdots\sg_n$ is a 312-avoiding permutation of the set $\{r+1,\ldots,n\}$. Setting $\overline{\sg}_1\cdots\overline{\sg}_{r-1}=\phi_{r-1}((\sg_1-1)\cdots(\sg_{r-1}-1))$ and $\overline{\sg}_{r+1}\cdots\overline{\sg}_{n}=\phi_{n-r}((\sg_{r+1}-r)\cdots(\sg_{n}-r))$, we have
\begin{equation*}
\phi_n(\sg)=(\overline{\sg}_1+n-r+1)\cdots (\overline{\sg}_{r-1}+n-r+1)\ 1\ (\overline{\sg}_{r+1}+1)\cdots(\overline{\sg}_{n}+1).
\end{equation*}

\fref{phin} shows an example of this map that $\phi_5(32415)=53412$. For any permutation $\sg\in\Snn(312)$, it is proved in (\cite{QR2}, Theorem 1) that $\phi_n$ does not change the descent positions of $\sg$, i.e.\ $\Des(\sg)=\Des(\phi_n(\sg))$. As a consequence, $k\cdots 21\text{-mch}(\sg)=k\cdots 21\text{-mch}(\phi_n(\sg))$  since the consecutive pattern $k\cdots 21$ can be seen as $k-1$ consecutive descents, and we have
\begin{equation}\label{312sym}
\AAAover{312}{k\cdots 21}(t,y,x) = \AAAover{213}{k\cdots 21}(t,y,x).
\end{equation}

\begin{figure}[ht]
	\centering
	\vspace{-1mm}
	\begin{tikzpicture}[scale =.5]
	\draw[help lines] (0,0) grid (5,5);
	\draw[very thick] (0,1) rectangle (3,4);
	\draw[very thick] (4,4) rectangle (5,5);
	\fillll{1}{3}{3};\fillll{2}{2}{2};
	\fillll{3}{4}{4};\fillll{4}{1}{1};
	\fillll{5}{5}{5};
	\draw[thick] (3.5,.5) circle [radius=0.5];
	\end{tikzpicture}
	\begin{tikzpicture}[scale =.5]
	\draw[help lines] (0,0) grid (5,5);
	\draw[very thick] (0,2) rectangle (3,5);
	\draw[very thick] (0,3) rectangle (1,4);
	\draw[very thick] (2,4) rectangle (3,5);
	\draw[very thick] (4,1) rectangle (5,2);
	\fillll{2}{3}{3};\fillll{5}{2}{2};
	\fillll{1}{4}{4};\fillll{4}{1}{1};
	\fillll{3}{5}{5};
	\draw[thick] (3.5,.5) circle [radius=0.5];
	\draw[thick] (1.5,2.5) circle [radius=0.5];
	\draw (-1,2.5) node {$\Longrightarrow$};
	\end{tikzpicture}	
	\begin{tikzpicture}[scale =.5]
	\draw[very thick] (0,2) rectangle (3,5);
	\draw[very thick] (0,4) rectangle (1,5);
	\draw[very thick] (2,3) rectangle (3,4);
	\draw[very thick] (4,1) rectangle (5,2);
	\draw[help lines] (0,0) grid (5,5);
	\fillll{2}{3}{3};\fillll{5}{2}{2};
	\fillll{3}{4}{4};\fillll{4}{1}{1};
	\fillll{1}{5}{5};
	\draw[thick] (1.5,2.5) circle [radius=0.5];
	\draw (-1,2.5) node {$\Longrightarrow$};
	\end{tikzpicture}
	\caption{$\phi_5(\sg)=53412\in\Sn{5}(213)$ for $\sg=32415\in\Sn{5}(312)$.}
	\label{fig:phin}
\end{figure}
From the recursive definition of the bijection $\phi_n$, the number of consecutive $1k\cdots32$-matches is not changed in each recursive step, and we have
\begin{equation}\label{312sym2}
\AAAover{312}{1k\cdots32}(t,y,x)=\AAAover{213}{1k\cdots32}(t,y,x),
\end{equation}
which implies the following lemma.
\begin{lem}\label{lem:1321}For any integer $k\geq 2$, we have
	\begin{eqnarray}
	\AAAover{132}{12\cdots k}(t,y,x) &=& 1+\frac{\AAAover{132}{k\cdots 21}(ty,\frac{1}{y},x)-1}{y},\\
	\AAAover{132}{k12\cdots(k-1)}(t,y,x)&=&1+\frac{\AAAover{132}{(k-1)\cdots 21k}(ty,\frac{1}{y},x)-1}{y}.
	\end{eqnarray}
\end{lem}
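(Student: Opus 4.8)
The plan is to derive both identities purely formally, by chaining together the three symmetries of \lref{rcrc} with the two bijective identities (\ref{312sym}) and (\ref{312sym2}). The key observation is that $132$ admits the two presentations $132 = 312^c$ and $132 = 213^{rc}$, so that equations (\ref{c}) and (\ref{rc}) both produce $\AAAover{132}{\cdot}$ on one side, while the bijection $\phi_n$ lets us pass from $\AAAover{312}{\cdot}$ to $\AAAover{213}{\cdot}$ for the two relevant families of patterns.

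For the first identity, I would start from (\ref{c}) applied with $\la = 312$ and $\ga = k\cdots 21$. Since the complement of the decreasing pattern is the increasing pattern, $(k\cdots 21)^c = 12\cdots k$, and (\ref{c}) gives
\begin{equation*}
\AAAover{132}{12\cdots k}(t,y,x) = 1 + \frac{\AAAover{312}{k\cdots 21}(ty,\frac{1}{y},x) - 1}{y}.
\end{equation*}
It then remains to rewrite $\AAAover{312}{k\cdots 21}$ as $\AAAover{132}{k\cdots 21}$. This is done in two steps: (\ref{312sym}) gives $\AAAover{312}{k\cdots 21} = \AAAover{213}{k\cdots 21}$, and then (\ref{rc}) applied with $\la = 213$ and $\ga = k\cdots 21$ — using $213^{rc} = 132$ and the fact that the decreasing pattern is fixed under reverse-complement, $(k\cdots 21)^{rc} = k\cdots 21$ — gives $\AAAover{213}{k\cdots 21} = \AAAover{132}{k\cdots 21}$. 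Substituting (as formal power series in all three variables, hence also after the substitution $t\mapsto ty$, $y\mapsto 1/y$) yields the first displayed equation of the lemma.

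For the second identity the same scheme applies to a different pattern family. A short computation shows $(k12\cdots(k-1))^c = 1k(k-1)\cdots 2 = 1k\cdots 32$, so (\ref{c}) with $\la = 312$ and $\ga = 1k\cdots 32$ gives
\begin{equation*}
\AAAover{132}{k12\cdots(k-1)}(t,y,x) = 1 + \frac{\AAAover{312}{1k\cdots 32}(ty,\frac{1}{y},x) - 1}{y}.
\end{equation*}
Now (\ref{312sym2}) replaces $\AAAover{312}{1k\cdots 32}$ by $\AAAover{213}{1k\cdots 32}$, and (\ref{rc}) with $\la = 213$, $\ga = 1k\cdots 32$ — together with the computation $(1k\cdots 32)^{rc} = (k-1)\cdots 21k$ (check: for $k=3$ this reads $132 \mapsto 213$) — turns this into $\AAAover{132}{(k-1)\cdots 21k}$, which gives the second equation of the lemma.

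The only place where care is needed is in the elementary bookkeeping of how the patterns $k\cdots 21$, $12\cdots k$, $k12\cdots(k-1)$, $1k\cdots 32$ and $(k-1)\cdots 21k$ transform under $r$, $c$ and $rc$, and in checking that each invocation of \lref{rcrc}, (\ref{312sym}) and (\ref{312sym2}) is applied to a pattern of exactly the shape those results require (the decreasing pattern for (\ref{312sym}), the pattern $1k\cdots 32$ for (\ref{312sym2})). There is no substantive obstacle beyond this: the whole argument is a formal manipulation of the five identities already established.
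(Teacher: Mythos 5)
Your proposal is correct and follows essentially the same route as the paper: apply equation (\ref{c}) with $\la=312$ (using $312^c=132$) to the patterns $k\cdots 21$ and $1k\cdots 32$, pass from $\AAAover{312}{\cdot}$ to $\AAAover{213}{\cdot}$ via (\ref{312sym}) and (\ref{312sym2}), and return to $\AAAover{132}{\cdot}$ via (\ref{rc}) with $213^{rc}=132$. Your pattern bookkeeping ($(k12\cdots(k-1))^c = 1k\cdots 32$, $(1k\cdots 32)^{rc}=(k-1)\cdots 21k$, etc.) checks out, so nothing further is needed.
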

\begin{proof}
Since $312=132^c$ and $213=132^{rc}$, it follows from \lref{rcrc} and equations (\ref{312sym}) and (\ref{312sym2}) that 
\begin{eqnarray*}
\AAAover{132}{12\cdots k}(t,y,x) &=& 1+ \frac{\AAAover{312}{k\cdots 2 1}(ty,\frac{1}{y},x)-1}{y} = 1+ \frac{\AAAover{213}{k\cdots 2 1}(ty,\frac{1}{y},x)-1}{y}=1+\frac{\AAAover{132}{k\cdots  21}(ty,\frac{1}{y},x)-1}{y},\\
\AAAover{132}{k12\cdots(k-1)}(t,y,x) &=& 1+ \frac{\AAAover{312}{1k\cdots32}(ty,\frac{1}{y},x)-1}{y} = 1+ \frac{\AAAover{213}{1k\cdots32}(ty,\frac{1}{y},x)-1}{y} \\&=& 1+\frac{\AAAover{132}{(k-1)\cdots 21k}(ty,\frac{1}{y},x)-1}{y}.\qedhere
\end{eqnarray*}
\end{proof}
When $k=3$, we have
\begin{eqnarray}
\label{sym3}
\AAAover{132}{123}(t,y,x)&=&1+\frac{\AAAover{132}{321}(ty,\frac{1}{y},x)-1}{y},\\
\label{sym4}
\AAAover{132}{312}(t,y,x)&=&1+\frac{\AAAover{132}{213}(ty,\frac{1}{y},x)-1}{y}.
\end{eqnarray}
Setting $y=1$ in the equations above, we have
$\AAAover{132}{123}(t,1,x)=\AAAover{132}{321}(t,1,x)$ and  
$\AAAover{132}{312}(t,1,x)=\AAAover{132}{213}(t,1,x).$

According to the symmetries given in equations (\ref{sym1}), (\ref{sym2}), (\ref{sym3}) and (\ref{sym4}), to compute $\AAAover{\la}{\ga}(t,y,x)$ for all $\la, \ga \in\Sn3$, it suffices to compute the following six generating functions:
$\AAAover{123}{132}(t,y,x)$,
$\AAAover{123}{231}(t,y,x)$ and
$\AAAover{123}{321}(t,y,x)$ for $\Snn(123)$;
$\AAAover{132}{123}(t,y,x)$,
$\AAAover{132}{213}(t,y,x)$ and
$\AAAover{132}{231}(t,y,x)$ for $\Snn(132)$.

\subsection{Bijections from $\Snn(132)$ and $\Snn(123)$ to $(n,n)$-Dyck paths}

Given an $n\times n$ square, we will label the coordinates of the columns from left to right
and the coordinates of the rows from top to bottom with $0,1, \ldots, n$.  An $(n,n)$-\emph{Dyck path} is a path consisting of unit down-steps $D$ and unit right-steps $R$ which starts at $(0,0)$ and ends at $(n,n)$ and stays on or below the diagonal $y=x$ (these are ``down-right" Dyck paths). The set of $(n,n)$-Dyck paths is denoted by $\mathcal{D}_n$.

Given a Dyck path $P$, 
we let the first return of $P$, denoted by $\mathrm{ret}(P)$, be the smallest number $i>0$ such that $P$ goes through the point $(i,i)$.  For example, for 
$
P =DDRDDRRRDDRDRDRRDR
$
shown in  \fref{Dpath}, 
$\mathrm{ret}(P) =4$ since the leftmost point on the diagonal after $(0,0)$ that $P$ goes through is $(4,4)$. 
We refer to positions $(i,i)$ where $P$ goes through as \emph{return positions} of $P$. We shall also label the diagonals that go through corners of squares 
that are parallel to and below the main diagonal with $0, 1, 2, \ldots $ starting at the main diagonal, as shown in \fref{Dpath}. The \emph{peaks} of a path $P$ are the positions of consecutive $DR$ steps. 

\begin{figure}[ht]
	\centering
	\vspace{-3mm}	
	\begin{tikzpicture}[scale =.4]
	\draw[blue] (0,9)--(9,0);
	\node[] at (-1,9.5) {$0^{\textnormal{th}}$ diagonal};
	\draw[blue] (0,8)--(8,0);
	\node[] at (-2.5,8.2) {$1^{\textnormal{st}}$ diagonal};
	\draw[blue] (0,7)--(7,0);
	\node[] at (-2.7,7) {$2^{\textnormal{nd}}$ diagonal};
	\path (-4,4.5);
	\draw[ultra thick] (0,9)--(0,7) -- (1,7)--(1,5)-- (4,5)--(4,3)--(5,3)--(5,2)--(6,2)--(6,1)--(8,1)--(8,0)--(9,0);
	\draw[help lines] (0,0) grid (9,9);	
	\draw[very thick] (3.5,4.5)--(4.5,5.5) node[right] {ret$=4$};
	\filll{0}{-.5}{0};\filll{1}{-.5}{1};\filll{2}{-.5}{2};\filll{3}{-.5}{3};\filll{4}{-.5}{4};
	\filll{6.5}{-.5}{\cdots};
	\filll{9}{-.5}{9};
	\filll{9.5}{0}{9};\filll{9.5}{9}{0};\filll{9.5}{8}{1};\filll{9.5}{7}{2};\filll{9.5}{6}{3};
	\filll{9.5}{5}{4};
	\filll{9.5}{2.5}{\vdots};
	\end{tikzpicture}
	\caption{an $(n,n)$-Dyck path.}
	\label{fig:Dpath}
	\vspace{-1mm}
\end{figure}

As we have mentioned, 
$|\Snn(132)|=|\Snn(123)|=|\mathcal{D}_n|=C_n$, where $C_n = \frac{1}{n+1}\binom{2n}{n}$ is the 
$n^{\mathrm{th}}$ Catalan number. Many bijections are known between these Catalan objects (see \cite{Stan}), and we shall apply the bijection of Krattenthaler \cite{Kr} between $\Snn(132)$ and $\mathcal{D}_n$ and the bijection of Deutsch and Elizalde \cite{DE} between $\Snn(123)$ and $\mathcal{D}_n$ in our computation of generating functions. The second and the last author of this paper also discussed the two bijections in \cite{QR1,QR2} with more details.

We shall first describe the bijection $\Phi$ of Krattenthaler \cite{Kr} between $\Snn(132)$ and $\mathcal{D}_n$. 
Given any permutation $\sg = \sg_1 \cdots \sg_n \in\Snn(132)$, we write it on an $n\times n$ table 
by placing $\sg_i$ in the $i^{\mathrm{th}}$ column counting from left to right and $\sg_i^{\mathrm{th}}$ row counting from bottom to top. Then for each $i$, we 
shade the cells to the north-east of the cell that contains $\sg_i$. $\Phi(\sg)$ 
is the path that goes along the south-west boundary of the shaded cells. For example, this 
process is pictured in \fref{SnDn1} in the case where  $\sg=867943251\in\Sn{9}(132)$. 
In this case, $\Phi(\sg)=  DDRDDRRRDDRDRDRRDR$.

\begin{figure}[ht]
	\centering
	\subfigure[the map $\Phi:\Snn(132)\rightarrow\mathcal{D}_n$.\label{fig:SnDn1}]{\begin{tikzpicture}[scale =.35]
		\path[fill,black!15!white] (0,7) -- (1,7)--(1,5)-- (4,5)--(4,3)--(5,3)--(5,2)--(6,2)--(6,1)--(8,1)--(8,0)--(9,0)--(9,9)--(0,9);
		\draw[help lines] (0,0) grid (9,9);
		\filllll{1}{8};\filllll{2}{6};
		\filllll{3}{7};\filllll{4}{9};
		\filllll{5}{4};\filllll{6}{3};
		\filllll{7}{2};\filllll{8}{5};
		\filllll{9}{1};
		\path (0,-.5);
		\end{tikzpicture}	
		\begin{tikzpicture}[scale =.35]
		\draw[help lines] (0,0) grid (9,9);
		\fillgcross{1}{8};\fillgcross{2}{6};
		\fillcross{3}{7};\fillcross{4}{9};
		\fillgcross{5}{4};\fillgcross{6}{3};
		\fillgcross{7}{2};\fillcross{8}{5};
		\fillgcross{9}{1};	
		\draw (-1.5,4.5) node {$\rightarrow$};
		\path (-2.5,4.5);
		\path (0,-.5);
		\draw[very thick] (0,9)--(0,7) -- (1,7)--(1,5)-- (4,5)--(4,3)--(5,3)--(5,2)--(6,2)--(6,1)--(8,1)--(8,0)--(9,0);
		\path (10,0);
		\end{tikzpicture}}\hspace*{5mm}
	\subfigure[the map $\Psi:\Snn(123)\rightarrow\mathcal{D}_n$.\label{fig:SnDn2}]{\begin{tikzpicture}[scale =.35]
		\path[fill,black!15!white] (0,7) -- (1,7)--(1,5)-- (4,5)--(4,3)--(5,3)--(5,2)--(6,2)--(6,1)--(8,1)--(8,0)--(9,0)--(9,9)--(0,9);
		\draw[help lines] (0,0) grid (9,9);
		\filllll{1}{8};\filllll{2}{6};
		\filllll{3}{9};\filllll{4}{7};
		\filllll{5}{4};\filllll{6}{3};
		\filllll{7}{2};\filllll{8}{5};
		\filllll{9}{1};
		\path (0,-.5);
		\end{tikzpicture}	
		\begin{tikzpicture}[scale =.35]
		\draw[help lines] (0,0) grid (9,9);
		\fillgcross{1}{8};\fillgcross{2}{6};
		\fillcross{3}{9};\fillcross{4}{7};
		\fillgcross{5}{4};\fillgcross{6}{3};
		\fillgcross{7}{2};\fillcross{8}{5};
		\fillgcross{9}{1};	
		\draw (-1.5,4.5) node {$\rightarrow$};
		\path (-2.5,4.5);
		\draw[very thick] (0,9)--(0,7) -- (1,7)--(1,5)-- (4,5)--(4,3)--(5,3)--(5,2)--(6,2)--(6,1)--(8,1)--(8,0)--(9,0);
		\path (0,-.5);
		\end{tikzpicture}}
	\vspace{-1mm}
	\caption{the maps $\Phi$ and $\Psi$.}
	\label{fig:132Dn}
	\vspace{-5mm}
\end{figure}

Given any permutation $\sg = \sg_1 \ldots \sg_n$, we say that $\sg_j$ is a \emph{left-to-right minimum} of $\sg$ if 
$\sg_i > \sg_j$ for all $i < j$. It is easy to see that the left-to-right minima 
of $\sg$ correspond to the \emph{peaks} of the path $\Phi(\sg)$. 
For example, for the permutation $\sg$ pictured in 
\fref{SnDn1}, there are $6$ left-to-right 
minima,  $\{8,6,4,3,2,1\}$.

The \emph{horizontal segments} (or \emph{segments}) of the path $\Phi(\sg)$ are the maximal consecutive 
sequences of $R$ steps in $\Phi(\sg)$. For example, in \fref{SnDn1}, the lengths of 
the horizontal segments, reading from top to bottom, are $1,3,1,1,2,1$, and $\{6,7,9\}$ is the set of numbers associated to the second horizontal segment of $\Phi(\sg)$. 

The map $\Phi$ is invertible since for each Dyck path $P$, the peaks of $P$ give the left-to-right minima of the $132$-avoiding permutation, and the remaining numbers are uniquely determined by the left-to-right minima. More details about  $\Phi$ can be found in \cite{Kr}. We have the following properties for $\Phi$ from \cite{QR1}.

\begin{lem}[\cite{QR1}, Lemma 3]\label{lem:p1}
	Let $P =\Phi(\sg)$ where $\sg\in\Snn(132)$. Then for each horizontal segment $H$ of $P$, the numbers associated 
	to $H$ form a consecutive increasing sequence in $\sg$, and the least 
	number of the sequence sits immediately above the first right-step of $H$. Hence 
	the descents in $\sg$ only occur between two different horizontal segments of $P$.  The number $n$ is in the column of last right-step before the first return. 
\end{lem}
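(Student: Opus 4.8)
The plan is to unpack the definition of Krattenthaler's map $\Phi$ and read off each claim directly from the geometry of the shaded region. Recall that $\sg\in\Snn(132)$ is placed on the $n\times n$ table with $\sg_i$ in column $i$ and row $\sg_i$, and $\Phi(\sg)$ traces the south-west boundary of the union of the north-east quadrants of all the placed points. The left-to-right minima of $\sg$ are exactly the points that have no placed point weakly to their north-west, which is precisely the condition for a cell to be a peak (a $DR$ corner) of $\Phi(\sg)$; this identification, already recalled in the excerpt, is what I would invoke to set up coordinates for the horizontal segments.

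First I would fix a horizontal segment $H$ of $P=\Phi(\sg)$ and let $\sg_j$ be the left-to-right minimum sitting at the peak immediately preceding $H$ (the least number associated to $H$), so $\sg_j$ is immediately above the first right-step of $H$. I would then argue that if $H$ has length $\ell$ and occupies columns $j,j+1,\dots,j+\ell-1$ in the sense that its right-steps lie under those columns, then $\sg_j<\sg_{j+1}<\cdots<\sg_{j+\ell-1}$ and these are consecutive positions in $\sg$: the key point is that a right-step of $P$ under column $i$ with no down-step between it and the next peak forces $\sg_i$ to lie in the single row that is the "floor" of that part of the shaded staircase, and the $132$-avoidance together with the staircase shape pins $\sg_i$ to be the smallest value not yet used that can occupy that cell without creating a descent inside $H$ — equivalently, the entries over one horizontal segment are forced to increase. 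Once that is established, the statement that descents of $\sg$ occur only between two different horizontal segments is immediate, since within a segment the entries strictly increase, so a descent $\sg_i>\sg_{i+1}$ can only happen when columns $i$ and $i+1$ straddle a peak, i.e.\ belong to different segments.

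For the last sentence, that $n$ sits in the column of the last right-step before the first return, I would use that the first return $\mathrm{ret}(P)=r$ means $P$ reaches $(r,r)$ for the first time, so the path from $(0,0)$ to $(r,r)$ stays strictly below the diagonal except at its endpoints; geometrically the shaded region restricted to the first $r$ columns must contain the entire top portion, which forces the value $n$ (the top row) to be covered by a north-east quadrant emanating from some point in columns $1,\dots,r$, and minimality of $r$ plus the fact that the path touches the diagonal at $(r,r)$ forces that point to be in column $r$, which is exactly the column of the last right-step before the first return. Alternatively, and perhaps more cleanly, I would note that $\sg_1\cdots\sg_r$ is a permutation of $\{n-r+1,\dots,n\}$ (a standard consequence of $132$-avoidance and the first-return structure), and that $n$ being the largest value must sit at the end of its segment, which is column $r$.

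The main obstacle I anticipate is the bookkeeping in the first part: making precise the claim that the values over a single horizontal segment are \emph{exactly} a block of consecutive positions whose entries increase, and that the least of them sits above the first right-step. This requires carefully translating "the cell lies on the staircase boundary" into a statement about which rows are available, and then using $132$-avoidance to rule out the alternative that a later entry in the segment dips below. I expect this to be a short induction on the number of horizontal segments (peeling off the topmost segment and the block of columns it governs, then applying the inductive hypothesis to the remaining sub-staircase, which itself corresponds to a $132$-avoiding permutation of the leftover values), with the base case of a single segment being the identity permutation mapped to the all-$R$-then-all-$D$ boundary. Everything else is a direct reading-off from the picture.
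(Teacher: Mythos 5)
The paper never proves this lemma---it is imported verbatim from [QR1, Lemma~3]---so your proposal has to stand on its own. Your overall reading of $\Phi$ is the right one: the $R$-step of $\Phi(\sg)$ under column $i$ sits at depth $n+1-\min(\sg_1,\ldots,\sg_i)$, so the horizontal segments are exactly the maximal blocks of columns sharing the same running minimum, each headed by a left-to-right minimum sitting on the segment's floor. But the mechanism you give for why the entries over one segment increase is false as stated: it is not true that every $\sg_i$ over a segment ``lies in the single row that is the floor of that part of the staircase.'' In the paper's example $\sg=867943251$ the second segment (columns $2,3,4$) has floor row $6$, while $\sg_3=7$ and $\sg_4=9$ sit strictly above it; only the segment's \emph{first} entry occupies the floor cell. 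The correct argument, which you gesture at via $132$-avoidance but never actually give, is one line and needs no induction and no appeal to the inverse construction: if the segment occupies columns $j,\ldots,j+\ell-1$, then $\sg_j$ is a left-to-right minimum, so $\sg_j<\sg_{j+a}$ for all $1\le a\le\ell-1$; and if $\sg_{j+a}>\sg_{j+b}$ for some $1\le a<b\le\ell-1$, then $\red(\sg_j\,\sg_{j+a}\,\sg_{j+b})=132$, contradicting $\sg\in\Snn(132)$. The statement about descents then follows exactly as you say.

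The genuine gap is in the last sentence. You correctly obtain $\{\sg_1,\ldots,\sg_r\}=\{n-r+1,\ldots,n\}$ for $r=\mathrm{ret}(\Phi(\sg))$, and that $n$ must end whatever segment contains it (being the maximum of an increasing block), but you then assert ``which is column $r$'' without ruling out that $n$ ends an \emph{earlier} segment inside columns $1,\ldots,r$. Your geometric alternative does not close this either: every cell of the top row is already shaded by the point in column $1$ alone, so nothing in that argument locates $n$. The missing observation is that the position of $n$ is itself always a return position. Indeed, if $\sg_a=n$ and there were $p<a<s$ with $\sg_p<\sg_s$, then $\red(\sg_p\,\sg_a\,\sg_s)=132$; hence every entry before position $a$ exceeds every entry after it, so $\{\sg_1,\ldots,\sg_a\}=\{n-a+1,\ldots,n\}$, the running minimum at column $a$ is $n-a+1$, and the path passes through $(a,a)$. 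Minimality of the first return gives $a\ge r$, while $n\in\{\sg_1,\ldots,\sg_r\}$ gives $a\le r$, so $a=r$ and $n$ sits over the last $R$-step before the first return. With these two repairs your outline becomes a complete proof.
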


The bijection $\Psi:\Snn(123) \rightarrow \mathcal{D}_n$  given 
by Deutsch and Elizalde \cite{DE} can be described in a similar way. 
Given any permutation $\sg \in\Snn(123)$, the path $\Psi(\sg)$ is constructed exactly in the same way as the map $\Phi$. \fref{SnDn2} shows an example of this map that $\Psi(\sg)=DDRDDRRRDDRDRDRRDR$ where $\sg=869743251\in\Sn{9}(123)$. 

The map $\Psi$ is invertible for the same reason that each $123$-avoiding permutation has a unique left-to-right minima set. More details about  $\Psi$ can be found in \cite{DE}. We have the following lemma from \cite{QR1}.

\begin{lem}[\cite{QR1}, Lemma 4]\label{lem:p2} Let $P =\Psi(\sg)$ where $\sg\in\Snn(123)$. Then for each horizontal segment $H$ of $P$, the least element of the 
set of numbers associated to $H$ sits directly above the first right-step of $H$,
and the remaining numbers of the set form a consecutive decreasing sequence in $\sg$. 
	
\end{lem}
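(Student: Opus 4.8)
The plan is to translate the geometric definition of $\Psi$ into a statement about the running minima $m_i := \min(\sg_1,\dots,\sg_i)$ of $\sg$ — an analysis that is word-for-word the same as the one behind \lref{p1} and uses nothing about the avoided pattern — and to invoke $123$-avoidance only at the last step. Since a cell of column $i$ gets shaded exactly when it lies weakly to the north-east of some placed entry $\sg_j$ with $j\le i$, the lowest shaded cell of column $i$ is the one in row $m_i$ (rows counted from the bottom); hence the $i$-th right-step of $\Psi(\sg)$ runs along the bottom edge of row $m_i$ and is immediately preceded by $m_{i-1}-m_i$ down-steps (with $m_0:=n+1$). The sequence $m_1\ge m_2\ge\cdots\ge m_n$ is weakly decreasing and drops strictly at position $i$ precisely when $\sg_i$ is a left-to-right minimum of $\sg$. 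Consequently the horizontal segments of $\Psi(\sg)$ — the maximal runs of right-steps at a common height — correspond bijectively to the left-to-right minima: the segment $H$ starting with the $c$-th right-step, where $\sg_c$ is a left-to-right minimum, occupies columns $c,c+1,\dots,c+\ell-1$, with $c+\ell$ the position of the next left-to-right minimum (or $n+1$ if none follows); so the set of numbers associated to $H$ is exactly $\{\sg_c,\sg_{c+1},\dots,\sg_{c+\ell-1}\}$.

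For the first assertion, $m_c=m_{c+1}=\cdots=m_{c+\ell-1}$ forces $\sg_j>\sg_c$ for every $j$ with $c<j\le c+\ell-1$, so $\sg_c$ is the least number of the set; and $\sg_c$ is placed in row $\sg_c$ of column $c$, which is exactly the cell sitting directly above the first right-step of $H$ (that step running along the bottom edge of row $m_c=\sg_c$). For the second assertion, $\sg_{c+1},\dots,\sg_{c+\ell-1}$ are precisely the entries of $\sg$ lying strictly between the consecutive left-to-right minima $\sg_c$ and $\sg_{c+\ell}$, none of which is itself a left-to-right minimum. But in a $123$-avoiding permutation two non-left-to-right-minima can never occur in increasing order: if $\sg_a<\sg_b$ with $a<b$ and $\sg_a$ is not a left-to-right minimum, then picking $a'<a$ with $\sg_{a'}<\sg_a$ yields the $123$-occurrence $\sg_{a'}\sg_a\sg_b$. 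Applying this inside $\{c+1,\dots,c+\ell-1\}$ gives $\sg_{c+1}>\sg_{c+2}>\cdots>\sg_{c+\ell-1}$, a decreasing sequence occupying the consecutive positions $c+1,\dots,c+\ell-1$ of $\sg$, as required.

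The only delicate point is the first step — reading off ``shade to the north-east, then follow the south-west boundary'' as the running-minimum profile and matching the $i$-th right-step with column $i$ of $\sg$; after that both assertions drop out immediately. The pattern hypothesis is used only in the final paragraph, and it is worth noting that replacing $123$ by $132$ there — where instead $\sg_a>\sg_b$ would create a $132$-occurrence $\sg_c\sg_a\sg_b$ — is exactly what turns this decreasing run into the increasing run of \lref{p1}.
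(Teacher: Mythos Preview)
The paper does not supply its own proof of this lemma; it is quoted from \cite{QR1} and stated without argument. Your proof is correct and is essentially the natural one: reading the south-west boundary of the shaded region as the running-minimum profile $m_i=\min(\sg_1,\dots,\sg_i)$ immediately identifies each horizontal segment with the block of columns between two consecutive left-to-right minima, which gives the first assertion; and the observation that two non-left-to-right-minima in a $123$-avoiding permutation can never appear in increasing order forces the decreasing run and gives the second. Your closing remark comparing with the $132$-avoiding case of \lref{p1} is also accurate.
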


\subsection{Recursions for Dyck paths}\label{sec:recursions}
We 
review three Dyck path recursions for the generating function
\begin{equation}
D(x)=\sum_{n\geq0}|\mathcal{D}_n|x^n=\sum_{n\geq0}C_n x^n,
\end{equation}
which
will help us to develop recursions to compute 
generating functions of the form $\AAAover{\lambda}{\gamma}{(t,y,x)}$.

{\bf Recursion 1.} The contribution of the empty path (the path of size 0) to $D(x)$ is $1$. If the size of the path is not 0, we can expand the last horizontal segment of the path. When the last horizontal segment is of length $k>0$, the path ends with steps $DR^k$. The total weight of the last horizontal segment is $x^k$ as it stretches over $k$ columns. Referring to \fref{Dyck1}, 
the steps before $DR^k$ can be decomposed into $k$ smaller Dyck path structures: one path after the last $D$ step on the $(i-1)^\textnormal{th}$ diagonal before the last $D$ step on the \thn{i} diagonal for $i=0,\ldots,k-1$, each with a weight of $D(x)$.

\begin{figure}[ht]
	\centering
	\vspace{-1mm}
	\subfigure[\label{fig:Dyck1a}]{\begin{tikzpicture}[scale =.3]
		\draw (0,3)--(0,2)--(1,2)--(-2,5)--(-2,3)--(0,3);
		\node at (-3,3.5) {\tiny $D(x)$};
		\node at (-.5,2.5) {\tiny $x$};
		\path (4,11);
		\end{tikzpicture}}
	\subfigure[\label{fig:Dyck1b}]{\begin{tikzpicture}[scale =.3]
		\draw (0,6)--(0,3)--(2,3)--(2,2)--(4,2)--(-2,8)--(-2,6)--(0,6)--(0,5)--(3,2);
		\node at (-1,5.5) {\tiny $D(x)$};
		\node at (1,2.5) {\tiny $D(x)$};
		\node at (3,1.5) {\tiny $x^2$};
		\path (4,11);
		\end{tikzpicture}}
	\subfigure[\label{fig:Dyck1c}]{\begin{tikzpicture}[scale =.3]
		\draw (0,9)--(0,6)--(2,6)--(2,3)--(4,3)--(4,2)--(7,2)--(0,9)--(-2,11)--(-2,9)--(0,9)--(0,8)--(6,2)--(5,2)--(2,5);
		\node at (-1,8.5) {\tiny $D(x)$};
		\node at (1,5.5) {\tiny $D(x)$};
		\node at (3,2.5) {\tiny $D(x)$};
		\node at (5.5,1.5) {\tiny $x^3$};
		\end{tikzpicture}}
	\caption{the first recursion for $D(x)$ when the last segment is of size $k=1,2,3$.}
	\label{fig:Dyck1}
\end{figure}
Thus the contribution of the case when the last horizontal segment is of size $k$ is $x^k D(x)^k$, and $D(x)$ satisfies the following recursive equation:
\begin{equation}
D(x)=1+\sum_{k>0}x^k D(x)^k.
\end{equation}

{\bf Recursion 2.} The contribution of the empty path to the generating function $D(x)$ is $1$. If the size of the path $P$ is not 0, we can get a recursion by breaking $P$ at the first return position into $2$ smaller Dyck paths: $P=DP_1RP_2$. Here the path $P_1$ locates between the first $D$ step and the last $R$ step before the first return, and $P_2$ locates after the first return, each with a weight of $D(x)$; the $R$ step before the first return has a weight of $x$ since it takes up one column. Thus we have
\begin{equation}
D(x)=1+xD(x)^2.
\end{equation}

{\bf Recursion 3.} The third Dyck path recursion can be seen as the combination of the first recursion and the second recursion. 
\begin{figure}[ht]
	\centering
	\vspace{-8mm}
	\subfigure[\label{fig:Dyck3a}]{\begin{tikzpicture}[scale =.3]
		\draw (0,3)--(0,2)--(1,2)--(1,0)--(3,0)--(0,3);
		\node at (0,.5) {\tiny $D(x)$};
		\node at (-.5,2.5) {\tiny $x$};
		\path (4,9);
		\end{tikzpicture}}
	\subfigure[\label{fig:Dyck3b}]{\begin{tikzpicture}[scale =.3]
		\draw (0,6)--(0,3)--(2,3)--(2,2)--(4,2)--(4,0)--(6,0)--(0,6)--(0,5)--(3,2);
		\node at (-1,3.5) {\tiny $D(x)$};
		\node at (3,.5) {\tiny $D(x)$};
		\node at (3,1.5) {\tiny $x^2$};
		\path (7,9);
		\end{tikzpicture}}
	\subfigure[\label{fig:Dyck3c}]{\begin{tikzpicture}[scale =.3]
		\draw (0,9)--(0,6)--(2,6)--(2,3)--(4,3)--(4,2)--(7,2)--(7,0)--(9,0)--(0,9)--(0,8)--(6,2)--(5,2)--(2,5);
		\node at (-1,6.5) {\tiny $D(x)$};
		\node at (1,3.5) {\tiny $D(x)$};
		\node at (6,.5) {\tiny $D(x)$};
		\node at (5.5,1.5) {\tiny $x^3$};
		\end{tikzpicture}}
	\vspace{-3mm}
	\caption{the third recursion for $D(x)$ when the last segment before first return is of size $k=1,2,3$.}
	\label{fig:Dyck3}
\end{figure}

The contribution of the empty path to the generating function $D(x)$ is $1$. If the size of the path $P$ is not 0, we shall expand the last horizontal segment before the first return. Suppose that the last horizontal segment before the first return is of length $k>0$, then $P$ ends with steps $D R^k$ before the first return, and the steps $D R^k$ has a weight of $x^k$. Referring to \fref{Dyck3}, the part of path after the first return forms a Dyck path with a weight of $D(x)$, and the part of path before the steps $D R^k$ can be decomposed into $k-1$ smaller Dyck path structures: 
one path after the last $D$ step on the $(i-1)^\textnormal{th}$ diagonal before the last $D$ step on the \thn{i} diagonal for $i=1,\ldots,k-1$, each with a weight of $D(x)$. The contribution of the case when the last horizontal segment is of size $k$ is $D(x)\cdot x^k D(x)^{k-1}$, and we have
\begin{equation}
D(x)=1+D(x)\sum_{k>0}x^kD(x)^{k-1}.
\end{equation}

\section{Computation of the generating function $\AAAover{\la}{\ga}{(t,y,x)}$}

In this section, we compute generating functions of the form
\begin{equation}
\AAAover{\la}{\ga}{(t,y,x)}:=\sum_{n\geq 0}t^n\sum_{\sg \in \Snn(\la)}y^{\des(\sg)}x^{\gamma\text{-mch}(\sg)}
\end{equation}
for $\la$ and $\ga$ in $\Sn{3}$. As discussed in Section 2.1, we only need to compute the following $6$ generating functions: 
$\AAAover{123}{132}(t,y,x)$,
$\AAAover{123}{231}(t,y,x)$,
$\AAAover{123}{321}(t,y,x)$,
$\AAAover{132}{123}(t,y,x)$,
$\AAAover{132}{213}(t,y,x)$ and
$\AAAover{132}{231}(t,y,x)$.

\subsection{The function $\AAxyt{123}{132}$}
Given any permutation $\sg\in\Snn(123)$, Deutsch and Elizalde's bijection and \lref{p2}\ implies that $\sg_i<\sg_{i+1}$ if and only if $\sg_i$ sits immediately above the first $R$ step of some horizontal segment of the corresponding Dyck path $\Psi(\sg)$, thus $\des(\sg)$ is equal to the total number of Dyck path patterns $RD$ and $RRR$ in  $\Psi(\sg)$.
Furthermore, $132\text{-mch}(\sg)$ is equal to the number of path pattern $DRRR$-matches (which looks like \begin{tikzpicture}[scale =.25]\draw[help lines] (3,1) grid (0,2);\draw[ultra thick] (0,2)--(0,1)--(3,1);\end{tikzpicture} on the $n\times n$ square) in $\Psi(\sg)$. Therefore, we can study the distribution of the Dyck path pattern $DRRR$-matches in all Dyck paths of size $n$ instead of the distribution of the consecutive pattern 132 in all permutations in $\Snn(123)$.

We shall use the first Dyck path recursion to obtain a recursive equation about $\AAAover{123}{132}(t,y,x)$, and we have the following theorem. We will always use the shorthand $A$ for $\AAAover{\la}{\ga}(t,y,x)$ in each computation.

\begin{thm}\label{thm:1}
	The function $\AAxyt{123}{132}$ satisfies the following recursive equation:
	\begin{equation}\label{123132A}
	A=1+t(y(A-1)+1)^2+t^3(x-1)y^2(y(A-1)+1)^3,
	\end{equation}
	and
	\begin{equation}\label{thm1eq}
	\AAAover{123}{132}(t,1,x)|_{t^nx^k}=\frac{1}{k} \sum_{i=k}^{\lfloor\frac{n}{3}\rfloor} (-1)^{i-k} \binom{2n-3i}{n-3i,n+1-i,k-1,i-k}.
	\end{equation}
\end{thm}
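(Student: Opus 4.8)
The plan is to prove Theorem~\ref{thm:1} in two stages: first derive the functional equation \eqref{123132A} via the first Dyck path recursion (Recursion~1), then extract the coefficients by setting $y=1$ and solving the resulting cubic. For the functional equation, I would translate the statistics to Dyck paths using the preceding discussion: under $\Psi$, $\des(\sg)$ counts occurrences of the path patterns $RD$ and $RRR$ (equivalently, horizontal segments of length $\geq 2$ contribute extra, and each non-terminal segment contributes one $RD$), and $132\text{-mch}(\sg)$ counts $DRRR$-patterns. Concretely, a horizontal segment of length $k$ in the interior of the path, followed by a $D$, contributes $k-1$ descents from the internal $RRR$'s plus one from the trailing $RD$; a segment of length $k$ that is preceded by $D$ and whose internal structure contains $RRR$ contributes $\max(k-2,0)$ occurrences of $DRRR$. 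I would set up the generating function so that each horizontal segment carries a weight recording its length's contribution to $y$ (descents) and $x$ ($132$-matches), being careful that the first return / last segment gets slightly different treatment, exactly as in the three recursions of Section~\ref{sec:recursions}.

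Following Recursion~1: the empty path contributes $1$. Otherwise expand the last horizontal segment. If it has length $k$, the path ends $DR^k$, and the steps before decompose into $k$ sub-Dyck-path structures, each contributing a factor of $A$ but with the descent-at-the-top-of-the-first-$R$-step already accounted; I would package the "interior" sub-path weight as $y(A-1)+1$ (this is the standard device: $A-1$ strips the empty path, multiplying by $y$ records the descent created at the junction, and $+1$ restores the possibility of an empty sub-path). The terminal segment of length $1$ contributes the $t(y(A-1)+1)^2$ term (one column $\times$ the square of the interior structure, since $k=1$ gives... wait, the bookkeeping of how many $A$-factors vs. how the $t$-power counts columns needs care), and segments of length $\geq 3$ are where the $DRRR$-pattern appears, contributing the correction term $t^3(x-1)y^2(y(A-1)+1)^3$ — the $(x-1)$ being the usual "inclusion of the new pattern minus its absence" substitution, $t^3$ for three columns, $y^2$ for the two guaranteed internal descents, and the cube for three interior sub-paths. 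I would verify the geometric-series collapse of the $k\geq 1$ and $k\geq 3$ sums gives exactly \eqref{123132A}.

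For the coefficient formula \eqref{thm1eq}, set $y=1$: then $A = 1 + t A^2 + t^3(x-1)A^3$, i.e. $A$ satisfies a cubic. The plan is to apply Lagrange inversion. Writing $A = 1 + tA^2 + t^3(x-1)A^3 = 1 + tA^2(1 + t^2(x-1)A)$... hmm, it's cleaner to note $A-1$ satisfies a functional equation of the form $B = t\phi(B)$ for suitable $\phi$, or better, work directly: let $A = \sum c_n t^n$ and use multivariate Lagrange inversion on $A = 1 + t\,\Psi(A)$ where $\Psi(A) = A^2 + t^2(x-1)A^3$ — but the $t$-dependence of $\Psi$ complicates this, so instead I would homogenize by treating the cubic $A = 1 + tA^2 + s(x-1)A^3$ with $s = t^3$ and extract $[t^n]$ carefully, or expand $A^3 = A\cdot A^2$ and iterate. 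The cleanest route: from $A = 1 + tA^2 + t^3(x-1)A^3$, Lagrange inversion with $A = 1/(1 - H)$-type substitution, or directly expand via the ternary-tree-like recursion, gives $[t^n x^k] A$ as a sum over the number $i$ of "triple" nodes (with $i \geq k$ since each $x$ comes with $t^3$, and $\binom{\cdot}{\cdot}$ multinomials counting the tree shapes), then the $(x-1)^i$ expansion via $\sum_i \binom{i}{k}(-1)^{i-k}$ and a binomial identity collapses to the $\frac{1}{k}\sum (-1)^{i-k}\binom{2n-3i}{n-3i,n+1-i,k-1,i-k}$ form. The main obstacle I anticipate is exactly this last coefficient extraction: getting the Lagrange-inversion bookkeeping right for a cubic with the pattern-marking variable entangled with $t^3$, and then massaging the resulting alternating double sum into the stated single-sum multinomial identity — the functional-equation derivation is routine Dyck-path surgery, but the closed form requires an honest binomial-identity computation (likely the Lagrange–Bürmann formula applied to $A = 1 + tA^2 + t^3(x-1)A^3$ followed by extracting $[x^k]$ and simplifying with Vandermonde-type identities).
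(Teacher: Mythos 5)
Your overall route is the paper's: translate $\des$ and $132$-matches into the path statistics $\{RD,RRR\}$ and $DRRR$ under $\Psi$, expand the last horizontal segment via Recursion 1, and finish with Lagrange inversion at $y=1$. But the bookkeeping you yourself flag as needing care is exactly where your sketch breaks, and the paper's device for it is one you don't have: an auxiliary series $A_1$ (equation (\ref{A1def})) recording the statistics of the \emph{extended} path $PD$, so that $A_1=y(A-1)+1$ and each of the $k$ sub-Dyck-structures sitting above a last segment of length $k$ carries weight $A_1$ (its trailing junction $D$ supplies the extra $RD$). The segment itself then weighs $ty$, $t^2y$, or $t^kxy^{k-1}$ according as $k=1$, $k=2$, $k\geq 3$, giving $A_1=1+tyA_1+t^2yA_1^2+t^3xy^2A_1^3/(1-tyA_1)$ and hence $A=1+tA_1+t^2A_1^2+t^3xyA_1^3/(1-tyA_1)$. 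In particular the $k=1$ term is $t\,(y(A-1)+1)$ to the \emph{first} power, not the square you wrote: a last segment of length $k$ always produces exactly $k$ sub-path factors.

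You should also be aware that your plan to ``verify the geometric-series collapse gives exactly (\ref{123132A})'' cannot succeed, because clearing the denominator in the recursion above yields $A=1+t(y(A-1)+1)^2+t^2(1-y)(y(A-1)+1)^2+t^3(x-1)y(y(A-1)+1)^3$, whereas (\ref{123132A}) as printed drops the $t^2(1-y)(\cdot)^2$ term and has $y^2$ where $y$ should be; indeed (\ref{123132A}) already fails at order $t^2$, predicting $2y$ instead of the correct $1+y$ coming from $\Sn{2}(123)=\{12,21\}$. The two expressions agree precisely at $y=1$, which is all that is needed for (\ref{thm1eq}): there the equation becomes $A=1+tA^2+t^3(x-1)A^3$, equivalently $A=\bigl(1+(x-1)t^3A^3\bigr)/(1-tA)$. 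For the coefficient extraction, your worry about $x$ being entangled with $t^3$ dissolves under the substitution $F=tA$, which turns the equation into $F=t\bigl(1+(x-1)F^3\bigr)/(1-F)$; applying the Lagrange Inversion Theorem with $\delta(z)=(1+(x-1)z^3)/(1-z)$, expanding $(1+(x-1)z^3)^n$ binomially and then $(x-1)^i=\sum_k\binom{i}{k}(-1)^{i-k}x^k$, is exactly the computation the paper carries out (for general $m$) in Section 5.1. That half of your plan is sound in outline but entirely unexecuted, and the single-sum multinomial form in (\ref{thm1eq}) drops out directly from this route without any further Vandermonde-type massaging.
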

\begin{proof} 
	Under the Dyck path bijection $\Psi$, we can take $A$ as a generating function of Dyck paths, where for each path, $t$ tracks the size of the path, $y$ tracks the total number of $RD$ and $RRR$-matches, and $x$ tracks the number of $DRRR$-matches. 
	
	We say that a horizontal segment is \emph{interior} if it is neither the first horizontal segment nor the last horizontal segment. For each Dyck path $P$, we call the path $PD$ (adding an extra step $D$ at the end of the path $P$) the extended path of $P$. 
	For our convenience, we shall define a generating function $A_1{(t,y,x)}$ which is similar to $A$:
	\begin{equation}\label{A1def}
	A_1{(t,y,x)}:=\sum_{n\geq 0}t^n\sum_{P\in\Dn}y^{\{RD,RRR\}\mbox{-mch}(PD)}x^{DRRR\mbox{-mch}(PD)}.
	\end{equation}
	$A_1$ is short for $A_1{(t,y,x)}$, unless stated otherwise. Clearly $A_1$ is a generating function of Dyck paths, where for each path $P$, $t$ tracks the size of $P$, $y$ tracks the total number of $RD$ and $RRR$-matches in the extended path $PD$, and $x$ tracks the number of $DRRR$-matches in the extended path $PD$. In other words, $A_1$ is a generating function of Dyck paths in which we suppose there are some lower horizontal segments after the last step, and the last segment is an interior segment. 
	
	The difference between $A$ and $A_1$ is that, for any nonempty path, $A_1$ counts one more pattern $RD$ than $A$ formed by the last step $R$ and the extended step $D$. Thus we have the following relation between $A$ and $A_1$:
	\begin{equation}\label{AA01}
	A=\frac{A_1-1}{y}+1.
	\end{equation}
	
	Next, we shall formulate a recursion for $A_1$. We use Recursion 1 described in Section \ref{sec:recursions}, i.e.\ we expand the last horizontal segment of a Dyck path $P$. The empty path has a contribution of $1$ to $A_1$. If the last horizontal segment is of length $1$, then the path ends with steps $DR$, and there is a Dyck path structure before the last two steps with a weight of $A_1$, as shown in \fref{Dyck1a}. The weight of the last segment is $ty$ since the last step $R$ and the extended step $D$ form an $RD$ pattern contributing $1$ to the power of $y$. Thus the contribution in this case is $t y A_1$. 
	
	If the last horizontal segment is of length $2$, then the path ends with steps $DRR$, and there are 2 Dyck path structures before the last three steps $DRR$ each with a weight of $A_1$, as shown in \fref{Dyck1b}. The weight of the last segment is $t^2y$ since the last step $R$ and the extended step $D$ form an $RD$ pattern contributing $1$ to the power of $y$. Thus the contribution in this case is $t^2 y A_1^2$. 
	
	If the last horizontal segment is of length $k\geq 3$, then the path ends with steps $DR^k$, and there are $k$ Dyck path structures before the last $k+1$ steps $DR^k$ each with a weight of $A_1$, as shown in \fref{Dyck1c}. The weight of the last segment is $t^k x y^{k-1}$ since the last step $R$ and the extended step $D$ form an $RD$ pattern, and the $R^k$ steps in the last segment contain $k-2$ $RRR$ patterns, contributing $1+k-2$ to the power of $y$. The $DRRR$ steps in the last segment contribute $1$ to the power of $x$. Thus the contribution in this case is $t^k x y^{k-1} A_1^k$. Thus we have
	\begin{eqnarray}\label{A01}
	A_1&=&1+t y A_1 + t^2yA_1^2+t^3xy^2A_1^3 +t^4xy^3A_1^4+\cdots\nonumber\\
	&=&1+tyA_1+t^2yA_1^2+\frac{t^3xy^2A_1^3}{1-tyA_1}.
	\end{eqnarray}
	
	We can get equation (\ref{123132A}) from equation (\ref{A01}) by using the substitution relation in equation  (\ref{AA01}).
	We shall prove equation (\ref{thm1eq}) in Section 5.1.
\end{proof}

Setting $x=0$ and $y=1$ in $\AAxyt{123}{132}$, we have
\begin{equation}\label{motz}
A_{123}^{\underline{132}}(t,1,0)=\frac{1 - t - \sqrt{1-2t-3t^2} }{2t^2},  
\end{equation}
which coincides with the generating function of Motzkin numbers (entry  A001006 in the OEIS \cite{OEIS}). In other words, the number of permutations in $\Snn$ avoiding both the classical pattern $123$ and the consecutive pattern $132$ is equal to the number of Motzkin paths from $(0,0)$ to $(n, 0)$. 

We will generalize our method to compute $A_{123}^{\underline{1m\cdots 2}}(t,y,x)$ for any $m\geq2$ in Section 5.1.

\subsection{The function $\AAxyt{123}{231}$}

For any permutation $\sg\in\Snn(123)$, \lref{p2} implies that 
$\sg_i\sg_{i+1}\sg_{i+2}$ is a $231$-match of $\sg$ if and only if $\sg_i$ and $\sg_{i+2}$ both sit above the first $R$ steps of some horizontal segments of $\Psi(\sg)$, thus
the number of consecutive $231$-matches in $\sg$ is equal to the number of path pattern $DRRD$-matches (looks like \begin{tikzpicture}[scale =.2]\draw[help lines] (0,2) grid (2,0);\draw[ultra thick] (0,2)--(0,1)--(2,1)--(2,0);\end{tikzpicture}) in the Dyck path $\Psi(\sg)$. 
As with the analysis of the generating function $\AAxyt{123}{132}$, we only need to study the distribution of the pattern $DRRD$-matches  in all Dyck paths to get $\AAxyt{123}{231}$.

\begin{thm}\label{thm:2}
	The function $\AAxyt{123}{231}$ satisfies the following recursion:
	\begin{multline}\label{thm2A}
	A=t^2 (y-1)^2 \left(y \left((A-1)^2 x^2 y+x \left((A-1)^3 y^3+(A-1)^2
	y^2+(A-1) y+2 A-1\right)\right.\right.\\
	\left.\left.-((A-1) y+1) (y ((A-2) (A-1) y+2
	A-3)+3)\right)+1\right)+(A-1) y ((y-3) y+3)+1\\
	-t (y-1)^2 ((A-1) y+1) (y (A
	(x+y-2)-x-y+3)-1), 
	\end{multline}
	and
	\begin{equation}\label{thm2eq}
	\AAAover{123}{231}(t,1,x)|_{t^nx^k}=\frac{1}{n} \sum_{i=0}^{\lfloor\frac{3n-1}{5}\rfloor}(-1)^{4n+k+1}\binom{n}{i} \binom{n-2i}{3n-1} \binom{2n-2i}{k}.
	\end{equation}
\end{thm}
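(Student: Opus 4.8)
The plan is to follow the template of the proof of \tref{1}. Using the Deutsch--Elizalde bijection $\Psi$ together with \lref{p2}, I first reinterpret $A=\AAAover{123}{231}(t,y,x)$ as a generating function over $(n,n)$-Dyck paths in which $t$ records the size, $y$ records the number of $\{RD,RRR\}$-matches (which equals $\des(\sg)$), and $x$ records the number of $DRRD$-matches (which, as noted just before the theorem, equals $231\text{-mch}(\sg)$). A short step-by-step inspection shows that inside a path $P$ a $DRRD$-match occurs exactly once at each horizontal segment of length $2$ that is \emph{not} the last segment of $P$ (length-$1$ and length-$\ge 3$ segments never produce one); equivalently, in the extended path $PD$ (the path $P$ with one extra step $D$ appended) the number of $DRRD$-matches equals the total number of length-$2$ horizontal segments of $P$.

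I would then apply Recursion 1, expanding the last horizontal segment, and introduce the extended-path generating function
\[
A_1(t,y,x):=\sum_{n\ge 0}t^n\sum_{P\in\Dn}y^{\{RD,RRR\}\text{-mch}(PD)}x^{DRRD\text{-mch}(PD)},
\]
exactly as $A_1$ was introduced in the proof of \tref{1}. Since appending $D$ turns a \emph{final} segment of length $2$ into a genuine $DRRD$-match, while final segments of length $1$ or $\ge 3$ create none, expanding the last horizontal segment of $P$ and summing over its length yields
\[
A_1=1+tyA_1+t^2xyA_1^2+\frac{t^3y^2A_1^3}{1-tyA_1},
\]
which is the analogue of equation~\eqref{A01} with the factor $x$ moved from the length-$\ge 3$ term to the length-$2$ term, reflecting the difference between a $DRRR$-match (one per horizontal segment of length $\ge 3$) and a $DRRD$-match (one per horizontal segment of length $2$).

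The genuinely new point, absent from \tref{1}, is that $A_1$ is no longer simply $y(A-1)+1$: appending the extra $D$ contributes one more factor $x$ precisely when the last segment of $P$ has length $2$, so
\[
A_1=1+y(A-1)+y(x-1)A^{(2)},
\]
where $A^{(2)}$ is the generating function of the Dyck paths whose last horizontal segment has length exactly $2$. A one-line last-segment decomposition---the steps $DRR$ preceded by two Dyck substructures, each immediately followed by a $D$ and hence weighted by $A_1$---gives $A^{(2)}=t^2A_1^2$, so altogether we obtain the closed system
\[
A=1+\frac{A_1-1}{y}-(x-1)t^2A_1^2,\qquad 1-A_1+tyA_1^2+t^2y(x-y)A_1^2+t^3y^2(1-x)A_1^3=0.
\]
Equation~\eqref{thm2A} is then obtained by eliminating $A_1$: use the first relation to rewrite $A_1^2$ (hence every higher power of $A_1$) linearly in $A_1$, substitute into the second relation to get a linear equation for $A_1$, solve for $A_1$ as a rational function of $A$, and substitute back---this is the resultant of a quadratic and a cubic in $A_1$, which produces the rather unwieldy right-hand side of~\eqref{thm2A}. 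I expect this elimination, together with getting every boundary case of the last-segment expansion correct (the empty path; lengths $1$, $2$, $\ge 3$; and which of these create or destroy $DRRD$-matches in $P$ versus in $PD$), to be the main source of friction.

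For the coefficient identity~\eqref{thm2eq}, I would specialize $y=1$, so that $A=A_1-(x-1)t^2A_1^2$ where $A_1=1+tA_1^2+t^2(x-1)A_1^2(1-tA_1)$, and extract the coefficient of $t^nx^k$ by Lagrange inversion applied to $A_1$ (in a form allowing the right-hand side to depend on $t$, or after introducing a catalytic variable, as is done for~\eqref{thm1eq} in the later sections), followed by expanding the resulting powers with the binomial theorem and collecting terms. The prefactor $\tfrac1n$ and the product of three ordinary binomial coefficients in~\eqref{thm2eq} are exactly the shape such an inversion yields; I expect this final manipulation of the multiple binomial sums to be routine but lengthy, and---as with~\eqref{thm1eq}---deferred to a later section.
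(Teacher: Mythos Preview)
Your proposal is correct and follows essentially the same approach as the paper: the same Dyck-path reinterpretation via $\Psi$, the same auxiliary function $A_1$, the same recursion $A_1=1+tyA_1+t^2xyA_1^2+\dfrac{t^3y^2A_1^3}{1-tyA_1}$, and the same deferral of \eqref{thm2eq} to a Lagrange-inversion computation. Your intermediate quantity $A^{(2)}=t^2A_1^2$ is just an expository wrinkle---once substituted, your relation $A=1+\dfrac{A_1-1}{y}-(x-1)t^2A_1^2$ is exactly the paper's equation \eqref{AA02}, and the elimination of $A_1$ proceeds identically.
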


\begin{proof}
	Under the Dyck path bijection $\Psi$, we can take $A$ as a generating function of Dyck paths where for each path, $t$ tracks the size of the path, $y$ tracks the total number of $RD$ and $RRR$-matches, and $x$ tracks the number of $DRRD$-matches. 
	
	We define a generating function $A_1$ which is similar to $A$ like Section 3.1:
	\begin{equation}
	A_1{(t,y,x)}:=\sum_{n\geq 0}t^n\sum_{P\in\Dn}y^{\{RD,RRR\}\mbox{-mch}(PD)}x^{DRRD\mbox{-mch}(PD)}.
	\end{equation}
	In $A_1$, for each Dyck path $P$, we let $t$ track the size of $P$,  $y$ track the total number of $RD$ and $RRR$-matches in $PD$, and $x$ track the number of $DRRD$-matches in $PD$.
	
	The difference between $A$ and $A_1$ is that, for any nonempty path $P$, $A_1$ counts one more pattern $RD$ that the last step $R$ and the extended  step $D$ form; if the last segment of $P$ has length 2, then the last segment and the extended step $D$ form a $DRRD$ pattern contributing 1 to the power of $x$. Thus we have the following relation between $A$ and $A_1$,
	\begin{equation}\label{AA02}
	A=1 + \frac{A_1-1}{y} +t^2(1-x)A_1^2.
	\end{equation}
	
	Next, we shall formulate a recursion for $A_1$ using the first Dyck path recursion  by expanding the last horizontal segment of a Dyck path $P$. The empty path has a contribution of $1$. Let $k$ be the size of the last segment, then the case when $k=1$ still has a contribution of $t y A_1$ to $A_1$.
	
	If $k=2$, then the path ends with steps $DRR$, and there are 2 Dyck path structures before the last three steps each with a weight of $A_1$, as shown in \fref{Dyck1b}. The weight of the last segment is $t^2xy$ since the last step $R$ and the extended step $D$ form an $RD$ pattern contributing $1$ to the power of $y$, and the last segment of $P$ and the extended step $D$ form a $DRRD$ pattern contributing 1 to the power of $x$. Thus the contribution in this case is $t^2 xy A_1^2$. 
	
	If $k\geq 3$, then there is no pattern $DRRD$ appearing in the last segment, and the last segment will contribute $0$ to the power of $x$. Similar to the discussion in Section 3.1, the contribution of the case when the size of the last segment is $k\geq 3$ is $t^k y^{k-1} A_1^k$. Thus we have
	\begin{eqnarray}\label{A02}
	A_1&=&1+t y A_1 + t^2xyA_1^2+t^3y^2A_1^3+\cdots\nonumber\\
	&=&1+t y A_1 + t^2xyA_1^2+\frac{t^3y^2A_1^3}{1-tyA_1}.
	\end{eqnarray}
	We can prove equation (\ref{thm2A}) from equation (\ref{A02}) to   by using the substitution relation implied in equation  (\ref{AA02}).
	We shall prove equation (\ref{thm2eq}) in Section 5.2.
\end{proof}
Setting $x=0$ and $y=1$ in $\AAxyt{123}{231}$, the coefficient list $\{a_n\}_{n\geq 0}$ in the Taylor series expansion of $\AAAover{123}{231}(t,1,0)$ is
$\{1, 1, 2, 4, 9, 23, 63, 178, 514, \ldots\}$
which is entry  A135307 in the OEIS \cite{OEIS}, and the formula for $a_n$ is given by $$a_n=\frac{1}{n} \sum_{i=0}^{(n-1)/2}(-1)^i \binom{n}{i}\binom{2n-3i}{n-i+1}.$$

We will generalize our method to compute $A_{123}^{\underline{2m\cdots 31}}(t,y,x)$ for any $m\geq2$ in Section 5.2.

\subsection{The function $\AAxyt{123}{321}$}

By \lref{p2}, the consecutive pattern $321$ of a permutation $\sg\in\Snn(123)$ does not correspond to a single path pattern of $\Psi(\sg)$. We will show the details in the proof of the following theorem.
\begin{thm}\label{thm:3}
	The generating function $\AAxyt{123}{321}$ is given by
	\begin{eqnarray}
	\AAxyt{123}{321}&=&1- \frac{1}{2 t y^2 \left(x^2 (-t) y+x+t\right)^3}
	\left(2 (x-1)^2 t^6 y^3 \left(x^2 y-1\right)^2\right.\nonumber\\
	&&+t^2 \left(-x^2 (2
	x+5) y^2+(6 x+2) y-1\right)+2 t^4 y \left((2-3 x) x^4
	y^3+\left(3 x^2+2 x-2\right) x^2 y^2\right.\nonumber\\
	&&\left.-\left(3
	x^2+x-1\right) y+1\right)+2 (x-1) t^5 y^2 \left(x^5
	y^3-3 x^4 y^2+4 x^2 y-x y-1\right)\nonumber\\\nonumber
	&&+2 t^3 y \left(3 x^4
	y^2-x^3 y-5 x^2 y+x+2\right)+t (4 x y-2)-1\\
	&&\left.+(t (x y-1)-1)^2\sqrt{4 x^2 t^2 y^2-4 x t y-4 t^2 y+1}\right).
	\end{eqnarray}
\end{thm}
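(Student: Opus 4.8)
I would follow the template of \tref{1} and \tref{2}: transfer to Dyck paths via the Deutsch--Elizalde bijection $\Psi$, reinterpret $\des$ and $321\text{-mch}$ as path statistics, and extract functional equations by expanding the last horizontal segment (Recursion~1 of Section~\ref{sec:recursions}). Throughout, $A$ denotes $\AAxyt{123}{321}$. The first task is the combinatorial dictionary. As in Section~3.1, for $\sg\in\Snn(123)$ the quantity $\des(\sg)$ is the number of $RD$- and $RRR$-patterns in $P=\Psi(\sg)$. For $321$-matches, \lref{p2} says that a horizontal segment $H$ of length $k$ is read in $\sg$ as $a_1,a_k,a_{k-1},\dots,a_2$ (the least entry, then a strictly decreasing run), and that the junction between two consecutive segments is always a descent. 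It follows that a consecutive $321$-match of $\sg$ arises in exactly one of three ways: \textbf{(A)} entirely inside a segment of length $\ge 4$ (a segment of length $k$ accounts for $\max(k-3,0)$ of them); \textbf{(B)} from the last two entries of a segment of length $\ge 3$ together with the first entry of the next segment (one match per segment of length $\ge 3$ that is not the last segment); and \textbf{(C)} from the two junction descents flanking a length-$1$ segment that is neither the first nor the last segment (one match per such segment). Hence $321\text{-mch}(\sg)$ is a sum of these three contributions and is \emph{not} the number of occurrences of any single Dyck-path pattern; this is exactly what makes the present computation more delicate than those in Sections~3.1--3.2.

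Because contribution (B) is sensitive to whether a segment has a segment to its right, and (C) to whether it has segments on \emph{both} sides, the $321$-weight of a Dyck path depends on the role (first / interior / last) of each of its horizontal segments, so I would introduce auxiliary generating functions over Dyck paths weighted as if placed in various ``contexts''. The cleanest is $A_2$, the generating function of Dyck paths sitting in a fully interior context: every segment (including the first and the last) is treated as having a segment on each side, and an extra trailing $D$-step is recorded by one additional factor of $y$. Expanding the last segment by Recursion~1 -- a last segment of length $k$ now contributes the $321$-weight $x^{1},x^{0},x^{k-2}$ for $k=1,2,\ge 3$ respectively (type~C when $k=1$, type~B when $k\ge3$), while each of the $k$ sub-paths again sits in an interior context -- yields
\begin{equation*}
A_2 = 1 + txy\,A_2 + t^2 y\,A_2^2 + \frac{t^3 x y^2 A_2^3}{1-txy\,A_2}.
\end{equation*}
Clearing the denominator, the cubic terms cancel and one is left with the \emph{quadratic}
\begin{equation*}
\bigl(txy + t^2 y(1-x^2y)\bigr)A_2^2 - A_2 + 1 = 0 ,
\end{equation*}
whose discriminant is precisely $4x^2t^2y^2-4xty-4t^2y+1$; taking the branch with constant term $1$ produces the square root that appears in the statement.

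It then remains to express $A$ in terms of $A_2$, and here one must respect that the first horizontal segment of $\sg$ is genuinely ``first'' (no type-(C) charge) and the last is genuinely ``last'' (no type-(B) or type-(C) charge, and no trailing $D$). I would introduce one further auxiliary function $A_1$ for a path in a context with segments on its right but possibly not on its left -- the role played, in a Recursion~1 expansion, by the leftmost non-empty sub-path -- write down its equation (linear in $A_1$ over $A_2$), and likewise write $A$ by one Recursion~1 step in which the leftmost non-empty sub-path carries weight $A_1$, the remaining sub-paths carry weight $A_2$, and the last segment of $\sg$ carries the ``last-segment'' weight $x^{\max(k-3,0)}y^{\max(k-2,0)}$. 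Eliminating $A_1$ and using the quadratic to reduce powers of $A_2$ then presents $A$ as a rational function of $A_2$, and substituting the closed form for $A_2$ and simplifying gives the displayed expression (in particular the cube $(x^2(-t)y+x+t)^3$ in the denominator reflects the factor $A_2^3$ that survives). The main obstacle is precisely this bookkeeping: securing the three-type classification of $321$-matches and then, through the recursion, tracking which segments count as first / interior / last so that $A_1$ and $A_2$ are defined consistently. Once the system of functional equations is in place, the rest -- the vanishing of the cubic terms and the final algebraic simplification -- is routine and is best verified on a computer algebra system against the low-order data $A = 1 + t + t^2(1+y) + t^3(4y+xy^2) + \cdots$.
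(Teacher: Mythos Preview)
Your approach is essentially the paper's. Your three-way classification (A)/(B)/(C) of $321$-matches is precisely the content of the paper's Table~1, your $A_2$ is the paper's $A_0$ (same recursion, same quadratic reduction), your $A_1$ is the paper's $A_1$, and your Recursion~1 expansion of $A$ with the leftmost non-empty sub-path weighted by $A_1$ and the others by $A_2$ is exactly equation~(\ref{123A}). The only cosmetic difference is that the paper derives the $A_1$ recursion via Recursion~3 (expanding the last segment before the first return) rather than Recursion~1; both yield an equation linear in $A_1$ over $A_0$, so this is an immaterial choice.
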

\begin{proof}
	We can count consecutive $321$-matches from the horizontal segments of a Dyck path. It is not hard to see from \lref{p2} that the contribution of a horizontal segment to $321$-matches is given by \taref{123c321} if there are more than one horizontal segments in the path.
	
	\begin{table}[ht!]
		\centering
		\vspace*{-3mm}
		\caption{the contribution of a horizontal segment to the power of $x$ in $\AAxyt{123}{321}$.}
		\vspace*{3mm}
		\begin{tabular}{|c|c|c|c|}
			\hline
			size&first segment&interior segment&last segment\\\hline
			$1$&$0$&$1$&$0$\\\hline
			$2$&$0$&$0$&$0$\\\hline
			$3$&$1$&$1$&$0$\\\hline
			$4$&$2$&$2$&$1$\\\hline
			$5$&$3$&$3$&$2$\\\hline
			$\cdots$&$\cdots$&$\cdots$&$\cdots$\\\hline
		\end{tabular}
		\label{table:123c321}
	\end{table}
	
	It is clear from the table that the type and the size of a segment of $\Psi(\sg)$ determine its contribution to consecutive $321$-matches in $\sg\in\Snn(123)$. In this proof, we shall define another $2$ generating functions, $A_0$ and $A_1$, which are similar to $A$. For a permutation $\sg\in\Snn$, we let $\sg0$ denote the length-$(n+1)$ word obtained by adding a $0$ at the end of $\sg$, and let $(n+1)\sg0$ denote the length-$(n+2)$ word obtained by adding an $(n+1)$ before $\sg$ and a $0$ at the end of $\sg$. Then we define
	\begin{eqnarray}
	A_0{(t,y,x)}&:=&\sum_{n\geq 0}t^n\sum_{\sg\in\Snn(123)}y^{\des((n+1)\sg0)}x^{\textrm{321-mch}((n+1)\sg0)},\\
	A_1{(t,y,x)}&:=&\sum_{n\geq 0}t^n\sum_{\sg\in\Snn(123)}y^{\des(\sg0)}x^{\textrm{321-mch}(\sg0)}.
	\end{eqnarray}
	Both $A_0$ and $A_1$ track the same statistics as $A$. $A_0$ is the generating function of Dyck paths that we take every segment as an interior segment, and $A_1$ is the generating function of Dyck paths that we suppose the last segment is an interior segment. 
	
	\begin{figure}[ht]
		\centering
		\vspace{-1mm}
		\begin{tikzpicture}[scale =.5]
		\draw[dashed, thick] (-.5,3.5) -- (.5,3.5) -- (1,3);
		\draw[dashed, thick] (3,0) -- (3,1);
		\draw (1,1) rectangle (3,3);
		\node at (2,2) {$A_0$};
		\path (6,0);
		\end{tikzpicture}	
		\begin{tikzpicture}[scale =.5]
		\draw[dashed, thick] (3,0) -- (3,1);
		\draw (1,1) rectangle (3,3);
		\node at (2,2) {$A_1$};
		\path (6,0);
		\end{tikzpicture}
		\begin{tikzpicture}[scale =.5]
		\draw (1,1) rectangle (3,3);
		\node at (2,2) {$A$};
		\path (5,0);
		\end{tikzpicture}
		\vspace{-3mm}
		\caption{Dyck paths in the generating functions $A_0$, $A_1$ and $A$.}
		\label{fig:A0A1A}
	\end{figure}
	In other words, in each path $P$ in the generating function $A_0$, we suppose that there are some $R$ steps before $P$ and a $D$ step following $P$ that do not contribute to the size of the path but contribute to the pattern matches; and in each path $P$ in the generating function $A_1$, there is a $D$ step after $P$ that does not contribute to the size but contributes to the pattern matches. \fref{A0A1A} gives sketches of paths in $A_0$, $A_1$ and $A$.

	First we shall compute a recursion for $A_0$ since it only consists of interior segments. For each segment in $A_0$, we have its contribution to descents and  $321$-matches by \lref{p2}, summarized in \taref{123A0}. 
	
	\begin{table}[ht!]
		\centering
		\vspace*{-3mm}
		\caption{the contribution of horizontal segments in $A_0$.}
		\vspace*{3mm}
		\begin{tabular}{|c|c|c|}
			\hline
			size&contribution to descents&contribution to $321$-matches\\
			& (power of $y$)& (power of $x$)\\\hline
			$1$&$1$&$1$\\\hline
			$2$&$1$&$0$\\\hline
			$3$&$2$&$1$\\\hline
			$4$&$3$&$2$\\\hline
			$\cdots$&$\cdots$&$\cdots$\\\hline
		\end{tabular}
		\label{table:123A0}
	\end{table}
	
	Then, by applying the first Dyck path recursion to expand the last horizontal segment of a Dyck path, we have a recursion for $A_0$ according to \taref{123A0}:
	\begin{eqnarray}\label{123A0}
	A_0&=&1+txyA_0+t^2yA_0^2+t^3xy^2A_0^3+t^4x^2y^3A_0^4+\cdots\nonumber\\
	&=&1+txyA_0+\frac{t^2yA_0^2}{1-txyA_0}.
	\end{eqnarray}
	
	Next, we want to compute a recursion for $A_1$ in terms of $A_0$ and $A_1$. We use the third Dyck path recursion which expands the last horizontal segment before the first return that is of size $k$. When $k=1$, there are only 2 steps, $DR$,  before the first return. There is a Dyck path structure $P_1$ after the first return with a weight of $A_0$ since all of the segments in $P_1$ are interior segments. The contribution to $A_1$ of this case is $tyA_0$ as shown in \fref{A11}. 
	
	When $k=2$, the path ends with steps $DRR$ before the first return. There is a Dyck path structure $P_1$ after the first return with a weight of $A_0$ for the same reason. By the third Dyck path recursion, there is another Dyck path structure $P_2$ above the last horizontal segment before the first return. The weight of $P_2$ is $A_1$ since it contains the first segment of the whole path. The contribution of this case is $t^2yA_0A_1$ as shown in \fref{A12}.
	
	When $k=3$, the path ends with steps $DRRR$ before the first return. There is a Dyck path structure $P_1$  after the first return with a weight of $A_0$. By the third Dyck path recursion, there are two Dyck path structures $P_2,\ P_3$ (counting from top to bottom) above the last horizontal segment before the first return. If $P_2$ is not empty, then it has a weight of $A_1-1$ since it contains the first segment, and the weight of $P_3$ is $A_0$. If $P_2$ is empty, then it has a weight of $1$, and $P_3$ has a weight of $A_1$ since it contains the first segment of the whole path. The contribution of this case is $t^3xy^2A_0(A_0(A_1-1)+A_1)$ as shown in \fref{A13}.

	\begin{figure}[ht]
		\centering
		\vspace{-1mm}
		\subfigure[\label{fig:A11}]{\begin{tikzpicture}[scale =.3]
			\draw (0,3)--(0,2)--(1,2)--(1,0)--(3,0)--(0,3);
			\node at (1.6,.5) {\tiny $A_0$};
			\node at (-.5,2.5) {\tiny $ty$};
			\path (4,-6);
			\path (-2,-6);
			\end{tikzpicture}
		}
		\subfigure[\label{fig:A12}]{\begin{tikzpicture}[scale =.3]
			\draw (0,6)--(0,3)--(2,3)--(2,2)--(4,2)--(4,0)--(6,0)--(0,6)--(0,5)--(3,2);
			\node at (.6,3.5) {\tiny $A_1$};
			\node at (4.6,.5) {\tiny $A_0$};
			\node at (3,1.5) {\tiny $t^2y$};
			\path (7,-3);
			\end{tikzpicture}
		}
		\subfigure[\label{fig:A13}]{\begin{tikzpicture}[scale =.3]
			\draw (0,9)--(0,6)--(2,6)--(2,3)--(4,3)--(4,2)--(7,2)--(7,0)--(9,0)--(0,9)--(0,8)--(6,2)--(5,2)--(2,5);
			\node at (.6,5.5) {\tiny $A_1-1$};
			\node at (2.6,3.5) {\tiny $A_0$};
			\node at (7.6,.5) {\tiny $A_0$};
			\node at (5.5,1.5) {\tiny $t^3xy^2$};
			\end{tikzpicture}\hspace*{-5mm}
			\begin{tikzpicture}[scale =.3]
			\draw (2,7)--(2,3)--(4,3)--(4,2)--(7,2)--(7,0)--(9,0)--(2,7)--(2,6)--(6,2)--(5,2)--(2,5);
			\node at (2.6,3.5) {\tiny $A_1$};
			\node at (7.6,.5) {\tiny $A_0$};
			\node at (5.5,1.5) {\tiny $t^3xy^2$};
			\path (2,-2);
			\end{tikzpicture}}
		\vspace{-3mm}
		\caption{contributions to $A_1$ when the last segment before the first return is of size $1,2,3$.}
		\label{fig:A1}
	\end{figure}
	
	Similar to the case when $k=3$, if the last horizontal segment before the first return is of size $k\geq3$, then the total contribution is $t^kx^{k-2}y^{k-1}A_0(A_0^{k-2}(A_1-1)+\cdots+A_0(A_1-1)+A_1)$. It follows that
	\begin{eqnarray}\label{123A1}
	A_1&=&1+tyA_0+t^2yA_0A_1+t^3xy^2A_0(A_0(A_1-1)+A_1)+\cdots\nonumber\\
	&=&1+tyA_0+\frac{A_0-A_1}{A_0-1}\frac{t^2yA_0}{1-txy}+\frac{A_1-1}{A_0-1}\frac{t^2yA_0^2}{1-txyA_0}.
	\end{eqnarray}
	
	Finally for $A$, we use the first Dyck path recursion by expanding the last horizontal segment. The last segment of size $k$ contains $k-2$ pattern $RRR$-matches and $k-3$ pattern $RRRR$-matches. Thus it contributes nothing to the power of $y$ and $x$ when $k<3$, and it has a weight of $t^kx^{k-3}y^{k-2}$ when $k\geq 3$.
	
	Above the last segment, each Dyck path structure has a weight of either $A_0$ or $A_1$ based on whether it contains the first segment of the whole path or not. Similar to our analysis in the case when $k\geq 3$ in $A_1$, the contribution of the part of path above the last segment is
	\begin{equation*}
	A_0^{k-1}(A_1-1)+A_0^{k-2}(A_1-1)+\cdots+A_0^{2}(A_1-1)+A_0(A_1-1)+A_1.
	\end{equation*}
	Combining the weight of the last segment and the part above the last segment, we can rewrite $A$ in terms of $A_1$ and $A_0$ as follows:
	\begin{eqnarray}\label{123A}
	A&=&1+tA_1+t^2(A_0(A_1-1)+A_1)+t^3y(A_0^2(A_1-1)+A_0(A_1-1)+A_1)+\cdots\nonumber\\
	&=&1+tA_1+t^2(A_0 A_1 -A_0 +A_1) + \frac{A_0-A_1}{A_0-1}\frac{t^3y}{1-txy}+\frac{A_1-1}{A_0-1}\frac{t^3yA_0^3}{1-txyA_0}.
	\end{eqnarray}
	Solving equations (\ref{123A0}),(\ref{123A1}),(\ref{123A}) for $A_0$, $A_1$ and $A$ gives the formula for $A$ in this theorem. Notice that $A$ is a root of some quadratic equation.
\end{proof}

Setting $x=0$ and $y=1$ in $\AAxyt{123}{321}$, the coefficient list in the Taylor series expansion of $\AAAover{123}{321}(t,1,0)$ is
$\{1, 1, 2, 4, 9, 23, 63, 178, 514, \ldots\}$
which is the same as entry A059347 in the OEIS \cite{OEIS} after the third term.

\subsection{The function $\AAxyt{132}{123}$}
In this subsection, we shall study the distribution of consecutive $123$-matches in the permutations in $\Snn(132)$. Similar to the previous subsections, we can apply the bijection $\Phi$ of Krattenthaler \cite{Kr} to translate the consecutive patterns in permutations  $\sg\in\Snn(132)$ into path patterns in Dyck paths $\Phi(\sg)$. The cases in $\Snn(132)$ tend to be more straightforward, and we can find the generating function $A$ directly without defining the auxiliary functions  $A_0$ or $A_1$.

Given any permutation $\sg\in\Snn(132)$, \lref{p1} implies the descent positions of $\sg$ only appear between two consecutive horizontal segments in the corresponding Dyck path $\Phi(\sg)$,
thus the number of descents of $\sg$ is one less than the number of horizontal segments in $\Phi(\sg)$, i.e.\ $\des(\sg)$ is equal to the number of occurrences of the path pattern $RD$ in $\Phi(\sg)$. For the same reason, the consecutive pattern $123$ corresponds to the path pattern $RRR$. Now we are ready to prove the following theorem.

\begin{thm}\label{thm:4}
	The generating function $\AAxyt{132}{123}$ is given by
	\begin{multline}\label{132123A}
	\AAxyt{132}{123}=\\\frac{\sqrt{x^2 t^2+2 x t (t y-1)+t^2 (y-4) y-2 t y+1}+x t
		\left(2 t y^2-2 (t+1) y+1\right)-2 t^2 (y-1) y+t y-1}{2 t y (x (t y-1)-t y)},
	\end{multline}
	and
	\begin{equation}\label{thm4eq}
	\AAAover{132}{123}(t,1,x)|_{t^nx^k}= \frac{1}{n+1} \sum_{i=0}^{\lfloor\frac{n}{2}\rfloor} \sum_{j=0}^{n+1-i} (-1)^{2j} \binom{n+1}{i} \binom{i+k-1}{k} 
	\binom{2n-3i-2j-k}{n-i} .
	\end{equation}
\end{thm}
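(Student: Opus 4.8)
The plan is to transport the problem to Dyck paths via Krattenthaler's bijection $\Phi$, write down a functional equation for $A:=\AAxyt{132}{123}$ by expanding the last horizontal segment (Recursion~1), solve the resulting quadratic, and then obtain (\ref{thm4eq}) by extracting coefficients from the specialisation at $y=1$. To begin, by \lref{p1} and the discussion just before the theorem, under $\Phi$ the statistic $\des(\sg)$ becomes the number of occurrences of the factor $RD$ in $\Phi(\sg)$ and $123\text{-mch}(\sg)$ becomes the number of occurrences of the factor $RRR$. Hence $A=\sum_{n\ge0}t^n\sum_{P\in\Dn}y^{RD\text{-mch}(P)}x^{RRR\text{-mch}(P)}$, and it suffices to enumerate Dyck paths weighted by $t$ for size, $y$ for each $RD$-factor, and $x$ for each $RRR$-factor.

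Next, I would expand the last horizontal segment: a Dyck path whose last segment has length $k\ge1$ is uniquely of the form $P_k\,D\,P_{k-1}\,D\cdots D\,P_1\,D\,R^k$ with $P_1,\dots,P_k$ arbitrary (possibly empty) Dyck paths --- these are the $k$ substructures of Recursion~1. The trailing run $R^k$ contributes $t^k$ to the size and $x^{\max(k-2,0)}$ to the $RRR$-count, and it is a maximal run of $R$'s, so no $RRR$-factor crosses any internal boundary; on the other hand each \emph{nonempty} $P_i$ ends in an $R$ immediately followed by a $D$, contributing exactly one extra $RD$-factor, while an empty $P_i$ contributes nothing. Writing $\widetilde A:=1+y(A-1)$ for the weight of one such slot, the case ``last segment of length $k$'' contributes $t^k x^{\max(k-2,0)}\widetilde A^{\,k}$, so
\begin{equation*}
A=1+t\widetilde A+t^2\widetilde A^{\,2}+\sum_{k\ge3}t^k x^{k-2}\widetilde A^{\,k}=1+t\widetilde A+\frac{t^2\widetilde A^{\,2}}{1-tx\widetilde A}.
\end{equation*}
Clearing the denominator and using the relation $\widetilde A=1+y(A-1)$ to eliminate $A$, this is equivalent to $\bigl(tx+yt^2(1-x)\bigr)\widetilde A^{\,2}-(1+tx-yt)\widetilde A+1=0$; since $\widetilde A$ is a linear function of $A$, this is a quadratic for $A$ with coefficients in $\mathbb{Z}[t,y,x]$. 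Exactly one of its two roots is a power series in $t$ with constant term $1$, and a short series expansion identifies it with the expression in (\ref{132123A}); one also checks directly that the discriminant equals $x^2t^2+2xt(ty-1)+t^2(y-4)y-2ty+1$.

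For (\ref{thm4eq}) I would set $y=1$, so that $\widetilde A=A$ and $A(t,1,x)$ is the power-series root of $\bigl(tx+t^2(1-x)\bigr)A^2-(1+tx-t)A+1=0$, namely $A(t,1,x)=\dfrac{1+tx-t-\sqrt{(1+tx-t)^2-4(tx+t^2(1-x))}}{2(tx+t^2(1-x))}$. The coefficient $\AAAover{132}{123}(t,1,x)|_{t^nx^k}$ is then extracted by applying the generalised binomial theorem to the square root $\sqrt{1-u}$ (the exponent in that expansion becoming the index $i$, and the standard reduction of the central-binomial-type coefficient producing the factor $\tfrac{1}{n+1}\binom{n+1}{i}$), followed by ordinary binomial expansions of the resulting integer powers of the few monomials of $u$ and of $(1+tx-t)^{-1}$: one of those expansions gives the index $j$, the factor $\binom{i+k-1}{k}$ records how the $k$ copies of $x$ are distributed among the factors that carry them, and $\binom{2n-3i-2j-k}{n-i}$ collects what remains once the total $t$-degree is matched to $n$. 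Equivalently, one reads $\AAAover{132}{123}(t,1,x)|_{t^nx^k}$ as the number of $P\in\Dn$ with exactly $k$ factors $RRR$ and counts such lattice words directly, stars-and-bars distributing the excess right-steps over the long segments and the cycle lemma supplying the $\tfrac{1}{n+1}$. The main obstacle is precisely this last step --- keeping track of which nested binomial expansion produces which factor and simplifying the multi-sum to the stated shape, the vestigial $(-1)^{2j}=1$ being the trace of a sign that cancels in an intermediate expression; as with (\ref{thm1eq}) in Section~5.1, this computation could be deferred to a later section, the genuinely new content being the functional equation and the branch selection above.
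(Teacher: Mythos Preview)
Your derivation of the functional equation and the closed form (\ref{132123A}) is correct and matches the paper's proof exactly: same bijection $\Phi$, same translation of statistics (descents $\leftrightarrow RD$, $123$-matches $\leftrightarrow RRR$), same Recursion~1 expansion of the last horizontal segment, arriving at the paper's equation (\ref{13212333}) and the same quadratic.

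For the coefficient formula (\ref{thm4eq}) your route diverges from the paper's, and your sketch is not a proof. The paper does \emph{not} expand the square root in the closed form; instead, in Section~6.1 (specialising the general case $\BBxt{132}{1\cdots m}$ to $m=3$), it applies the Lagrange Inversion Theorem directly to the functional equation at $y=1$: writing $F=tA$, the recursion becomes $F=t\,\delta(F)$ with $\delta(z)=\dfrac{1-z^{2}}{1-z}+\dfrac{z^{2}}{1-xz}$, so that $A|_{t^{n}}=\tfrac{1}{n+1}\,\delta(z)^{n+1}\big|_{z^{n}}$. Expanding $\delta^{\,n+1}$ binomially in its two summands produces the index $i$ and the factor $\binom{n+1}{i}$; the geometric series $(1-xz)^{-i}$ produces $\binom{i+k-1}{k}$; and expanding the remaining $(1-z^{2})^{n+1-i}(1-z)^{-(n+1-i)}$ supplies the index $j$, the sign $(-1)^{2j}$, and the last binomial coefficient. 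The $\tfrac{1}{n+1}$ is simply the Lagrange factor, not something to be recovered from a central-binomial reduction or a cycle-lemma argument. Your proposed route via the generalised binomial expansion of $\sqrt{1-u}$ might in principle be pushed through, but the passages about which expansion ``gives the index $j$'' and what ``collects what remains'' are exactly the gap you yourself flag; Lagrange inversion on the functional equation removes that gap and explains each piece of (\ref{thm4eq}) mechanically.
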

\begin{proof}
	According to the previous discussion, $A$ is a generating function of Dyck paths, where for each path $P$, $t$ tracks the size of $P$, $y$ tracks the number of $RD$-matches in $P$, and $x$ tracks the number of $RRR$-matches in $P$. Thus, a horizontal segment of size $1$ or $2$ contributes nothing to the power of $x$, and a horizontal segment of size $k\geq3$ contributes $k-2$ to the power of $x$. This allows us to derive a recursive equation about $A$ by expanding the last horizontal segment.
	
	Now we assume that $P$ is a Dyck path where the last horizontal segment is of size $k$. When $k$ is $0$, $P$ is the empty path which has a contribution of $1$ to the function $A$. When $k=1$, $P$ ends with steps $DR$ and there is a Dyck path structure $P_1$ above the last horizontal segment with a weight of $(y(A-1)+1)$ since there is an extra descent between $P_1$ and the last segment if $P_1$ is not empty. When $k\geq2$, the path ends with steps $DR^k$. The last segment has a weight of $t^k x^{k-2}$ since it contains $k-2$ pattern $RRR$-matches. There are $k$ Dyck path structures $P_1,\ldots, P_k$ above the last segment, and each $P_i$ has a weight of $(y(A-1)+1)$ since $P_i$ will create an extra descent with the horizontal segment following $P_i$ if $P_i$ is not empty. Thus the contribution of the case when $k\geq2$ is $t^kx^{k-2}(y(A-1)+1)^k$. It follows that
	\begin{eqnarray}\label{13212333}
	A&=&1+ t(y(A-1)+1)+ t^2(y(A-1)+1)^2 +t^3x(y(A-1)+1)^3+\cdots\nonumber\\
	&=&1+ t(y(A-1)+1)+ \frac{t^2(y(A-1)+1)^2}{1-tx(y(A-1)+1)}.
	\end{eqnarray}
	Equation (\ref{13212333}) can be adjusted into a quadratic equation about $A$, and one can solve the quadratic equation to prove equation (\ref{132123A}).
	
	We shall prove equation (\ref{thm4eq}) in Section 6.1.
\end{proof}

Setting $x=0$ and $y=1$ in $\AAxyt{132}{123}$, we have
$A_{132}^{\underline{123}}(t,1,0)=A_{123}^{\underline{132}}(t,1,0)$, which is again the generating function of Motzkin numbers (entry A001006 in the OEIS \cite{OEIS}). 

We will generalize our method to compute $A_{132}^{\underline{1\cdots m}}(t,y,x)$ in Section 6.1.

\subsection{The function $\AAxyt{132}{213}$}
\begin{thm}\label{thm:6}
	The generating function $\AAxyt{132}{213}$ is given by
	\begin{multline}
	\AAxyt{132}{213}=\\\frac{-\sqrt{t \left(2 y \left(x (t-1) t-t^2-1\right)+t y^2 (-x
			t+t+1)^2+t-2\right)+1}+(x-1) t^2 y (2 y-1)+t (y-1)+1}{2 t y ((x-1) t
		y+1)}.
	\end{multline}
\end{thm}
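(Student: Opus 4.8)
The plan is to mimic the proof of Theorem \ref{thm:4} exactly, transporting everything through the appropriate symmetry. First I would recall from Lemma \ref{lem:1321} with $k=3$ (equation (\ref{sym4})) that
\[
\AAAover{132}{312}(t,y,x)=1+\frac{\AAAover{132}{213}(ty,\tfrac{1}{y},x)-1}{y},
\]
so that $\AAAover{132}{213}(t,y,x)$ is recoverable from $\AAAover{132}{312}(t,y,x)$ by the inverse substitution $t\mapsto t/y$, $y\mapsto 1/y$. Thus it suffices to compute $\AAAover{132}{312}(t,y,x)$, or equivalently $\AAAover{312}{312}(t,y,x)$ after applying Lemma \ref{lem:rcrc} with $\la=132$ ($=312^c$, so $\AAAover{132}{\ga}$ relates to $\AAAover{312}{\ga^{c^{-1}}}$ with the $r$/$c$ flips). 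In practice the cleaner route is to work directly in $\Snn(132)$ via Krattenthaler's bijection $\Phi$ and Lemma \ref{lem:p1}: translate a consecutive $213$-match of $\sg$ into a path pattern of $\Phi(\sg)$, then set up a recursion on the last horizontal segment just as in Theorem \ref{thm:4}.

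Concretely, by Lemma \ref{lem:p1} the numbers on a horizontal segment of $\Phi(\sg)$ form a consecutive increasing run in $\sg$, with the least number sitting above the first $R$-step of the segment, and descents occur exactly between consecutive segments. A consecutive $213$-pattern $\sg_i\sg_{i+1}\sg_{i+2}$ has $\sg_{i+1}<\sg_i<\sg_{i+2}$, i.e.\ a descent followed by an ascent with the third element larger than the first. In path terms this forces $\sg_i$ to be the last element of one segment, $\sg_{i+1}$ the first (least) element of the next segment, and $\sg_{i+2}$ the second element of that same next segment, with the additional size constraint making $\sg_i<\sg_{i+2}$ automatic once the segment containing $\sg_{i+1},\sg_{i+2}$ has length $\ge 2$; so a consecutive $213$-match corresponds to an occurrence of the path pattern $R\,D\,R\,R$ (a segment boundary immediately followed by at least two more $R$-steps), i.e.\ each non-first horizontal segment of length $\ge 2$ contributes exactly one $x$. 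I would verify this correspondence carefully by the same placement argument used for $RRR\leftrightarrow 123$ in Theorem \ref{thm:4}.

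Then I would derive the functional equation by expanding the last horizontal segment of $\Phi(\sg)$, exactly as in equation (\ref{13212333}). Writing $A$ for $\AAxyt{132}{213}$: the empty path contributes $1$; a last segment of size $1$ contributes $t(y(A-1)+1)$ (one Dyck substructure above, weight $y(A-1)+1$ for the possible extra descent); a last segment of size $k\ge 2$ contributes $t^k\,x^{\,k-1}\,(y(A-1)+1)^k$ if the last segment itself produces $k-1$ copies of the pattern — but one must be careful because a \emph{last} segment behaves differently from an interior one (there is no segment after it to form the descent of the next $213$), so the count of $x$'s from the last segment is $k-1$ only if the match is read with the \emph{preceding} boundary; I expect the correct bookkeeping to give the last segment of size $k\ge 2$ a weight $t^k x^{k-1}(y(A-1)+1)^k$ with one boundary descent already absorbed, leading to
\[
A = 1 + t\bigl(y(A-1)+1\bigr) + \frac{t^2 x\bigl(y(A-1)+1\bigr)^2}{1 - t x\bigl(y(A-1)+1\bigr)}.
\]
Clearing denominators gives a quadratic in $A$, and solving it (taking the root with $A(0,y,x)=1$) should produce the stated closed form; I would then expand in a computer algebra system to match the radicand $t\left(2 y \left(x (t-1) t-t^2-1\right)+t y^2 (-x t+t+1)^2+t-2\right)+1$ and the rational part.

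The main obstacle I anticipate is getting the exact exponent of $x$ and the treatment of the first and last segments right: unlike the $123$ case where the pattern $RRR$ lives entirely inside one segment, the $213$ pattern straddles a segment boundary, so I must track whether the leftmost segment (which has no boundary to its left) and the rightmost segment (no boundary to its right) are excluded, and whether a segment of length $k\ge 2$ contributes $k-1$ or $k-2$ to the power of $x$. The safest way to pin this down is to compute $\AAxyt{132}{213}$ by brute force for $n\le 5$ or $6$, compare with the proposed quadratic's series expansion, and adjust the weights $t^k x^{?}(y(A-1)+1)^k$ until they agree; once the recursion is confirmed, solving the quadratic is routine. Alternatively, one can bypass the combinatorics entirely by taking $\AAAover{132}{231}(t,y,x)$ (computed in Section 3.6 of the paper) or $\AAAover{132}{312}$ and applying the substitution from equation (\ref{sym4}) together with Lemma \ref{lem:rcrc}, which I would use as an independent check on the final formula.
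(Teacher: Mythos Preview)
Your overall plan---use Krattenthaler's bijection $\Phi$ and Recursion~1 on the last horizontal segment---is exactly what the paper does, and your path-pattern guess $RDRR$ is even correct (this is Theorem~\ref{thm:general}(a) applied to $\gamma=213$, since $\Phi'(213)=RDRR$). But you then misread $RDRR$: you equate it with ``each non-first horizontal segment of length $\ge 2$'', i.e.\ with $R\,D^{+}\,R\,R$, and you assert that $\sigma_i<\sigma_{i+2}$ is ``automatic'' once the next segment has length $\ge 2$. That claim is false. Take $\sigma=312$ (path $DRDDRR$): the second segment has length $2$, yet $\sigma_1\sigma_2\sigma_3=312$, not $213$. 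More generally $\sigma=3412$ or $\sigma=52314$ show that a segment boundary with $\ge 2$ down-steps produces a $312$ at that junction, not a $213$. Your proposed recursion inherits this error: already at $t^2$ it yields $y+x$, whereas $\Sn{2}(132)=\{12,21\}$ forces the coefficient to be $1+y$.

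The point you are missing is \emph{where} the extra $x$ sits in the recursion. With the last segment of size $k\ge 2$ and substructures $P_1,\ldots,P_k$ above it, an $RDRR$ involving the last segment occurs precisely when there is a single $D$ before the final $R^k$; in the decomposition this is exactly the condition that $P_k$ is nonempty. The boundaries $P_i$--$P_{i+1}$ for $i<k$ can never produce an $RDRR$, because the separator $D$ together with the first $D$ of the next nonempty block gives at least two consecutive $D$'s there. Hence for $k\ge 2$ the correct weight is
\[
t^k\,(y(A-1)+1)^{\,k-1}\,(xy(A-1)+1),
\]
with $P_1,\ldots,P_{k-1}$ each contributing $y(A-1)+1$ and $P_k$ contributing $xy(A-1)+1$ (the $x$ marking the $213$ created only when $P_k\neq\varnothing$). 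Summing gives the paper's recursion
\[
A=1+t\bigl(y(A-1)+1\bigr)+\frac{t^2\bigl(y(A-1)+1\bigr)\bigl(xy(A-1)+1\bigr)}{1-t\bigl(y(A-1)+1\bigr)},
\]
which is quadratic in $A$ and solves to the stated closed form.
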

\begin{proof}
	Similar to the discussion in Section 3.4, we shall still use the first Dyck path recursion to compute $\AAxyt{132}{213}$. Given any Dyck path $P$ where the last horizontal segment is of size $k$, we shall expand the last segment as shown is \fref{132c213}. 
	By \lref{p1}, there is a length $k$ increasing sequence lying above the last horizontal segment, and there are $k$ Dyck path structures $P_1,\ldots,P_k$ above the last segment. Other than inside each of the $k$ Dyck path structures, the only possible location of consecutive $213$-match is at the junction of the last Dyck path structure $P_k$ (if $P_k$ is nonempty) and the last segment (if $k$ is at least 2) which is in the circled area of \fref{132c213}. 
	\begin{figure}[ht]
		\centering
		\begin{tikzpicture}[scale =.4]
		\draw[very thick] (5.5,2) -- (5.5,5.5) --(2,5.5);
		\draw[very thick] (6.5,2) -- (6.5,8.5) --(0,8.5);
		\fill[black!30!white] (4,2) rectangle (5,3);
		\fill[black!30!white] (5,5) rectangle (6,6);
		\fill[black!30!white] (6,8) rectangle (7,9);
		\draw (0,9)--(0,6)--(2,6)--(2,3)--(4,3)--(4,2)--(7,2)--(-2,11)--(-2,9)--(0,9)--(0,8)--(6,2)--(5,2)--(2,5);
		\draw[thick, red] (4,4.2) circle (2.2);
		\node at (.5,6.5) {\footnotesize $\cdots$};
		\node at (2.5,3.5) {\footnotesize $P_k$};
		\node at (-1.5,9.5) {\footnotesize $P_1$};
		\end{tikzpicture}
		\vspace{-5mm}
		\caption{using Recursion 1 to compute $\AAxyt{132}{213}$.}
		\label{fig:132c213}
	\end{figure}
	
	The total contribution of the cases when $k$ is $0$ or $1$ is still $1+t(y(A-1)+1)$. When $k$ is at least 2, the weight of each $P_i$ for $i=1,\ldots,k-1$ is $(y(A-1)+1)$. If $P_k$ is not empty, then the total weight of $P_k$ and the last segment is $t^kxy(A-1)$ since $P_k$ and the last segment create an extra consecutive $213$-match; if $P_k$ is empty, then the weight of $P_k$ and the last segment is $t^k$. Thus, the total weight of $P_k$ and the last segment is $t^k(xy(A-1)+1)$, and the contribution of the case when the last segment is of size $k\geq2$ is $t^k(xy(A-1)+1)(y(A-1)+1)^{k-1}$.
	
	Summing over all the cases, we have the following recursion for $A$:
	\begin{eqnarray}\label{132213}
	A&=&1+ t(y(A-1)+1)+t^2(y(A-1)+1)(x y(A-1)+1) \nonumber\\
	&&+t^3(y(A-1)+1)^2(x y(A-1)+1) +\cdots\nonumber\\
	&=&1+ t(y(A-1)+1)+\frac{t^2(y(A-1)+1)(x y(A-1)+1)}{1-t(y(A-1)+1)}.
	\end{eqnarray}
	Equation (\ref{132213}) can be adjusted into a quadratic equation about $A$, and one can solve the quadratic equation to prove \tref{6}.
\end{proof}
Setting $x=0$ and $y=1$ in $\AAxyt{132}{213}$, the coefficient list in the Taylor series expansion of $\AAAover{132}{213}(t,1,0)$ is
$\{1,1, 2, 4, 9, 22, 57, 154, 429, 1223, \ldots\}$
which is entry  A105633 in the OEIS \cite{OEIS}.

\subsection{The function $\AAxyt{132}{231}$}
\begin{thm}\label{thm:5}
	The generating function
	\begin{equation}\label{5A}
	\AAxyt{132}{231}=\frac{2 t x y-t y-t+1-\sqrt{-4 t^2 x y+t^2 y^2+2 t^2 y+t^2-2 t y-2 t+1}}{2 t x y},
	\end{equation}
	and
	\begin{equation}\label{thm5eq}
	\AAAover{132}{231}(t,1,x)|_{t^nx^k}=\frac{1}{n} \binom{n}{k} \sum_{i=0}^{n-k} (-1)^{4k+i+n+1}\binom{2i-n}{n+1-4k}.
	\end{equation}
\end{thm}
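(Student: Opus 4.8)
My plan is to follow the template of \tref{4} and \tref{6}: use Krattenthaler's bijection $\Phi$ to translate a consecutive $231$-match into a Dyck path pattern, build a functional equation from the first Dyck path recursion, solve the resulting quadratic for $A=\AAxyt{132}{231}$, and then read off coefficients by Lagrange inversion. First I would identify the path pattern. By \lref{p1} the horizontal segments of $P=\Phi(\sg)$ are precisely the maximal increasing runs of $\sg\in\Snn(132)$, so $\des(\sg)$ equals the number of $RD$-matches of $P$. A triple $\sg_i\sg_{i+1}\sg_{i+2}$ is a consecutive $231$-match exactly when $\sg_i<\sg_{i+1}$ — which forces $\sg_i$ and $\sg_{i+1}$ into a common segment $H$, with $\sg_{i+1}$ its last entry — and $\sg_{i+1}>\sg_{i+2}$ — which forces $\sg_{i+2}$ to be the first entry of the next segment $H'$; the remaining inequality $\sg_{i+2}<\sg_i$ is then automatic, since the first entry of a segment is a left-to-right minimum and hence smaller than every earlier entry. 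Thus a consecutive $231$-match is the same thing as a horizontal segment of length $\ge 2$ that is not the last segment of $P$, i.e.\ $231\text{-mch}(\sg)$ equals the number of $RRD$-matches of $P$.

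The recursion is where this case diverges from \tref{4} (where $123\leftrightarrow RRR$ lives entirely inside one segment): an $RRD$-match uses the $D$-step immediately following a segment of length $\ge2$, so when a sub-path is stacked above a horizontal segment in the first Dyck path recursion, a last segment of length $\ge2$ in that sub-path creates an $RRD$-match that the sub-path's own series does not record. I would therefore introduce the auxiliary series $\tilde A(t,y,x)=\sum_{n\ge0}t^n\sum_{P\in\Dn}y^{\,a(P)}x^{\,b(P)}$, where $a(P)$ is the number of $RD$-matches of $P$ and $b(P)$ is the number of \emph{all} horizontal segments of $P$ of length $\ge2$, so that for each path the $x$-exponent in $\tilde A$ exceeds the number of $RRD$-matches by $1$ exactly when the final segment has length $\ge2$. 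Writing $B:=y(\tilde A-1)+1$ for the weight of a stacked sub-path — which now absorbs both the descent the sub-path creates with the segment below it and the boundary $RRD$-match it may create — and expanding the last segment gives the pair
\[ \tilde A=1+tB+\frac{t^2xB^2}{1-tB},\qquad A=1+tB+\frac{t^2B^2}{1-tB}, \]
the only difference being that the last segment of the whole path is followed by no $D$-step and so carries no $x$. From the second identity $A(1-tB)=1$, hence $tB=(A-1)/A$; substituting this both into $B=y(\tilde A-1)+1$ and into the first identity eliminates $\tilde A$ and $B$ and leaves the quadratic
\[ txy\,A^2+(t-1+ty-2txy)\,A+(1-ty+txy)=0 . \]
Solving it, keeping the branch with $A=1+O(t)$, and checking that the discriminant simplifies to $-4t^2xy+t^2y^2+2t^2y+t^2-2ty-2t+1$, yields formula (\ref{5A}).

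For the coefficient identity (\ref{thm5eq}) I would set $y=1$ and put $G:=A-1$; the quadratic then reads $G=t\bigl(1+2G+xG^2\bigr)$, which is already in the Lagrange form $G=t\,\Phi(G)$ with $\Phi(u)=1+2u+xu^2$. Lagrange inversion gives $[t^n]G=\tfrac1n[u^{n-1}](1+2u+xu^2)^n$, and selecting the multinomial term with $k$ factors $xu^2$, $n-1-2k$ factors $2u$ and $k+1$ factors $1$ yields $[t^nx^k]\AAAover{132}{231}(t,1,x)=\tfrac1n\binom{n}{k+1,\ n-1-2k,\ k}\,2^{\,n-1-2k}$ for $n\ge1$. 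Rewriting one of the three factors of $(1+2u+xu^2)^n$ by the binomial theorem — equivalently, applying the generalized binomial theorem to the radical in (\ref{5A}) — recasts this into the alternating-sum shape of (\ref{thm5eq}).

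I expect the genuine obstacle to be the bookkeeping in the recursion step: one must verify that every $RRD$-match of $P$ is counted once and only once when $P$ is reassembled from its stacked sub-paths and its final segment, so that $\tilde A$, $B$ and $A$ stand in exactly the claimed relations. Once that is settled, solving the quadratic and performing the Lagrange-inversion coefficient extraction are routine.
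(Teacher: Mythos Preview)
Your derivation of (\ref{5A}) is correct. You take the route of the paper's Remark --- the first Dyck path recursion with an auxiliary series in which the last segment is treated as interior --- rather than the paper's main proof, which simply breaks $\sg\in\Snn(132)$ at the position of~$n$ and reads off $A=1+t\bigl(A+y(A-1)\bigr)+txy(A-1)^2$ in one line. Both reach the same quadratic; your bookkeeping with $\tilde A$ and $B=y(\tilde A-1)+1$ is sound, and your relation $A=1/(1-tB)$ quietly repairs a slip in the Remark, which writes $A=1+\dfrac{tyA_1}{1-tA_1}$ (compare the $t^1$-coefficient: that expression gives $y$, not $1$).

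For the coefficient part your Lagrange inversion on $G=A-1$ at $y=1$ is the right move, and the closed form
\[
[t^nx^k]\AAAover{132}{231}(t,1,x)=\frac{1}{n}\binom{n}{k+1,\ n-1-2k,\ k}\,2^{\,n-1-2k}
\]
is correct. The step you leave vague --- ``recasting into the alternating-sum shape of (\ref{thm5eq})'' --- is where the plan breaks down, and not because you are missing a routine identity: formula (\ref{thm5eq}) as printed is in error. At $(n,k)=(3,0)$ it evaluates to $14/3$ (the correct value is $4$), and at $(n,k)=(4,1)$ it gives $4$ instead of $6$. Tracing the computation in Section~6.4, after extracting $x^k$ from $\bigl((z-1)^{m-1}x+(z^{m-1}-(z-1)^{m-1})\bigr)^n$ both the factor $(z-1)^{(m-1)k}$ and the binomial $\binom{n-k}{i}$ from expanding $(z^{m-1}-(z-1)^{m-1})^{n-k}$ are dropped between consecutive displayed lines; restoring them yields an alternating sum that does match your closed form, but not the stated (\ref{thm5eq}). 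Your argument proves the correct coefficient formula; there is nothing further to recast.
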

\begin{proof}
	The best way we know to prove this theorem is to use the structure of 132-avoiding permutations (implied by the second Dyck path recursion). Given any permutation $\sg\in\Snn(132)$ such that the number $n$ is in the \thn{k} position, we break $\sg$ before and after the position of $n$. The numbers to the left of $n$ form a 132-avoiding permutation $P_1$ of the set $\{n-k+1,\ldots,n-1\}$, and the numbers to the right of $n$ form a 132-avoiding permutation $P_2$ of the set $\{1,\ldots,n-k\}$, which is shown in \fref{132perm}.
	\begin{figure}[ht]
		\centering
		\vspace{-1mm}
		\begin{tikzpicture}[scale =.5]
		\draw (0,0) rectangle (6.4,6.2);
		\draw[very thick] (0,2) rectangle (4,6);
		\draw[very thick] (4.4,0) rectangle (6.4,2);
		\draw[fill] (4.2,6.2) circle [radius=0.2];
		\filll{2}{4}{P_1};\filll{5.4}{1}{P_2};
		\draw (4.2,6.7) node {\footnotesize $n$};
		\end{tikzpicture}
		\caption{breaking a permutation $\sg\in\Snn(132)$ at the position of $n$.}
		\label{fig:132perm}
	\end{figure}
	
	The case when $P_2$ is empty contributes $tA$ to the generating function $A$, and the case when $P_1$ is empty while $P_2$ is not empty contributes $ty(A-1)$ to $A$ since there is a descent between $n$ and $P_2$. If neither $P_1$ nor $P_2$ is empty, then a consecutive pattern $231$ appears at the position of $n$, and the contribution of this case is $txy(A-1)^2$. Thus we have:
	\begin{equation}\label{132231}
	A=1+t(A+y(A-1))+txy(A-1)^2.
	\end{equation}
	Equation (\ref{5A}) can be obtained by solving the quadratic equation (\ref{132231}).
	
	We shall prove equation (\ref{thm5eq}) in Section 6.4.
\end{proof}

{\bf Remark. }We can also use the first Dyck path recursion to compute $\AAxyt{132}{231}$ which will help in Section 4.2 when we compute a generating function tracking multiple patterns.  According to \lref{p1}, the number of consecutive $231$-matches in $\sg\in\Snn(132)$ is equal to the number of horizontal segments of $\Phi(\sg)$ of size at least 2 which are not the last segment.
Let $A_1$ be the generating function that we suppose the last segment is an interior segment similar to (\ref{A1def}), then by expanding the last segment of a path, we have the following recursion for $A_1$:
\begin{equation}\label{132231a}
A_1=1+tyA_1+t^2x yA_1^2 + t^3xyA_1^3+\cdots=1+tyA_1+\frac{t^2xyA_1^2}{1-tA_1}.
\end{equation}
For the function $A$, we use the first Dyck path recursion  again to obtain
\begin{equation}\label{132231b}
A=1+tyA_1+t^2yA_1^2 + \cdots=1+\frac{tyA_1}{1-tA_1}.
\end{equation}
One can solve equations (\ref{132231a}) and (\ref{132231b}) to prove \tref{5}.

Setting $x=0$ and $y=1$ in $\AAxyt{132}{231}$, the coefficient list $\{a_n\}_{n\geq 0}$ in the Taylor series expansion is given by $a_0=1,\ a_n=2^{n-1}$ if $n>0$, which is the number of permutations with no peaks. 

We will generalize our method to compute $A_{132}^{\underline{2\cdots m1}}(t,1,x)$ in Section 6.4.

\section{Tracking multiple patterns --- $\AAA{\la}{\underline{\{\ga_1,\ldots,\ga_s\}}}{t,y,x_1,\ldots,x_s}$}

As we mentioned in Section 2, $\la\text{-mch}(\sg)=0$ for any $\sg\in\Snn(\la)$. Since $|\Sn{3}|=6$, permutations in $\Snn(\la)$ for $\la\in\Sn{3}$ only contain $5$ kinds of consecutive patterns of length $3$ except $\la$. Suppose that $\Sn{3}=\{\la,\ga_1,\ga_2,\ga_3,\ga_4,\ga_5\}$ and $\sg\in\Snn(\la)$. 
If $\ga_i\text{-mch}(\sg)=\alpha_i$ for $i=1,\ldots,5$, then  $\alpha_5=n-2-\alpha_1-\alpha_2-\alpha_3-\alpha_4$ since any permutation of length $n$ has $n-2$ consecutive patterns of length $3$. 

Thus for any $\la\in\Sn{3}$, if we have obtained a generating function for $\Snn(\la)$ while tracking $4$ consecutive patterns of length $3$, then we can modify the function to get a new generating function tracking all the $5$ patterns of length $3$. By symmetry, we only need to consider the cases when $\la=123$ or $\la=132$.

When $\la=132$, we are able to compute the generating function tracking all the $5$ patterns of length $3$ and the descent statistic. When $\la=123$, we have the recursion for the generating function tracking $3$ patterns of length $3$ and the descent statistic.

\subsection{The function $\AAA{123}{\underline{\{132,231,321\}}}{t,y,x_1,x_2,x_3}$}

\begin{thm}\label{thm:7}
	The function $\AAA{123}{\underline{\{132,231,321\}}}{t,y,x_1,x_2,x_3}$ satisfies the recursion
	\begin{multline}\label{123}
	A=1/(x_3 (x_1-x_2) (A_0 x_3^2
	t^2 y^2 (A_0 t^2 y
	(x_1-x_2)-1)-(A_0+1) x_3 t y
	(A_0 t^2 y (x_1-x_2)-1)
	+A_0
	x_1 t^2 y-1))\\
	\cdot(x_1 (x_3^2 t y (A_0^2 t^2
	(x_2 t^2 y^2+y (3 x_2 t+2
	x_2+t+1)+1)+A_0 ((x_2+1) t^3 y+t^2 (2
	x_2 y+y+2)+t+1)\\
	+t+1)+x_2 t^2 (A_0^2
	(-t) y+A_0-1)+A_0^2 x_3^4 t^5 y^3-A_0
	x_3^3 t^2 y^2 (A_0 (x_2+1) t^3 y+t^2 (A_0
	(2 x_2 y+y+2)+1)\\
	+t+1)
	+x_3 (A_0 x_2
	t^4 y (A_0-y)-A_0 t^3 y (A_0
	x_2+x_2+1)-A_0 t^2 (x_2
	y+y+1)-t-1))
	+x_2 (-A_0^2 x_3^4 t^5
	y^3\\
	-x_3^2 t y (A_0^2 (2 t-1) t^2 y+A_0 (t^3
	y+t^2 (y+2)+t+1)+t+1)
	+x_3 (t^2 (A_0^2
	(-y)+y+1)+A_0 (A_0+1) t^3 y+t+1)\\
	+A_0
	x_3^3 t^2 y^2 (A_0 t^3 y+(A_0+1)
	t^2+t+1)
	+(A_0-1) t)+x_3 t (x_3 t y-1)
	(A_0^2 x_3 t y (x_3 t
	y-1)+A_0-1)\\
	+A_0 x_1^2 t^2 y (-A_0
	x_2 t^2+A_0 x_3^3 t^2 y^2-(A_0+1) x_3^2 t
	y+x_3)
	+A_0 x_2^2 x_3 t^3 y
	(A_0 x_3^2 t (t+1) y^2\\
	-x_3 y (A_0 t^2
	y+2 A_0 t+A_0+t+1)+(A_0+1) t
	y+1)),
	\end{multline}
	where
	\begin{equation}\label{1231A0}
	A_0=1+ t x_3 y A_0 + t^2 x_1 y A_0^2 + \frac{ t^3 x_2 x_3 y^2 A_0^3}{1 -  t x_3 y A_0}.
	\end{equation}
\end{thm}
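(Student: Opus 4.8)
The plan is to mimic, almost step for step, the proof of \tref{3} (the single consecutive pattern $321$ case), since $321$ is one of the three patterns being tracked and it is the only one of the three whose occurrences do not correspond to a single step-pattern of the associated Dyck path. First I would transfer everything to Dyck paths via the Deutsch--Elizalde bijection $\Psi:\Snn(123)\rightarrow\Dn$. For $\sg\in\Snn(123)$ with $P=\Psi(\sg)$, \lref{p2} supplies the dictionary we need: the descents of $\sg$ are the $\{RD,RRR\}$-matches of $P$; the consecutive $132$-matches of $\sg$ are the $DRRR$-matches of $P$, equivalently the horizontal segments of $P$ of size at least $3$; the consecutive $231$-matches of $\sg$ are the $DRRD$-matches of $P$, equivalently the non-last horizontal segments of size exactly $2$; and the consecutive $321$-matches of $\sg$ are a sum of per-segment contributions depending only on the size and the type (first, interior, or last) of each segment, as recorded in \taref{123c321} and \taref{123A0}. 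Thus $A$ becomes a generating function over $\Dn$ in which $t$ marks the size, $y$ marks the number of $\{RD,RRR\}$-matches, and the three variables $x_i$ mark the three segment statistics just listed.

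As in the proof of \tref{3}, the key device is a pair of auxiliary series: $A_0$, in which every horizontal segment is treated as an interior segment (equivalently, phantom $R$-steps are prepended to the path and a phantom $D$-step is appended), and $A_1$, in which only the last segment is treated as interior (a phantom $D$ is appended). In both, a horizontal segment of size $k$ now carries the monomial given by its \taref{123A0} entry in $y$ and $x_3$, multiplied by an extra $x_2$ when $k\geq 3$ (the $132$-factor) and by an extra $x_1$ when $k=2$ and the segment is not the last one (the $231$-factor). Three recursions then follow. Expanding the last horizontal segment of a path by the first Dyck-path recursion, and using that all segments counted by $A_0$ are interior, yields precisely equation (\ref{1231A0}) for $A_0$. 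Expanding the last horizontal segment before the first return by the third Dyck-path recursion, weighting the portion of the path after the first return by $A_0$, and distinguishing --- via the familiar $A_1$ versus $A_1-1$ alternative --- whether the sub-path sitting above the expanded segment contains the genuine first segment of the whole path, yields an equation for $A_1$ that is linear in $A_1$ with coefficients rational in $A_0$. Finally, expanding the genuine last horizontal segment by the first Dyck-path recursion --- noting that, being genuinely last, it takes the ``last segment'' column of \taref{123c321} and receives no $231$-factor even when its size is $2$ --- yields an equation that expresses $A$ rationally in $A_0$ and $A_1$.

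Solving the $A_1$-equation for $A_1$ in terms of $A_0$ (possible since it is linear in $A_1$) and substituting into the $A$-equation then expresses $A$ as a rational function of $A_0$, which is the assertion (\ref{123}); the remaining algebraic closure is supplied by (\ref{1231A0}), which $A_0$ satisfies. I expect the main obstacle to be the bookkeeping hidden inside the $A_1$-recursion: one must at the same time apply the first/interior/last distinctions of \taref{123c321} to the several pieces produced by the third Dyck-path recursion, attach the size-dependent $x_1$ and $x_2$ factors (which, being functions of segment size alone, interact cleanly with the type distinction), and correctly resolve the $A_1$-versus-$(A_1-1)$ ambiguity for the sub-path that carries the first segment --- precisely the delicate point already met in the ``$k\geq 3$'' analysis in the proof of \tref{3}, now with three pattern variables rather than one. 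Once the three recursions are in hand, extracting (\ref{123}) is a long but purely mechanical elimination of $A_1$ (most safely carried out with a computer algebra system), and the absence of any radical in the final formula is exactly the reflection of the fact that the $A_1$- and $A$-equations are linear in their auxiliary unknowns.
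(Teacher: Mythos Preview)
Your proposal is correct and follows essentially the same route as the paper's own proof: define the auxiliary series $A_0$ (all segments treated as interior) and $A_1$ (last segment treated as interior) exactly as in the proof of \tref{3}, obtain (\ref{1231A0}) for $A_0$ via the first Dyck-path recursion, a relation linear in $A_1$ via the third Dyck-path recursion, and an expression for $A$ in terms of $A_0$ and $A_1$ via the first recursion, then eliminate $A_1$ to arrive at (\ref{123}). The only quibble is a cosmetic swap of the labels $x_1$ and $x_2$ in your verbal dictionary (by the ordering in $\{132,231,321\}$, the $132$-factor attached to segments of size $\geq 3$ should be $x_1$ and the $231$-factor attached to non-last segments of size $2$ should be $x_2$); this does not affect the structure of the argument.
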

The right hand side of (\ref{123}) is a rational function of $A_0$, where both the numerator and the denominator are degree $2$ polynomials in $A_0$.
\begin{proof}
	The proof of this theorem is similar to \tref{3}. It is not hard to see from \lref{p2} that the contribution of a horizontal segment to $132,231,321$-matches and descents is given by \taref{123table} if there are at least two horizontal segments in the path.
	
	\begin{table}[ht!]
		\centering
		\vspace*{-3mm}
		\caption{the contribution of a horizontal segment to $132,231,321$-matches and descents.}
		\vspace*{3mm}
		\begin{tabular}{|c|c|c|c|}
			\hline
			size&first segment&interior segment&last segment\\\hline
			$1$&$0,0,0,0$&$0,0,1,1$&$0,0,0,0$\\\hline
			$2$&$0,1,0,0$&$0,1,0,1$&$0,0,0,0$\\\hline
			$3$&$1,0,1,1$&$1,0,1,2$&$1,0,0,1$\\\hline
			$4$&$1,0,2,2$&$1,0,2,3$&$1,0,1,2$\\\hline
			$5$&$1,0,3,3$&$1,0,3,4$&$1,0,2,3$\\\hline
			$\cdots$&$\cdots$&$\cdots$&$\cdots$\\\hline
		\end{tabular}
		\label{table:123table}
	\end{table}
	
	We shall define $2$ generating functions, $A_0$ and $A_1$, similar to the functions $A_0$ and $A_1$ defined in Section 3.3. Both $A_0$ and $A_1$ track the same statistics as $A$. $A_0$ is a generating function of Dyck paths where we take every segment as an interior segment, and $A_1$ is a generating function of Dyck paths where we suppose  the last segment is an interior segment. 
	
	We shall begin with deriving a recursive expression for $A_0$ since there are only interior segments. By expanding the last horizontal segment of $A_0$, we have the following recursion which proves equation (\ref{1231A0}).
	\begin{eqnarray}\label{1231A00}
	A_0&=&1+ t x_3 y A_0 + t^2 x_1 y A_0^2 + t^3 x_2 x_3 y^2 A_0^3 + t^4 x_2 x_3^2 y^3 A_0^4+\cdots\nonumber\\
	&=&1+ t x_3 y A_0 + t^2 x_1 y A_0^2 + \frac{ t^3 x_2 x_3 y^2 A_0^3}{1 -  t x_3 y A_0}.
	\end{eqnarray}
	
	Next, we want to compute a recursion for $A_1$. We use the third Dyck path recursion which expands the last horizontal segment before the first return which is of size $k$. If $k=1$, then the only steps before the first return are $DR$. There is a Dyck path structure $P_1$ after the first return with a weight of $A_0$, and the contribution of this case is $tyA_0$ as shown in \fref{A21}. 
	
	When $k=2$, the path ends with steps $DRR$ before the first return. There is a Dyck path structure $P_1$  after the first return with a weight of $A_0$. By the third Dyck path recursion, there is another Dyck path structure $P_2$ above the last horizontal segment before the first return. The weight of $P_2$ is $A_1$ since it contains the first segment of the whole path. The weight of the last segment before the first return is $t^2x_2y$, and the contribution of this case is $t^2x_2yA_0A_1$ as shown in \fref{A22}.
	
	When $k=3$, the path ends with steps $DRRR$ before the first return. There is a Dyck path structure $P_1$  after the first return with a weight of $A_0$. There are two Dyck path structures $P_2,\ P_3$ (counting from top to bottom) above the last horizontal segment before the first return. If $P_2$ is not empty, then it has a weight of $A_1-1$ since it contains the first segment, and the weight of $P_3$ is $A_0$. If $P_2$ is empty, then it has a weight of $1$, and $P_3$ has a weight of $A_1$ since it contains the first segment of the whole path. The weight of the last segment before the first return is $t^3x_1x_3y^2$, and the contribution of this case is $t^3x_1x_3y^2A_0(A_0(A_1-1)+A_1)$ as shown in \fref{A23}.
	
	\begin{figure}[ht]
		\centering
		\vspace{-1mm}
		\subfigure[\label{fig:A21}]{\begin{tikzpicture}[scale =.3]
			\draw (0,3)--(0,2)--(1,2)--(1,0)--(3,0)--(0,3);
			\node at (1.6,.5) {\tiny $A_0$};
			\node at (-.5,2.5) {\tiny $ty$};
			\end{tikzpicture}}
		\subfigure[\label{fig:A22}]{\begin{tikzpicture}[scale =.3]
			\draw (0,6)--(0,3)--(2,3)--(2,2)--(4,2)--(4,0)--(6,0)--(0,6)--(0,5)--(3,2);
			\node at (.6,3.5) {\tiny $A_1$};
			\node at (4.6,.5) {\tiny $A_0$};
			\node at (3,1.5) {\tiny $t^2x_2y$};
			\end{tikzpicture}}
		\subfigure[\label{fig:A23}]{\begin{tikzpicture}[scale =.3]
			\draw (0,9)--(0,6)--(2,6)--(2,3)--(4,3)--(4,2)--(7,2)--(7,0)--(9,0)--(0,9)--(0,8)--(6,2)--(5,2)--(2,5);
			\node at (.6,5.5) {\tiny $A_1-1$};
			\node at (2.6,3.5) {\tiny $A_0$};
			\node at (7.6,.5) {\tiny $A_0$};
			\node at (5.3,1.5) {\tiny $t^3x_1x_3y^2$};
			\end{tikzpicture}\hspace*{-5mm}
			\begin{tikzpicture}[scale =.3]
			\draw (2,7)--(2,3)--(4,3)--(4,2)--(7,2)--(7,0)--(9,0)--(2,7)--(2,6)--(6,2)--(5,2)--(2,5);
			\node at (2.6,3.5) {\tiny $A_1$};
			\node at (7.6,.5) {\tiny $A_0$};
			\node at (5.3,1.5) {\tiny $t^3x_1x_3y^2$};
			\end{tikzpicture}}
		\caption{the function $\AAA{123}{\underline{\{132,231,321\}}}{t,y,x_1,x_2,x_3}$: contributions to $A_1$ when $k=1,2,3$.}
		\label{fig:A2}
	\end{figure}
	
	In general, if the last horizontal segment before the first return is of size $k\geq3$, then the contribution is
	\begin{equation*}
	t^kx_1x_3^{k-2}y^{k-1}A_0(A_0^{k-2}(A_1-1)+\cdots+A_0(A_1-1)+A_1).
	\end{equation*}
	It follows that
	\begin{eqnarray}\label{1231A1}
	A_1&=&1+tyA_0+ t^2x_2yA_0A_1 +t^3 x_1x_3y^2A_0(A_0(A_1-1)+A_1)+\cdots\nonumber\\
	&=&1+tyA_0+ t^2x_2yA_0A_1 - \frac{t^3x_1x_3y^3A_0(A_0-A_1-A_0A_1+tx_3yA_0A_1)}{(1-tx_3y)(1-tx_3yA_0)}.
	\end{eqnarray}
	
	Finally for $A$, we use the first Dyck path recursion by expanding the last horizontal segment. Suppose that the last segment is of size $k$. Similar to Section 3.3, the total contribution when $k<3$ is still $1 + t A_1+t^2(A_0(A_1-1)+A_1)$. When $k\geq 3$, the weight of the last segment is $t^k x_1 x_3^{k-3} y^{k-2}$, and the weight of the part  above the last segment is
	\begin{equation*}
	A_0^{k-1}(A_1-1)+A_0^{k-2}(A_1-1)+\cdots+A_0^{2}(A_1-1)+A_0(A_1-1)+A_1.
	\end{equation*}
	Thus, the contribution of the case when the last segment is of size $k\geq 3$ is
	\begin{equation*}
	t^kx_1 x_3^{k-3}y^{k-2}(A_0^{k-1}(A_1-1)+\cdots+A_0(A_1-1)+A_1).
	\end{equation*}
	It follows that
	\begin{eqnarray}\label{1231A}
	A&=&1 + t A_1+t^2(A_0(A_1-1)+A_1) + \sum_{k\geq 3}t^kx_1 x_3^{k-3}y^{k-2}(A_0^{k-1}(A_1-1)+\cdots+A_0(A_1-1)+A_1)\nonumber\\
	&=&1 + t A_1+t^2(A_0(A_1-1)+A_1) +\frac{t^3x_1y(-A_0+A_1+(tx_3y+1)A_0(A_0-A_1+A_0A_1))}{(1-tx_3y)(1-tx_3yA_0)}	.
	\end{eqnarray}
	Using equations (\ref{1231A00}),(\ref{1231A1}) and (\ref{1231A}), one can express $A$ in terms of $A_0$ to prove this theorem.
\end{proof}

We can compute the Taylor series expansion of $\AAA{123}{\underline{\{132,231,321\}}}{t,y,x_1,x_2,x_3}$ using Mathematica as follows.
\begin{multline}
\AAA{123}{\underline{\{132,231,321\}}}{t,y,x_1,x_2,x_3}=1+t+{t}^{2} \left( 1+y \right) +{t}^{3} \left( {x_3}{y}^{2}+{x_1}y+{x_2}y+2y+1 \right) 
+{t}^{4} \left( {{x_3}}^{2}{y}^{3}\right.\\
\left.+2{x_1}{x_3}{y}^{2}+{x_2}{x_3}{y}^{2}+3{x_1}{y}^{2}+2{x_2}{y}^{2}+3{x_3}{y}^{2}+3{x_1}y+
5{x_2}y+3y+1 \right) 
+{t}^{5} \left( {{x_3}}^{3}{y}^{4}+3{x_1}{{x_3}}^{2}{y}^{3}\right.\\
\left.+{x_2}{{x_3}}^{2}{y}^{3}+13{x_1}{x_3}{y}^{3}+5{x_2}{x_3}{y}^{3}+4{{x_3}}^{2}{y}^{3}
+3{{x_1}}^{2}{y}^{2}+10{x_1}{x_2}{y}^{2}
+10{x_1}{x_3}{y}^{2}+3{{x_2}}^{2}{y}^{2}\right.\\
\left.+7{x_2}{x_3}{y}^{2}+12{x_1}{y}^{2}+15{x_2}{y}^{2}+6{x_3}{y}^{2}+6{x_1}y+16{x_2}y+4y+1 \right)+\cdots. 
\end{multline}

\subsection{The function $\AAA{132}{\underline{\{123,213,231,321\}}}{t,y,x_1,x_2,x_3,x_4}$}
We have an explicit formula for $\AAA{132}{\underline{\{123,213,231,321\}}}{t,y,x_1,x_2,x_3,x_4}$ stated as follows.
\begin{thm}\label{thm:8}The generating function
	\begin{multline}
	\AAA{132}{\underline{\{123,213,231,321\}}}{t,y,x_1,x_2,x_3,x_4}\\
	=(2 x_1^2 x_3 x_4^2 t^4 y^2+2 x_1^2 x_3 x_4^2 t^3 y^2-2
	x_1^2 x_3 x_4 t^4 y^2-2 x_1^2 x_3 x_4 t^3 y-2 x_1^2
	x_3 x_4 t^2 y+x_1^2 x_3 t^3 y-x_1^2 x_4 t^3 y+x_1^2
	t^2\\
	-2 x_1 x_2 x_3^2 x_4 t^4 y^2-2 x_1 x_2 x_3^2
	x_4 t^3 y^2+x_1 x_2 x_3^2 t^4 y^2+x_1 x_2 x_3
	x_4 t^4 y^2+(x_1 t-x_3 t-1) (x_3 t y-x_4 t y+1)\\
	\cdot
	\sqrt{(x_1 t+t y ((x_2-1) x_3 t-x_4)+1)^2-4 t (x_1 (-x_4)
		t y+x_1+x_2 x_3 t y)}+x_1 x_2 x_3 t^3 y-2 x_1
	x_3^2 x_4 t^3 y^2\\
	+x_1 x_3^2 t^4 y^2+x_1 x_3^2 t^3 y+2
	x_1 x_3^2 t^2 y-2 x_1 x_3 x_4^2 t^4 y^2-2 x_1 x_3
	x_4^2 t^3 y^2-2 x_1 x_3 x_4^2 t^2 y^2+x_1 x_3 x_4
	t^4 y^2+3 x_1 x_3 x_4 t^3 y^2\\
	+3 x_1 x_3 x_4 t^3 y+2
	x_1 x_3 x_4 t^2 y+2 x_1 x_3 x_4 t y-x_1 x_3
	t^3 y-2 x_1 x_3 t^2 y-x_1 x_3 t^2-x_1 x_4^2 t^3 y^2+3
	x_1 x_4 t^2 y-2 x_1 t\\
	+x_2 x_3^3 t^4 y^2+2 x_2 x_3^3
	t^3 y^2+x_2 x_3^2 x_4 t^4 y^2+2 x_2 x_3^2 x_4 t^3 y^2+2
	x_2 x_3^2 x_4 t^2 y^2-2 x_2 x_3^2 t^4 y^2-x_2 x_3^2
	t^3 y^2\\
	-x_2 x_3^2 t^3 y-x_2 x_3 x_4 t^3 y^2-x_2 x_3
	t^2 y-x_3^3 t^4 y^2+x_3^2 x_4 t^4 y^2-x_3^2 x_4 t^3
	y^2-x_3^2 t^3 y^2-x_3^2 t^3 y+x_3^2 t^2 y+x_3 x_4^2 t^3
	y^2\\
	+x_3 x_4 t^3 y^2-x_3 x_4 t^2 y^2-2 x_3 x_4 t^2
	y-x_3 t^2 y+x_3 t y+x_3 t+x_4^2 t^2 y^2-2 x_4 t y+1)/(2
	x_3 t y (-x_1 x_4 t+x_3 t+x_4) \\
	\cdot(x_1 (-x_4) t
	y+x_1+x_2 x_3 t y)).
	\end{multline}
\end{thm}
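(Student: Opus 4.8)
The plan is to follow the template used for Theorems \ref{thm:4}, \ref{thm:6}, \ref{thm:5} and, above all, the multi-pattern Theorem \ref{thm:7}: transport everything to Dyck paths via Krattenthaler's bijection $\Phi:\Snn(132)\to\Dn$ and set up a functional equation for $A$, viewed as a generating function over $\Dn$, by expanding the last horizontal segment of a path. The first step is to read off, from \lref{p1}, how each tracked statistic shows up in $\Phi(\sg)$; after relabelling the variables so that $x_1,x_2,x_3,x_4$ mark consecutive matches of $123,213,231,321$ respectively, this gives: $y$ marks $RD$ steps (descents) and $x_1$ marks $RRR$ steps (as in \tref{4}); $x_2$ marks the junction of a horizontal segment of size $\geq 2$ with a \emph{nonempty} Dyck structure lying above its first right-step (this is the location of a consecutive $213$-match, as analyzed in the proof of \tref{6}); $x_3$ marks a horizontal segment of size $\geq 2$ that is not the last segment (a consecutive $231$-match, as in the Remark after \tref{5}); and $x_4$ marks an \emph{interior} horizontal segment of size $1$, i.e.\ one that is neither first nor last (which, by a short case check against \lref{p1}, is exactly where a consecutive $321$-match occurs). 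We are tracking four of the five length-$3$ patterns other than $132$; the fifth, $312$, is then determined by the other four via the identity noted at the start of this section (a length-$n$ permutation has exactly $n-2$ consecutive patterns of length $3$), which is why only four $x$-variables are needed.

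Because the $x_3$- and $x_4$-weights of a segment depend on whether it is first, last or interior, I would --- exactly as in the proofs of Theorems \ref{thm:3} and \ref{thm:7} --- introduce two auxiliary generating functions $A_0$ and $A_1$ tracking the same six statistics: $A_0$, in which every horizontal segment is treated as interior (each path is given a phantom segment before it and a phantom segment after it), and $A_1$, in which only the last segment is treated as interior. A horizontal segment of size $k$ then contributes the weight $t^k x_1^{\max(k-2,0)}$, times $x_3$ when $k\geq 2$ and the segment is not last, times $x_4$ when $k=1$ and the segment is interior, times the weights of the $k$ Dyck structures stacked above it, where every nonempty such structure carries an extra $y$ (the descent it creates) and the bottom one carries an extra $x_2$ exactly when it is nonempty. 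Expanding the last horizontal segment by the first Dyck path recursion yields an equation for $A_0$ (of the shape of (\ref{1231A0})); expanding the last segment before the first return by the third Dyck path recursion --- which is what lets one keep track of which substructure contains the genuine first segment of the whole path --- yields equations for $A_1$ and then for $A$ in terms of $A_0$ and $A_1$, in the same style as equations (\ref{123A0})--(\ref{123A}) and (\ref{1231A00})--(\ref{1231A}). Eliminating $A_0$ and $A_1$ leaves one polynomial equation for $A$; since $x_1$ is the only marker that can appear with unbounded exponent on a single segment, and it always comes packaged with the same geometric factor as in equation (\ref{13212333}) of \tref{4}, clearing denominators turns this equation into a \emph{quadratic} in $A$. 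Solving it and keeping the branch normalized by $A|_{t=0}=1$ produces the closed form in the statement, the expression under the square root there being exactly the resulting discriminant.

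The main obstacle is the case analysis in the second step: for each size $k$ of the last segment (or of the last segment before the first return), one must correctly assign weights to the $k$ Dyck structures stacked above it --- which of them may be empty, which single one contains the genuine first segment of the whole path (so is governed by $A_1$ rather than $A_0$), and which one sits above the first right-step and so carries the $x_2$-factor --- while simultaneously getting the first/last/interior bookkeeping right for the $x_3$- and $x_4$-markers. Once the three functional equations are written down, deriving and solving the quadratic for $A$ is a routine (if bulky) elimination, best performed in a computer algebra system, so the proof really reduces to verifying that the stated formula satisfies that quadratic and has the correct value at $t=0$. A second route, paralleling the main proof of \tref{5}, would instead split $\sg\in\Snn(132)$ at the position of its maximum entry $n$; that also leads to a quadratic, but requires extra flags recording the order relations between the two entries just before $n$ and the two entries just after $n$ (to tell $123$ from $213$, and $321$ from $312$, at those spots), so it is no simpler.
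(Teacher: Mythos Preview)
Your proposal is correct and follows essentially the same route as the paper's own proof: transport to Dyck paths via $\Phi$, identify $321$-matches with interior horizontal segments of size $1$, introduce the auxiliary functions $A_0$ (all segments interior) and $A_1$ (last segment interior), derive a quadratic for $A_0$ by the first recursion and a linear relation for $A_1$ by the third recursion, and then express $A$ in terms of these. One small slip: in the paper the equation for $A$ itself is obtained from the \emph{first} Dyck path recursion (expanding the genuine last segment, not the last segment before the first return), with the $A_0$/$A_1$ case analysis inherited from the $A_1$ computation --- exactly as in equations (\ref{123A}) and (\ref{1231A}) that you cite --- so your parenthetical ``and then for $A$'' under the third recursion is misplaced, but the equations you point to are the right ones and the outcome is the same.
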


\begin{proof}
We have enumerated the consecutive patterns 123, 213 and 231 in $\Snn(132)$ separately in Section 3 which will help in our computation of $\AAA{132}{\underline{\{123,213,231,321\}}}{t,y,x_1,x_2,x_3,x_4}$.
To compute the generating function tracking the four consecutive patterns of length $3$ simultaneously, we need to count the number of $321$-matches directly from Dyck paths. By \lref{p1}, it is not hard to see that the number of consecutive  $321$-matches is equal to the number of interior segments of size $1$ in the corresponding Dyck path, which makes the computation possible.
	
	Since we need to consider whether a horizontal segment of a Dyck path is interior or not, we shall define generating functions $A_0$ and $A_1$ similar to those of Section 3.3. Both $A_0$ and $A_1$ track the same statistics as $A$. $A_0$ is a generating function of Dyck paths while we take every segment as an interior segment, and $A_1$ is a generating function of Dyck paths while we take the last segment as an interior segment. 
	
	We shall begin with computing a recursion for $A_0$ using the first Dyck path recursion. If the last segment of a path is of size $k$, then there are $k$ Dyck path structures $P_1,\ldots,P_k$ above the last segment. Each of the paths $P_1,\ldots,P_{k-1}$ has a weight of $A_0$, and the path $P_k$ has a weight of $(x_2(A_0-1)+1)$ since there is a consecutive pattern $213$ between $P_k$ and the last segment when $P_k$ is not empty. Based on our analysis of pattern $123,213,231$ and $321$, we have the weight of the last segment and obtain the following recursion.
	\begin{eqnarray}\label{1232A0}
	A_0&=&1+tx_4yA_0+t^2x_3yA_0(x_2(A_0-1)+1)+t^3x_1 x_3yA_0^2(x_2(A_0-1)+1)+\cdots\nonumber\\
	&=&1+tx_4yA_0+\frac{t^2x_3yA_0(x_2(A_0-1)+1)}{1-tx_1A_0}.
	\end{eqnarray}
	
	Next, we want to compute a recursion for $A_1$. We use the third Dyck path recursion which expands the last horizontal segment before the first return that is of size $k$. Similar to Section 4.1, the contribution of the case when $k=1$ is still $tyA_0$. 
	
	When $k\geq 2$, the path ends with steps $DR^k$ before the first return, and there are $k-1$ Dyck path structures $P_1,\ldots,P_{k-1}$ (counting from top to bottom) above the last segment before the first return. There is also a Dyck path structure after the first return with a weight of $A_0$. Among the paths $P_1,\ldots,P_{k-1}$, we suppose that $P_i$ is the first nonempty path for $i=1,\ldots,k$ ($i=k$ means all of $P_1,\ldots,P_{k-1}$ are empty). If $i=k-1$, then $P_{k-1}$ is the only nonempty path among $P_1,\ldots,P_{k-1}$, and it has weight $x_2(A_1-1)$ since it contains the first segment and there is a consecutive pattern $213$ between $P_{k-1}$ and the last segment. When $i<k-1$, $P_{k-1}$ does not contain the first segment and it has a weight of $x_2(A_0-1)+1$; $P_i$ has a weight of $(A_1-1)$; each of $P_{i+1},\ldots,P_{k-2}$ has a weight of $A_0$. Summing over the cases for $i$ from $1$ to $k$, one can obtain that the total weight of the structures above the last horizontal segment before the first return is 
	\begin{equation*}
	(A_0^{k-3}+A_0^{k-4}+\cdots+1)(x_2(A_0-1)+1)(A_1-1)+x_2(A_1-1)+1.
	\end{equation*}
	Thus by adding all the cases, we can obtain the following recursion for $A_1$:
	\begin{eqnarray}\label{1232A1}
	A_1&=&1+tyA_0+t^2x_3yA_0(x_2(A_1-1)+1)\nonumber\\
	&&+t^3x_1x_3yA_0((x_2(A_0-1)+1)(A_1-1)+x_2(A_1-1)+1)\nonumber\\
	&&+t^4x_1^2 x_3yA_0((A_0+1)(x_2(A_0-1)+1)(A_1-1)+x_2(A_1-1)+1)+\cdots\nonumber\\
	&=&1+tyA_0+t^2x_3yA_0(x_2(A_1-1)+1)+\nonumber\\&& \frac{t^3x_1x_3yA_0(A_1-x_2A_0+x_2A_0A_1-tx_1A_0+tx_1x_2A_0-tx_1x_2A_0A_1)}{(1-tx_1)(1-tx_1A_0)}.
	\end{eqnarray}
	
	Finally for $A$, we expand the last horizontal segment like $A_0$ and analyze the weight of Dyck paths above the last segment like $A_1$ to get
	\begin{eqnarray}\label{1232A}
	A&=&1+tA_1+t^2((x_2(A_0-1)+1)(A_1-1)+x_2(A_1-1)+1)\nonumber\\
	&&+t^3x_1 \left((A_0+1)(x_2(A_0-1)+1)(A_1-1)+x_2(A_1-1)+1\right)	+\cdots\nonumber\\
	&=&1+tA_1 + \frac{A_1-1-tyA_0-t^2x_3yA_0(x_2(A_1-1)+1)}{tx_1x_3yA_0}.
	\end{eqnarray}
	One can solve the quadratic equation (\ref{1232A0}) to get a formula for $A_0$, and then solve $A_1$ from (\ref{1232A1}) and solve $A$ from (\ref{1232A}) to prove \tref{8}.
\end{proof}

\tref{8} gives the generating function with complete information about length $2$ or $3$ consecutive pattern matches in $\Snn(132)$. The generating function $\AAA{132}{\underline{\{123,213,231,321\}}}{t,y,x_1,x_2,x_3,x_4}$ is a root of a quadratic equation, and one can compute its Taylor series expansion using Mathematica as follows. 
\begin{multline}
\AAA{132}{\underline{\{123,213,231,321\}}}{t,y,x_1,x_2,x_3,x_4}=1+t+t^2(y+1)+t^3(x_1+x_2 y+x_3 y+x_4 y^2+y) \\
+t^4(x_1^2+x_1 x_2 y+x_1 x_3 y+2 x_1 y+x_2 x_3 y^2+x_2 x_3 y+2 x_2 x_4 y^2+x_3 x_4 y^2+x_3 y^2+x_3 y+x_4^2 y^3+x_4 y^2 ) \\
+ t^5(x_1^3+x_1^2 x_2 y+x_1^2 x_3 y+3 x_1^2 y+x_1 x_2
x_3 y^2+2 x_1 x_2 x_3 y+3 x_1 x_2 x_4 y^2+x_1
x_3 x_4 y^2+2 x_1 x_3 y^2\\+3 x_1 x_3 y
+3 x_1
x_4 y^2+x_2^2 x_3 y^2+x_2 x_3^2 y^2+3 x_2 x_3
x_4 y^3+2 x_2 x_3 x_4 y^2+3 x_2 x_3 y^2+3 x_2\\
x_4^2 y^3+x_3^2 y^2+x_3 x_4^2 y^3+2 x_3 x_4 y^3+x_3
x_4 y^2+x_3 y^2+x_4^3 y^4+x_4^2 y^3 )+\cdots.
\end{multline}

\section{General results about consecutive pattern distribution in $\Snn(123)$}
In this section, we shall study the distribution of $2$ kinds of consecutive patterns, $1 m (m-1)\cdots2$ and $2 m (m-1)\cdots31$,  in $\Snn(123)$ for any positive integer $m$. We will only track the length of the permutation and the number of consecutive pattern matches, and no longer track the number of descents. That is, for two permutations $\la\mbox{ and }\ga$, we will study functions of the form
\begin{equation}
\BBxt{\la}{\ga}(t,x):=\sum_{n\geq 0}t^n\sum_{\sg\in\Snn(\la)}x^{\ga\text{-mch}(\sg)}.
\end{equation}
By the action reverse-complement, the distribution of consecutive patterns $1 m (m-1)\cdots2$ and $2 m (m-1)\cdots31$ in $\Snn(123)$ are equal to the distribution of consecutive patterns $(m-1)\cdots21m$ and $m (m-2)\cdots21(m-1)$, thus we have
\begin{eqnarray}
\BBxt{123}{1 m (m-1)\cdots2}(t,x)&=&\BBxt{123}{(m-1)\cdots21m}(t,x),\\
\BBxt{123}{2 m (m-1)\cdots31}(t,x)&=&\BBxt{123}{m (m-2)\cdots21(m-1)}(t,x).
\end{eqnarray}
We will compute recursions for the two generating functions above. $\BBxt{\la}{\ga}(t,x)$ is abbreviated as $B$ in each computation in Sections 5 and 6.

\subsection{The function $\BBxt{123}{1 m (m-1)\cdots2}(t,x)$}
\begin{thm}The function $\BBxt{123}{1 m (m-1)\cdots2}(t,x)$ satisfies the recursion
	\begin{equation}\label{123long1}
	B=\frac{1+(x-1)t^mB^m}{1-tB},
	\end{equation}and
	\begin{equation}\label{123longcoeff1}
	\BBxt{123}{1 m (m-1)\cdots2}(t,x)|_{t^nx^k}=\frac{1}{k} \sum_{i=k}^{\lfloor\frac{n}{m}\rfloor} (-1)^{i-k} \binom{2n-mi}{n-mi,n+1-i,k-1,i-k}.
	\end{equation}
\end{thm}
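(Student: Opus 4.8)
The strategy mirrors the proofs of Theorems~\ref{thm:1}--\ref{thm:4}: first establish the recursion~(\ref{123long1}) by a Dyck-path decomposition under the Deutsch--Elizalde bijection $\Psi$, and then extract the coefficient formula~(\ref{123longcoeff1}) by solving the algebraic equation and applying Lagrange inversion.

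For the recursion, I would first translate the consecutive pattern $1m(m-1)\cdots 2$ into a Dyck-path pattern. By \lref{p2}, for $\sg\in\Snn(123)$, the entry $\sg_i$ sits above the first $R$-step of a horizontal segment of $\Psi(\sg)$ precisely when $\sg_i<\sg_{i+1}$, and the remaining entries of a segment form a consecutive decreasing run. Hence a consecutive $1m(m-1)\cdots2$-match corresponds to a small ascent-bottom followed by a decreasing run of length $m-1$ lying within a single horizontal segment, i.e.\ to the path pattern $DR^m$ occurring at the \emph{start} of a horizontal segment of length exactly $m$, or more precisely a $DR^m$-match whose $R$-steps are the first $m$ right-steps of a segment. (This is the direct generalization of the $DRRR$-analysis for $m=3$ in Section~3.1.) So $B=\BBxt{123}{1m(m-1)\cdots2}(t,x)$ is the generating function for Dyck paths where $t$ marks size and $x$ marks the number of such patterns. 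Then I apply Recursion~2 (first-return decomposition): the empty path contributes $1$; a nonempty path is $DP_1RP_2$, where $P_2$ (after the first return) contributes $B$, and $P_1$ together with the first segment must be analyzed. Rather than splitting at the first return, it is cleaner here to expand the \emph{last horizontal segment} as in Recursion~1, or—better, matching the shape of~(\ref{123long1})—to use Recursion~3 / the segment-expansion that peels off the first segment: if the first segment has length $k$, it contributes $t^k B^{k}$ worth of smaller structures below it, with an extra factor $x-1$ when $k\ge m$ coming from whether the initial $DR^m$ counts as a match. Concretely one should get
\begin{equation*}
B = 1 + \sum_{k\ge 1} t^k B^{k-1}B \cdot[\text{weight}] = \frac{1}{1-tB} + (x-1)\,t^m B^m\cdot\frac{1}{1-tB},
\end{equation*}
where the $\frac{1}{1-tB}$ packages the geometric series over the first-segment length (each of the $k$ columns carrying a $tB$), and the $(x-1)t^mB^m$ term corrects exactly the runs of length $\ge m$ that contain the initial decreasing block. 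Collecting terms gives $B=\dfrac{1+(x-1)t^mB^m}{1-tB}$, which is~(\ref{123long1}). Clearing denominators yields $B-tB^2 = 1+(x-1)t^mB^m$, i.e.\ $B$ satisfies a functional equation amenable to Lagrange inversion.

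For the coefficient formula, write the functional equation as $B = 1 + tB^2 + (x-1)t^mB^m$ after rearranging $B(1-tB)=1+(x-1)t^mB^m$, so $B = \phi(B)$ with $\phi(u)=\dfrac{1+(x-1)t^m u^m}{1-tu}$ — but a cleaner route is to note that $B$ counts a restricted class and to apply the Lagrange inversion formula in the variable $t$ (or in a combined variable) to $B=1+tB^2+(x-1)t^mB^m$. Extracting $[t^n]$ and then the coefficient of $x^k$: the factor $(x-1)^i$ expands binomially, and only the $x^k$-term survives, producing the alternating sum $\sum_{i\ge k}(-1)^{i-k}\binom{i}{k}\cdots$; the multinomial $\binom{2n-mi}{\,n-mi,\;n+1-i,\;k-1,\;i-k\,}$ then arises from counting the ways to interleave $n-mi$ "ordinary" right steps, the return structure ($n+1-i$ of one type), and the $i$ marked blocks split as $k$ chosen plus $i-k$ unchosen, with the $\frac1k$ a cyclic-lemma / Lagrange normalization. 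The main obstacle will be getting this last bookkeeping exactly right — in particular justifying the precise multinomial and the leading $\frac1k$ rather than $\frac1n$ or $\frac1{n+1}$, which requires being careful about which variable one inverts in and whether one applies Lagrange inversion to $B$ itself or to $B-1$ or to a related series such as $tB$. I expect the algebra of the Lagrange extraction, not the combinatorial recursion, to be where the real work lies, and I would model it closely on the companion derivations that the paper defers to Sections~5 and~6 (e.g.\ the parallel formula~(\ref{thm1eq}) for $m=3$, which is exactly~(\ref{123longcoeff1}) specialized).
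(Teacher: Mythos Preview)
Your overall plan matches the paper's: translate to a Dyck-path pattern via $\Psi$, derive the recursion by a segment decomposition, and extract coefficients by Lagrange inversion. Two places need sharpening.

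First, commit to Recursion~1 (expand the \emph{last} horizontal segment), not the first segment or the first return. Your displayed formula
\[
B=\frac{1}{1-tB}+(x-1)\,\frac{t^mB^m}{1-tB}
\]
is exactly what Recursion~1 gives --- if the last segment has length $k$ there are $k$ Dyck structures above it, each weighted $B$, and the segment itself weighs $t^k$ for $k<m$ and $xt^k$ for $k\ge m$ (a $DR^m$-match means precisely that the segment has length $\ge m$, not exactly $m$). Your verbal description (``first segment \ldots\ structures below it'') does not describe this decomposition and would not produce that formula; drop it.

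Second, for the coefficient formula the paper's route is the one you list last: set $F=tB$, so that~(\ref{123long1}) becomes $F=t\,\delta(F)$ with $\delta(z)=\dfrac{1+(x-1)z^m}{1-z}$, and Lagrange gives $[t^n]F=\frac{1}{n}[z^{n-1}]\delta(z)^n$. Expanding $(1+(x-1)z^m)^n(1-z)^{-n}$ and picking off $z^{n-1}x^k$ yields
\[
B|_{t^{n-1}x^k}=\frac{1}{n}\sum_{i\ge k}(-1)^{i-k}\binom{n}{i}\binom{i}{k}\binom{2n-mi-2}{n-1}.
\]
The passage from $\frac{1}{n}$ to $\frac{1}{k}$ is then just the multinomial identity
\[
\frac{1}{n}\binom{n}{i}\binom{i}{k}\binom{2n-mi-2}{n-1}=\frac{1}{k}\binom{2n-mi-2}{\,n-mi-1,\;n-i,\;k-1,\;i-k\,},
\]
after which shifting $n\to n+1$ gives~(\ref{123longcoeff1}). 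There is no cyclic-lemma subtlety here; the $\frac{1}{k}$ is purely algebraic.
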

\begin{proof}
	Referring to \lref{p2}, the number of consecutive $1 m (m-1)\cdots2$-matches in $\sg\in\Snn(123)$ is equal to the number of path pattern $DR^m$-matches in the corresponding Dyck path $\Psi(\sg)$. We shall compute the distribution of pattern $DR^m$-matches  in Dyck paths to get the generating function $B$. Note that the number of $DR^m$-matches is equal to the number of horizontal segments of length at least $m$.
	
	We use the first Dyck path recursion by expanding the last horizontal segment. Let $k$ denote the size of the last horizontal segment, then there are $k$ Dyck path structures above the last segment, each with a weight of $B$. The weight of the last horizontal segment is $t^k$ when $k<m$ and $xt^k$ when $k\geq m$ since there is a path pattern $DR^m$ appearing in the last segment when the size of the last segment is at least $m$.
	Thus we have the following equation which proves equation (\ref{123long1}).
	\begin{eqnarray}
	B&=&1+tB+\cdots+t^{m-1}B^{m-1} +x(t^mB^m +\cdots)\nonumber\\
	&=&\frac{1-t^mB^m}{1-tB}+x\frac{t^mB^m}{1-tB}\nonumber\\
	&=&\frac{1+(x-1)t^mB^m}{1-tB}.
	\end{eqnarray}
	
	We can multiply both sides of equation (\ref{123long1}) with $t$ and substitute $tB$ with $F$ to obtain
	\begin{equation}
	F=t \frac{1+(x-1)F^m}{1-F}.
	\end{equation}
	It follows from the Lagrange Inversion Theorem that
	\begin{equation}
	F|_{t^n}=\frac{1}{n}\delta^n(z)\big|_{z^{n-1}},
	\end{equation}
	where $\displaystyle\delta(z)=\frac{1+(x-1)z^m}{1-z}$. Thus,
	\begin{eqnarray}\label{123long111}
	B|_{t^{n-1}x^k}&=& F|_{t^nx^k} \nonumber\\
	&=& \frac{1}{n} (1+(x-1)z^m)^n (1-z)^{-n} \big|_{z^{n-1}x^k}\nonumber\\
	&=& \frac{1}{n} \left(\sum_{i=0}^{n}\binom{n}{i}(x-1)^i z^{mi}\right) \left(\sum_{i\geq 0} \binom{n+i-1}{n-1}z^i\right) \Bigg|_{z^{n-1}x^k} \nonumber\\
	&=& \frac{1}{n} \sum_{i=0}^{\lfloor\frac{n-1}{m}\rfloor} \binom{n}{i}(x-1)^i \binom{2n-mi-2}{n-1} \bigg|_{x^k}\nonumber\\
	&=&\frac{1}{n} \sum_{i=k}^{\lfloor\frac{n-1}{m}\rfloor} (-1)^{i-k} \binom{n}{n-i,k,i-k}\binom{2n-mi-2}{n-1}\nonumber\\
	&=&\frac{1}{k} \sum_{i=k}^{\lfloor\frac{n-1}{m}\rfloor} (-1)^{i-k} \binom{2n-mi-2}{n-mi-1,n-i,k-1,i-k},
	\end{eqnarray}
	and equation (\ref{123longcoeff1}) follows by substituting $n$ with $n+1$ in equation (\ref{123long111}).
\end{proof}

Let $m=3$, we obtain a formula for the coefficient of $t^nx^k$ in $\AAAover{123}{132}(t,1,x)$ which proves equation (\ref{thm1eq}) in \tref{1}.

\subsection{The function $\BBxt{123}{2 m (m-1)\cdots31}(t,x)$}
\begin{thm}The function $\BBxt{123}{2 m (m-1)\cdots31}(t,x)$ satisfies the following recursion:
	\begin{equation}\label{123long2}
	B=t \frac{B^{m+2}+(x-1)(B-1)^{m-1}}{B^{m-1}(B-1)},
	\end{equation}and
	\begin{equation}\label{123longcoeff2}
	\BBxt{123}{2 m (m-1)\cdots31}(t,x)|_{t^nx^k}= \frac{1}{n} \sum_{i=0}^{\lfloor\frac{mn-1}{m+2}\rfloor}(-1)^{mn+n+k+1}\binom{n}{i} \binom{mn-mi-2n+i}{mn-1} \binom{mn-mi-n+i}{k},
	\end{equation}
	here $\displaystyle \binom{\alpha}{k}=\frac{\alpha(\alpha-1)\cdots(\alpha-k+1)}{k!}$ is the generalized binomial coefficient.
\end{thm}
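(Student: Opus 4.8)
The plan is to follow Section~3.2 and the preceding subsection: translate the consecutive pattern into a Dyck-path pattern, derive a recursion by expanding the last horizontal segment, and then read off coefficients by Lagrange inversion. By \lref{p2} and the bijection $\Psi$, a consecutive $2m(m-1)\cdots31$-match in $\sg\in\Snn(123)$ corresponds exactly to an occurrence of the path pattern $DR^{m-1}D$ in $\Psi(\sg)$ (for $m=3$ this is the pattern $DRRD$ used in the proof of \tref{2}); equivalently, $2m(m-1)\cdots31\text{-mch}(\sg)$ is the number of horizontal segments of $\Psi(\sg)$ of length exactly $m-1$ that are not the last segment. So it suffices to compute, over $\Dn$, the generating function $B$ for the statistic ``number of non-last horizontal segments of length $m-1$''.

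The only feature not already present in the previous subsection (where $DR^m$-occurrences did not care whether a segment was the last one) is this ``non-last'' condition, caused by the trailing $D$. I would deal with it exactly as in \tref{2}, by introducing the auxiliary series
\[
B_1(t,x):=\sum_{n\ge0}t^n\sum_{P\in\Dn}x^{DR^{m-1}D\text{-mch}(PD)},
\]
i.e.\ the same weight evaluated on the extended path $PD$, so that a final segment of $P$ of length $m-1$ is now flanked on the right by a $D$ and is counted. Expanding the last horizontal segment via the first Dyck path recursion --- a last segment of length $k$ sits above $k$ Dyck structures, each of which is immediately followed by a $D$ step and hence carries weight $B_1$, while the last segment contributes $t^k$ and, in $B_1$ only, an extra factor $x$ precisely when $k=m-1$ --- gives
\[
B=1+\sum_{k\ge1}t^kB_1^k=\frac{1}{1-tB_1},\qquad
B_1=\frac{1}{1-tB_1}+(x-1)\,t^{m-1}B_1^{m-1}.
\]
Using the first equation to write $B_1=\frac{B-1}{tB}$, substituting into the second, and clearing denominators produces the polynomial relation for $B$ recorded in (\ref{123long2}).

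For the coefficient formula (\ref{123longcoeff2}) I would rerun the Lagrange-inversion computation of the previous proof. Set $F=B-1$, so that $F$ has vanishing constant term and (\ref{123long2}) takes the form $F=t\,\phi(F)$ for a suitable rational function $\phi$; the Lagrange Inversion Theorem then gives $F|_{t^n}=\frac{1}{n}\,\phi(z)^n\big|_{z^{n-1}}$. Expanding $\phi(z)^n$ by the binomial theorem, the essential ingredient is a factor of the form $(1+z)^{\,an-bi}$ (with $i$ the summation index and $a,b$ depending only on $m$), which one expands as a formal power series; when the exponent $an-bi$ is negative this is exactly where the generalized binomial coefficients $\binom{\alpha}{k}$ of the statement enter. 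Reading off $[z^{n-1}]$, then extracting $[x^k]$ via $(x-1)^i\big|_{x^k}=(-1)^{i-k}\binom{i}{k}$, collapses everything to a single sum over $i$, which one rewrites into the shape of (\ref{123longcoeff2}); since $F=B-1$ one has $B|_{t^nx^k}=F|_{t^nx^k}$ for $n\ge1$, and this finishes the proof.

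The generating-function part of the argument is short and essentially parallel to the proofs of \tref{2} and of the preceding subsection; the place where I expect to spend real effort is purely technical. It is the bookkeeping at the end: manipulating the resulting (generalized) binomial coefficients into the stated closed form, treating the expansion of $(1+z)^{an-bi}$ uniformly across the regimes where the exponent is nonnegative and where it is negative, tracking the sign $(-1)^{i-k}$ together with the signs introduced when one rewrites negative-top binomials, and verifying that the summation index may be truncated at $\lfloor(mn-1)/(m+2)\rfloor$ because the remaining terms vanish. None of this is conceptually deep, but it is the delicate part of the argument.
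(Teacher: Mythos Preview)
Your proposal is correct and mirrors the paper's argument: the translation to the Dyck-path pattern $DR^{m-1}D$, the auxiliary series $B_1$, the pair of recursions $B=\dfrac{1}{1-tB_1}$ and $B_1=\dfrac{1}{1-tB_1}+(x-1)t^{m-1}B_1^{m-1}$, and the substitution $B_1=\dfrac{B-1}{tB}$ are exactly what the paper does. The only cosmetic difference is at the Lagrange-inversion step: the paper applies it directly to $B=t\,\delta(B)$ with $\delta(z)=\dfrac{z^{m+2}+(x-1)(z-1)^{m-1}}{z^{m-1}(z-1)}$ (so the generalized binomials come from expanding powers of $(z-1)$ with possibly negative exponent), whereas you first shift to $F=B-1$; the two computations are related by $z\mapsto z+1$ and yield the same coefficient formula.
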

\begin{proof}
	Referring to \lref{p2}, the number of consecutive $2 m (m-1)\cdots31$-matches in $\sg\in\Snn(123)$ is equal to the number of $DR^{m-1}D$-matches in the corresponding Dyck path $\Psi(\sg)$. We shall compute the distribution of $DR^{m-1}D$-matches in Dyck paths to get the generating function $B$.
	
	In fact, the number of $DR^{m-1}D$-matches is equal to the number of horizontal segments of length $m$ which are not the last segment, and we need to define a generating function $B_1$ that tracks the same statistics as $B$ while we suppose the last segment is an interior segment.
	
	We use the first Dyck path recursion for both $B_1$ and $B$. First for $B_1$, we let $k$ denote the size of the last horizontal segment, then there are $k$ Dyck path structures above the last segment, each with a weight of $B_1$. The weight of the last horizontal segment is $t^k$ when $k\neq m-1$ and $xt^k$ when $k= m-1$ since it is considered to be interior (i.e.\ we suppose there is a $D$ step following the path) and there is a path pattern $DR^{m-1}D$ appearing in the last segment.
	Thus we have
	\begin{equation}\label{123long2B1}
	B_1=1+tB_1+\cdots+t^{m-2}B_1^{m-2} +xt^{m-1}B_1^{m-1} +t^{m}B_1^{m}+\cdots=\frac{1}{1-tB_1}+(x-1)t^{m-1}B_1^{m-1}.
	\end{equation}
	Then for $B$, we expand the last segment which is of size $k$. There are $k$ Dyck path structures above the last segment, each with a weight of $B_1$. The weight of the last horizontal segment is $t^k$ for any $k$ since it does not contain the pattern $DR^{m-1}D$. It follows that
	\begin{equation}\label{123long2B}
	B=1+tB_1+t^2B_1^2+\cdots=\frac{1}{1-tB_1}.
	\end{equation}
	Equation (\ref{123long2B}) implies that $B_1=\frac{B-1}{tB}$. Substituting $B_1$ with $\frac{B-1}{tB}$ in (\ref{123long2B1}) and multiplying both sides with $\frac{tB^2}{B-1}$ gives (\ref{123long2}).
	
	Next, we shall compute the coefficient using the Lagrange Inversion Theorem. It follows that
	\begin{equation}
	B|_{t^n}=\frac{1}{n}\delta^n(z)\big|_{z^{n-1}},
	\end{equation}
	where $\displaystyle\delta(z)=\frac{z^{m+2}+(x-1)(z-1)^{m-1}}{z^{m-1}(z-1)}$. Thus,
	\begin{eqnarray}
	B|_{t^nx^k}&=& \frac{1}{n} (z^{m+2}+(x-1)(z-1)^{m-1})^n (z^{m-1}(z-1))^{-n} \big|_{z^{n-1}x^k}\nonumber\\
	&=& \frac{1}{n} \sum_{i=0}^{\lfloor\frac{mn-1}{m+2}\rfloor}\binom{n}{i}(x-1)^{(m-1)(n-i)}z^{(m+2)i} (z-1)^{(m-1)(n-i)-n} \bigg|_{z^{mn-1}x^k} \nonumber\\
	&=& \frac{1}{n} \sum_{i=0}^{\lfloor\frac{mn-1}{m+2}\rfloor}\binom{n}{i}(x-1)^{(m-1)(n-i)} (z-1)^{(m-1)(n-i)-n} \bigg|_{z^{mn-1-mi-2i}x^k} \nonumber\\
	&=& \frac{1}{n} \sum_{i=0}^{\lfloor\frac{mn-1}{m+2}\rfloor}(-1)^{mn+n+k+1}\binom{n}{i} \binom{mn-mi-2n+i}{mn-1} \binom{mn-mi-n+i}{k}.\ \ \qedhere
	\end{eqnarray}
\end{proof}

Let $m=3$, we obtain a formula for the coefficient of $t^nx^k$ in $\AAAover{123}{231}(t,1,x)$ which proves equation (\ref{thm2eq}) in \tref{2}.

\section{General results about consecutive pattern distribution in $\Snn(132)$} 

Given a permutation $\sg=\sg_1\cdots\sg_n\in\Snn(132)$, we let $\Phi'(\sg)$ be the Dyck path obtained by removing the initial $n+1-\sg_1$ $D$ steps from $\Phi(\sg)$, and let $\Phi''(\sg)$ be the Dyck path obtained by removing the last one $R$ step of $\Phi'(\sg)$. For any Dyck path $P$ beginning with $D^r R$, we also let $P'$ denote the path obtained by removing the initial $r$ D steps from $P$. 

For example, the Dyck path corresponds to the permutation $\sg=42351\in\Sn{5}(132)$ is $\Phi(\sg)=DDRDDRRRDR$, then $\Phi'(\sg)=RDDRRRDR$, $\Phi''(\sg)=RDDRRRD$ and $(DDRDDRRRDR)'=RDDRRRDR$. Then according to the bijection of Krattenthaler \cite{Kr},
we have the following theorem.
\begin{thm}\label{thm:general}
	Let $m$ be a positive integer and let $\ga=\ga_1\cdots\ga_m\in\Sn{m}(132)$, then
	\begin{enumerate}[(a)]
		\item if $\ga_m=m$, then the distribution of consecutive $\ga$-matches in $\Snn(132)$ is equal to the distribution of $\Phi'(\ga)$-matches in $\Dn$;
		
		\item if $\ga_{m-1}\ga_m=m1$, then the distribution of consecutive $\ga$-matches in $\Snn(132)$ is equal to the distribution of $\Phi''(\ga)$-matches in $\Dn$.
	\end{enumerate}
\end{thm}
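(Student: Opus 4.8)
The plan is to exploit the precise structure of Krattenthaler's bijection $\Phi:\Snn(132)\to\Dn$ as recorded in \lref{p1}: within each horizontal segment $H$ of $\Phi(\sg)$ the numbers associated to $H$ form a consecutive increasing run in $\sg$, with the least such number sitting directly above the first right-step of $H$, and descents of $\sg$ occur only between two different segments. Consequently, a consecutive window $\sg_i\sg_{i+1}\cdots\sg_{i+m-1}$ of $\sg$ is determined, as an abstract pattern, by how it is distributed among the segments of $\Phi(\sg)$ and by the shape of the sub-path of $\Phi(\sg)$ that lies over the corresponding $m$ columns. The key observation is that since $\ga=\ga_1\cdots\ga_m$ is itself $132$-avoiding, the path $\Phi(\ga)\in\mathcal D_m$ is defined, and a $\ga$-match of $\sg$ at position $i$ should correspond exactly to an occurrence of the ``shape'' $\Phi(\ga)$ among the columns $i,i+1,\dots,i+m-1$ of $\Phi(\sg)$ — modulo the ambiguity of what the path is doing just before column $i$ (the initial $D$-steps leading into column $i$) and just after column $i+m-1$ (whether the run continues). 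Parts (a) and (b) are the two cases in which this ambiguity is resolved, by dropping the initial descent (initial $D$-steps) in case (a) and additionally dropping the terminal run-continuation (the last $R$-step) in case (b).

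The steps I would carry out, in order, are the following. First, I would make precise the local correspondence: given $\sg\in\Snn(132)$ with path $P=\Phi(\sg)$, and given a column index $i$, let $w=\red(\sg_i\cdots\sg_{i+m-1})$; I would show that $w$ is automatically $132$-avoiding (a consecutive sub-word of a $132$-avoiding permutation is $132$-avoiding), so $\Phi(w)\in\mathcal D_m$ is defined, and then show that $\Phi(w)$ is obtained from the sub-path of $P$ spanning columns $i$ through $i+m-1$ by (i) deleting the $D$-steps that precede the first $R$-step in column $i$ (these correspond to the ``drop'' from $\sg_{i-1}$ or, if $i=1$, from the top of the grid, i.e.\ to a possible descent at position $i-1$), and (ii) possibly truncating a trailing $R$-step according to whether $\sg_{i+m-1}<\sg_{i+m}$. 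This is just a careful reading of \lref{p1}: the relative order of $\sg_i,\dots,\sg_{i+m-1}$ is governed exactly by which segments they fall into and in what positions, which is exactly the data of the truncated sub-path. Second, I would specialize: if $\ga_m=m$, then in $\Phi(\ga)$ the largest value $m$ sits in the first column of the last segment, which forces the last step of $\Phi(\ga)$ to be an $R$-step and makes the truncation in (ii) vacuous; moreover $\Phi(\ga)$ begins with $D^{\,m+1-\ga_1}R$, and $\Phi'(\ga)$ is precisely $\Phi(\ga)$ with those initial $D$-steps removed, which is exactly the normalization that makes the match condition independent of what $\sg$ does at position $i-1$. Hence a consecutive $\ga$-match at position $i$ of $\sg$ is in bijection with an occurrence of the Dyck-path factor $\Phi'(\ga)$ inside $P$ starting at column $i$ (interpreting ``occurrence'' as: $P$ contains $\Phi'(\ga)$ as a contiguous segment of steps beginning at the appropriate column after some $D$-steps). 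Summing over all $\sg$ and all $i$ gives the claimed equality of distributions. Third, for part (b), when $\ga_{m-1}\ga_m=m1$ we have an additional descent at position $m-1$ of $\ga$, i.e.\ the path $\Phi(\ga)$ ends with $\cdots R D$; removing that terminal $R$-step (to get $\Phi''(\ga)$) is exactly the normalization that makes the match condition also independent of whether $\sg$ keeps ascending after position $i+m-1$ — equivalently, it records that $\sg_{i+m-1}$ is a left-to-right minimum of the window and hence sits above the first $R$-step of its segment, corresponding to a peak of $P$. Again one checks that $\ga$-matches of $\sg$ at position $i$ biject with occurrences of $\Phi''(\ga)$ in $P$ at column $i$, and sum.

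The main obstacle, I expect, is the bookkeeping at the two boundaries of the window — making ``an occurrence of the path-factor $\Phi'(\ga)$ (resp.\ $\Phi''(\ga)$) in $P$ at column $i$'' into a definition that is simultaneously (1) literally a factor-occurrence statement one can plug into the Dyck-path generating-function recursions of Section~\ref{sec:recursions}, and (2) provably equinumerous with $\ga$-matches with no off-by-one errors in the $i=1$ case and the $i=n-m+1$ case. Concretely: a $\ga$-match at position $i$ needs the column-$i$ entry $\sg_i$ to sit in a \emph{new} (lower) segment than $\sg_{i-1}$ precisely when $\ga_1$ is a left-to-right minimum of $\ga$ (which, since $\ga$ avoids $132$ and $\ga_m$ is large, happens iff $\ga_1=1$ or $\ga$ begins with a descent), and otherwise $\sg_i$ may share a segment with earlier entries; stripping the leading $D^{\,m+1-\ga_1}$ from $\Phi(\ga)$ is exactly what erases this distinction, but verifying that it does so \emph{cleanly} — that every way of embedding the truncated path back into $P$ corresponds to exactly one genuine window and vice versa — is the delicate part. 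Once the boundary conventions are pinned down, the rest is a direct translation and the theorem follows; I would therefore spend most of the write-up on a clean statement and proof of the local lemma "$w$ is $132$-avoiding and its window shape equals the truncated sub-path of $\Phi(\sg)$," and then deduce (a) and (b) as one-paragraph corollaries.
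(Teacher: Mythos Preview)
Your overall strategy—translate consecutive $\ga$-matches into Dyck-path factor matches via $\Phi$—is the paper's strategy too. But the ``local lemma'' you propose in step~1 is false as stated: it is \emph{not} true that $\Phi(w)$, for $w=\red(\sg_i\cdots\sg_{i+m-1})$, equals the sub-path of $\Phi(\sg)$ over columns $i,\dots,i+m-1$ modulo only the two boundary adjustments you list. Take $\sg=312$, $i=1$, $m=2$: then $w=21$, the sub-path of $\Phi(\sg)=DRDDRR$ from the first $R$ to the second $R$ is $RDDR$, while $\Phi'(21)=RDR$ and $\Phi''(21)=RD$. The extra $D$ is not a boundary artifact—it appears because the global drop from $\sg_1=3$ to $\sg_2=1$ is larger than the local drop recorded in $\Phi(21)$, the value $2=\sg_3$ lying outside the window.

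The point you miss, and which drives the paper's argument, is that a single $\ga$ corresponds not to one path factor but to a \emph{family} of them. Decompose $\Phi(\ga)=P_1P_2\cdots P_k$ at its return positions; then the sub-paths of $\Phi(\sg)$ whose overlying word reduces to $\ga$ are exactly those of the form $P_1'\,D^{r_1}P_2\,D^{r_2}\cdots D^{r_{k-1}}P_k$ with each $r_i\ge 0$ arbitrary—extra $D$-steps can be inserted at every internal return of $\Phi(\ga)$, not just at the two ends. The hypotheses of the theorem are precisely what collapse this family: $\ga_m=m$ forces $k=1$ (no internal return), so the unique factor is $\Phi'(\ga)$; and $\ga_{m-1}\ga_m=m1$ forces $k=2$ with $P_2=DR$, so the family becomes $\Phi''(\ga)\,D^{r}R$ for $r\ge 0$, each member counted once by a $\Phi''(\ga)$-match. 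Your focus on a ``trailing $R$-step'' is therefore a red herring; the genuine obstruction is internal, at the returns of $\Phi(\ga)$, and your plan never engages with it.
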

The proof of \tref{general} is completely based on the construction of the map $\Phi$. We shall give a sketch of this proof. 
\begin{proof}
We let $\sg=\sg_1\cdots\sg_n$ be any 132-avoiding permutation and let $P=\Phi(\sg)$. Let $Q_1,Q_2$ be two sub-paths consisting of consecutive steps of $P$ begin and end with $R$ steps, and let $\tau_1$, $\tau_2$ be the sub-permutation of $\sg$ above the paths $Q_1$ and $Q_2$ respectively. Then according to the construction of the map $\Phi$, $\red(\tau_1)=\red(\tau_2)$ if $Q_1=Q_2$, i.e.\ a sub-path determines the reduction of the sub-permutation above it, and one can count consecutive patterns from sub-paths.

On the other hand, each consecutive pattern $\ga$ corresponds to several path patterns. In fact, let $\Phi(\ga)=P_1\cdots P_k$ be the Dyck path correspond to $\ga$ where each $P_i$ is a non-empty path with no return position, then each path pattern corresponds to $\ga$ is of the form $P_1' D^{r_1} P_2 D^{r_2}\cdots P_{k-1} D^{r_{k-1}} P_k$, where each $r_i$ is a non-negative integer.

For part (a), if $\ga_m=m$, then $\Phi(\ga)$ has no return position, and there is only one path pattern $\Phi(\ga)'=\Phi'(\ga)$ corresponds to $\ga$. For part (b), if $\ga_{m-1}\ga_m=m1$, then $\Phi(\ga)$ has only one return position before the \thn{n} column, and each path pattern corresponds to $\ga$ is of the form $\Phi'(\ga_1\cdots\ga_{m-1}) D^r R$ for some $r\geq 1$. In other words, each path pattern corresponds to $\ga$ is of the form $\Phi'(\ga_1\cdots\ga_{m-1}) D=\Phi''(\ga)$.
\end{proof}

Based on \tref{general}, we can prove several general results about consecutive pattern distributions in $\Snn(132)$.

\subsection{The function $\BBxt{132}{1\cdots m}(t,x)$}
\begin{thm}The function $\BBxt{132}{1\cdots m}(t,x)$ satisfies the following recursion:
	\begin{equation}\label{132long}
	B=\frac{1-t^{m-1}B^{m-1}}{1-tB}+\frac{t^{m-1}B^{m-1}}{1-txB},
	\end{equation}and
	\begin{equation}\label{132longcoeff}
	\BBxt{132}{1\cdots m}(t,x)|_{t^nx^k}= \frac{1}{n+1} \sum_{i=0}^{\lfloor\frac{n}{m-1}\rfloor} \sum_{j=0}^{n+1-i} (-1)^{mj-j} \binom{n+1}{i} \binom{i+k-1}{k} 
	\binom{2n-mi-mj+j-k}{n-i} .
	\end{equation}
\end{thm}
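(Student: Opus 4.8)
The plan is to follow the three‑step template of Sections 5.1 and 5.2: reduce the consecutive‑pattern count to a Dyck‑path pattern count via \tref{general}, set up a recursion for $B=\BBxt{132}{1\cdots m}(t,x)$ by expanding the last horizontal segment, and then extract coefficients by Lagrange inversion. For the reduction, note that $\ga=1\cdots m$ ends in $\ga_m=m$, so part~(a) of \tref{general} says the distribution of consecutive $1\cdots m$‑matches in $\Snn(132)$ equals the distribution of $\Phi'(1\cdots m)$‑matches in $\Dn$. The increasing permutation $1\cdots m$ has a single left‑to‑right minimum, so by \lref{p1} its Krattenthaler image has one horizontal segment, forcing $\Phi(1\cdots m)=D^mR^m$; deleting the initial $m+1-\ga_1=m$ down‑steps gives $\Phi'(1\cdots m)=R^m$. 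Thus $B$ records, in the exponent of $x$, the number of occurrences of $m$ consecutive right‑steps in a Dyck path; since consecutive horizontal segments are separated by a down‑step, every $R^m$ lies inside one segment, and a segment of length $\ell$ carries $\max(\ell-m+1,0)$ of them.

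For the recursion I would use the first Dyck path recursion of Section~\ref{sec:recursions}, expanding the last horizontal segment of a nonempty path: a last segment of size $k$ spans $k$ columns and sits above $k$ independent Dyck‑path structures, so it contributes $t^kB^k$, times $x^{\max(k-m+1,0)}$ for the $R^m$'s inside it. Adding the empty path, the segments of size $1\le k\le m-1$ (which carry no $x$), and the segments of size $k\ge m$ (weight $x^{k-m+1}$) gives
\[ B=\sum_{k=0}^{m-1}(tB)^k+\sum_{k\ge m}(tB)^kx^{k-m+1}=\frac{1-(tB)^{m-1}}{1-tB}+\frac{(tB)^{m-1}}{1-txB}, \]
which is (\ref{132long}) once we write $(tB)^{m-1}=t^{m-1}B^{m-1}$.

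For the closed form, set $F=tB$; then $t^{m-1}B^{m-1}=F^{m-1}$ and (\ref{132long}) becomes $F=t\,\delta(F)$ with $\delta(z)=\dfrac{1-z^{m-1}}{1-z}+\dfrac{z^{m-1}}{1-xz}$ and $\delta(0)=1$. Lagrange inversion gives $[t^n]F=\tfrac1n[z^{n-1}]\delta(z)^n$, so $B|_{t^n}=[t^{n+1}]F=\tfrac1{n+1}[z^n]\delta(z)^{n+1}$ and hence $\BBxt{132}{1\cdots m}(t,x)|_{t^nx^k}=\tfrac1{n+1}[z^nx^k]\delta(z)^{n+1}$. I would then expand $\delta(z)^{n+1}$ by the binomial theorem in its two summands, letting $i$ count how many of the $n+1$ factors contribute $\tfrac{z^{m-1}}{1-xz}$; since the other summand is free of $x$, all of $x^k$ must come from $(1-xz)^{-i}$, producing $\binom{i+k-1}{k}$ together with a factor $z^{(m-1)i+k}$ (which forces $0\le i\le\lfloor n/(m-1)\rfloor$). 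The surviving factor $\bigl(\tfrac{1-z^{m-1}}{1-z}\bigr)^{n+1-i}=(1-z^{m-1})^{n+1-i}(1-z)^{-(n+1-i)}$ is then expanded with $j$ recording the power of $z^{m-1}$ taken (contributing the sign $(-1)^{mj-j}$ and $\binom{n+1}{i}$ from the outer coefficient), while $(1-z)^{-(n+1-i)}$ supplies $\binom{2n-mi-mj+j-k}{n-i}$; collecting these terms assembles (\ref{132longcoeff}).

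The first two steps are routine given \tref{general} and the Dyck‑path recursions. The hard part will be the last step: arranging the binomial expansions of $\delta(z)^{n+1}$ so that the powers of $z$ and $x$ align and the summation ranges come out exactly as stated, together with the (easy but necessary) checks of the degenerate case $m=2$, where $\tfrac{1-z^{m-1}}{1-z}=1$, and of $k=0$.
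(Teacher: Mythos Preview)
Your proposal is correct and follows essentially the same approach as the paper: reduce via \tref{general}(a) to counting $R^m$-matches in Dyck paths, derive the recursion by expanding the last horizontal segment (Recursion~1), substitute $F=tB$ and apply Lagrange inversion, then expand $\delta(z)^{n+1}$ by the binomial theorem with $i$ counting the copies of $\tfrac{z^{m-1}}{1-xz}$ and $j$ the power of $z^{m-1}$ in the remaining factor. The only cosmetic difference is that the paper computes $B|_{t^{n-1}}$ and substitutes $n\to n+1$ at the end, whereas you pass directly to $B|_{t^n}=[t^{n+1}]F$; these are equivalent.
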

\begin{proof}
	Referring to \tref{general}(a), the number of consecutive  $1\cdots m$-matches in $\sg\in\Snn(132)$ is equal to the number of path pattern $\Phi'(1\cdots m)=R^m$-matches in the corresponding Dyck path $\Phi(\sg)$. A horizontal segment of size $k\geq m$ contains $k-m+1$ pattern $R^m$-matches, and we can expand the last segment of a Dyck path to get a recursion for $B$.
	
	Let $k$ denote the size of the last horizontal segment, then there are $k$ Dyck path structures above the last segment, each with a weight of $B$. The weight of the last horizontal segment is $t^k$ when $k\leq m-1$ and $x^{k-m+1}t^k$ when $k\geq m$. Thus we have
	\begin{equation}\label{132longB}
	B=1+tB+\cdots+t^{m-1}B^{m-1} + t^{m}xB^{m} +t^{m+1}x^2B^{m+1}+\cdots=\frac{1-t^{m-1}B^{m-1}}{1-tB}+\frac{t^{m-1}B^{m-1}}{1-txB}.
	\end{equation}
	
	We can multiply both sides of equation (\ref{132longB}) with $t$  and substitute $tB$ with $F$ to obtain
	\begin{equation}
	F=t\left( \frac{1-F^{m-1}}{1-F}+\frac{F^{m-1}}{1-xF}\right).
	\end{equation}
	It follows from the Lagrange Inversion Theorem that
	\begin{equation}
	F|_{t^n}=\frac{1}{n}\delta^n(z)\big|_{z^{n-1}},
	\end{equation}
	where $\displaystyle\delta(z)=\frac{1-z^{m-1}}{1-z}+\frac{z^{m-1}}{1-xz}$. Thus,
	\begin{eqnarray}
	B|_{t^{n-1}x^k}&=& F|_{t^nx^k} \nonumber\\
	&=& \frac{1}{n} \left(\frac{1-z^{m-1}}{1-z}+\frac{z^{m-1}}{1-xz}\right)^n\Bigg|_{z^{n-1}x^k} \nonumber\\
	&=& \frac{1}{n} \sum_{i=0}^{\lfloor\frac{n-1}{m-1}\rfloor} \binom{n}{i} z^{(m-1)i} \left(\sum_{a\geq 0}\binom{i+a-1}{a}x^az^a \right) \left( \sum_{j=0}^{n-i}(-z)^{(m-1)j}\right) \nonumber\\
	&&\cdot\left(\sum_{b\geq 0} \binom{n-i+b-1}{b} z^b\right)\Bigg|_{z^{n-1}x^k}\nonumber\\
	&=& \frac{1}{n} \sum_{i=0}^{\lfloor\frac{n-1}{m-1}\rfloor} \binom{n}{i} \binom{i+k-1}{k} z^{mi-i+k} 
	\left( \sum_{j=0}^{n-i}(-z)^{(m-1)j}\right) 
	\left(\sum_{b\geq 0} \binom{n-i+b-1}{b} z^b\right)\Bigg|_{z^{n-1}}\nonumber\\
	&=& \frac{1}{n} \sum_{i=0}^{\lfloor\frac{n-1}{m-1}\rfloor} \sum_{j=0}^{n-i} (-1)^{mj-j} \binom{n}{i} \binom{i+k-1}{k} 
	\binom{2n-mi-mj+j-k-2}{n-i-1},
	\end{eqnarray}
	and equation (\ref{132longcoeff}) follows by substituting $n$ with $n+1$.
\end{proof}

Setting $m=3$, we obtain the formula for the coefficient of $t^nx^k$ in $\AAAover{132}{123}(t,1,x)$ which proves equation (\ref{thm4eq}) in \tref{4}.

\subsection{The function $\BBxt{132}{a12\cdots(a-1)(a+1)\cdots m}(t,x)$}
\begin{thm}\label{general132}
	The function $\BBxt{132}{a12\cdots(a-1)(a+1)\cdots m}(t,x)$ satisfies the following recursion:
	\begin{equation}\label{132general}
	B=\frac{1+t^{m-1}B^{m-a}(x-1)(B-1)}{1-tB}.
	\end{equation}
\end{thm}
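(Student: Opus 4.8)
The plan is to push the problem onto Dyck paths via Theorem~\ref{thm:general}(a). Write $\gamma=a\,1\,2\cdots(a-1)\,(a+1)\cdots m$ and assume $2\le a\le m-1$ (the excluded cases $a=1$ and $a=m$ are covered by other statements). First I would check $\gamma\in\Sn{m}(132)$ and compute $\Phi(\gamma)$: the permutation $\gamma$ has a single descent, at position~$1$, and its left-to-right minima are exactly $a$ and $1$, so by \lref{p1} the path $\Phi(\gamma)$ has precisely two horizontal segments, of lengths $1$ and $m-1$; since $\gamma_1=a$ its initial block of down-steps has length $m+1-a$, and since the letter $m$ occupies the last column there is no interior return. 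Hence $\Phi(\gamma)=D^{m+1-a}RD^{a-1}R^{m-1}$ and $\Phi'(\gamma)=RD^{a-1}R^{m-1}$, so by Theorem~\ref{thm:general}(a), $B=\BBxt{132}{\gamma}(t,x)$ is the generating function for Dyck paths with $t$ marking the size and $x$ marking the number of occurrences of the step pattern $RD^{a-1}R^{m-1}$.

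Next I would record what such an occurrence is: reading a path as an alternating list of horizontal segments (maximal $R$-runs) and drops (maximal $D$-runs), an $RD^{a-1}R^{m-1}$-match corresponds to a pair of consecutive horizontal segments $(H,H')$ for which the drop between them has length exactly $a-1$ and $|H'|\ge m-1$, and each such pair contributes exactly one match.

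Then I would run the first Dyck-path recursion of Section~\ref{sec:recursions}, expanding the last horizontal segment $R^k$; above it sit $k$ Dyck-path structures, each carrying a factor $B$. The crux, and the one place where $2\le a\le m-1$ is used, is the observation that \emph{every} $RD^{a-1}R^{m-1}$-match of the whole path is either internal to one of the $k$ sub-paths or has its block $R^{m-1}$ lying at the start of the terminal segment $R^k$: indeed, the only other way a segment $H'$ with $|H'|\ge m-1$ could be preceded by a drop of length exactly $a-1$ is for $H'$ to be the first segment of one of the sub-paths (possibly reached after some empty ones), but then $|H'|$ is at most that sub-path's initial block of down-steps, which is at most the whole preceding drop, namely $a-1<m-1$ --- impossible. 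Consequently the ``last segment $=R^k$'' contribution is $t^kB^k$ when $k<m-1$, and when $k\ge m-1$ one gets, in addition, exactly one extra match precisely when the drop immediately preceding $R^k$ has length $a-1$, i.e.\ when the last $a-2$ of the $k$ sub-paths are empty and the one before them is not; this refines the contribution to $t^k\bigl(B^k+(x-1)(B-1)B^{k-a+1}\bigr)$. Summing over $k$,
\[
B=1+\sum_{k\ge 1}t^kB^k+(x-1)(B-1)\sum_{k\ge m-1}t^kB^{k-a+1}
=\frac{1}{1-tB}+(x-1)(B-1)\,\frac{t^{m-1}B^{m-a}}{1-tB},
\]
which is exactly equation~(\ref{132general}).

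The main obstacle is establishing the italicized observation above: one must be sure that, in the last-segment decomposition, no match straddles an interior ``seam'', and dually that lengthening the drop below the terminal segment by the connecting down-steps neither destroys an internal match of a sub-path nor manufactures a new one when $k<m-1$. Once this is verified the remainder is a routine geometric-series simplification, and --- pleasantly --- no auxiliary generating function is needed.
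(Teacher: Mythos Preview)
Your argument is correct and follows essentially the same route as the paper: translate via Theorem~\ref{thm:general}(a) to the path pattern $RD^{a-1}R^{m-1}$, run Recursion~1 on the last horizontal segment, and add a correction term for the unique extra match that can occur when $k\ge m-1$ and the drop before $R^k$ has length exactly $a-1$. Your parametrisation by $k$ and the paper's by $i=k-(m-1)$ produce the same geometric series.

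The one place where you go beyond the paper is the italicised ``no straddling'' check. The paper simply asserts that the only exceptional case is when the last segment is part of an $RD^{a-1}R^{m-1}$-match, whereas you actually justify why a match cannot cross an interior seam: if the long $R$-block were the first segment of some sub-path $P_i$, the Dyck condition forces that segment to have length at most the initial $D$-run of $P_i$, hence at most the full preceding drop $a-1<m-1$. This is exactly the missing verification, and your use of $2\le a\le m-1$ is what makes the strict inequality work (for $a=1$ the pattern degenerates to $R^m$ and the count-per-segment changes, which is why Section~6.1 needs a different recursion).
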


\begin{proof}
	By \tref{general}(a), the number of consecutive
	pattern $\ga=a12\cdots(a-1)(a+1)\cdots m$-matches in $\sg\in\Snn(132)$ is equal to the number of path pattern $\Phi'(\ga)=RD^{a-1}R^{m-1}$-matches in the corresponding Dyck path $\Phi(\sg)$. We shall expand the last segment of a Dyck path $P$ using the first Dyck path recursion. Suppose the size of the last segment is $k$, then there are $k$ Dyck path structures above the last segment each with a weight of $B$. 
	
	The weight of the last segment is $t^k$, except when the last segment is part of a pattern $RD^{a-1}R^{m-1}$, which means that the Dyck path $P$ ends with $RD^{a-1}R^{m-1+i}$ for some $i\geq 0$. $\ga_1=a$ means that the path is at the \thn{m-a+i} diagonal and there are $m+1-a+i$ Dyck path structures, $P_1,\ldots,P_{m+1-a+i}$, before the second step of the last $RD^{a-1}R^{m-1+i}$-match, in which $P_{m+1-a+i}$ must be nonempty. Thus the total  weight of the $m+1-a+i$ Dyck path structures is $B^{m-a+i}(B-1)$. The weight of the path $RD^{a-1}R^{m-1+i}$ from the second step is $t^{m-1+i}x$, and the case when the last segment is part of a pattern $RD^{a-1}R^{m-1+i}$ gives a correction weight of
	\begin{equation*}
	t^{m-1}B^{m-a}(x-1)(B-1)+t^{m}B^{m-a+1}(x-1)(B-1)+\cdots=\frac{t^{m-1}B^{m-a}(x-1)(B-1)}{1-tB}
	\end{equation*}
	to the generating function $B$, and the recursion for $B$ is
	\begin{equation}
	B=\frac{1}{1-tB}+\frac{t^{m-1}B^{m-a}(x-1)(B-1)}{1-tB}=\frac{1+t^{m-1}B^{m-a}(x-1)(B-1)}{1-tB}.\qedhere
	\end{equation}
\end{proof}

\subsection{The function $\BBxt{132}{\ga}(t,x)$ when $\ga_1=m-1$ and $\ga_m=m$}
\begin{thm}\label{thm:132new1}
	For any $\ga\in\Sn{m}(132)$ with $\ga_1=m-1$ and $\ga_m=m$, the function  $\BBxt{132}{\ga}(t,x)$ satisfies the following recursion:
	\begin{equation}
	B=\frac{1+t^{m-1}(x-1)(B^2-B)}{1-tB}.
	\end{equation}
\end{thm}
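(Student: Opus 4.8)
The plan is to transport the problem to Dyck paths via Theorem~\ref{thm:general}(a), identify the resulting path pattern as $RWR$ for a Dyck path $W$ of size $m-2$, and then show that counting $RWR$-matches over $\Dn$ is independent of the shape of $W$.

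Since $\ga_m=m$, Theorem~\ref{thm:general}(a) gives $B=\BBxt{132}{\ga}(t,x)=\sum_{n\ge 0}t^n\sum_{P\in\Dn}x^{\Phi'(\ga)\text{-mch}(P)}$, so it suffices to understand $\Phi'(\ga)$. Because $\ga_1=m-1$, Krattenthaler's path $\Phi(\ga)$ begins with exactly $m+1-\ga_1=2$ down-steps (the first right-step of $\Phi(\ga)$ lies just below $\ga_1$, a left-to-right minimum). Because $\ga_m=m$, \lref{p1} puts the number $m$ in the last column, so the first return of $\Phi(\ga)$ is at $(m,m)$; hence $\Phi(\ga)$ has no interior return and ends with a right-step. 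Deleting the two leading down-steps therefore yields $\Phi'(\ga)=R\,W\,R$, where $W$ is a word with $m-2$ down-steps and $m-2$ right-steps, and the conditions ``$\Phi(\ga)$ has no interior return'' and ``$\Phi(\ga)$ ends at $(m,m)$'' translate exactly into $W$ staying weakly below the diagonal and ending on it, i.e.\ $W\in\mathcal{D}_{m-2}$. We never need to know which element of $\mathcal{D}_{m-2}$ the word $W$ is — only that it is one.

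It therefore suffices to prove: for \emph{every} $W\in\mathcal{D}_{m-2}$ the function $B=\sum_{n}t^n\sum_{P\in\Dn}x^{RWR\text{-mch}(P)}$ satisfies $B=1+tB^2+t^{m-1}(x-1)B(B-1)$, which is equivalent to the displayed recursion. I would prove this with the first-return decomposition (Recursion~2) $P=DP_1RP_2$, where $P_1,P_2\in\Dn$ and the middle $R$ is the step reaching the first-return point. The crucial geometric observation is that an $RWR$-match inside a Dyck path, read as a sub-path, is a copy of the size-$(m-2)$ Dyck path $W$ flanked by one right-step on each side; it therefore occupies a translate of the size-$(m-2)$ Dyck triangle whose own diagonal lies \emph{strictly} below the main diagonal, so the whole match stays strictly below the main diagonal at every point. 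From this one reads off: no match uses the leading $D$ (matches begin with $R$); no match has the middle $R$ in its interior, and no match begins at the middle $R$ (either would force the path to touch the main diagonal inside the match — in the second case the $W$ after the middle $R$ would have to return to the main diagonal and then be followed by a right-step, which is impossible). Hence every $RWR$-match lies entirely inside $P_1$, entirely inside $P_2$, or is the unique ``boundary'' match $R\,W\,(\text{middle }R)$; this boundary match occurs precisely when $P_1=P_1'W$ for a necessarily nonempty Dyck path $P_1'$ glued to $W$ at a diagonal point, and because the gluing is at a diagonal point it creates no new straddling match, so the $RWR$-matches of $P_1$ are exactly those of $P_1'$. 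Assembling the generating function, the $P_1$-side contributes $B+(x-1)t^{m-2}(B-1)$ and the $P_2$-side contributes $B$, giving $B=1+t\bigl(B+(x-1)t^{m-2}(B-1)\bigr)B$; this is the desired recursion, and solving the resulting quadratic produces the closed form. As a consistency check, the choice $W=D^{m-2}R^{m-2}$ corresponds to $\ga=(m-1)\,1\,2\cdots(m-2)\,m$ and $\Phi'(\ga)=RD^{m-2}R^{m-1}$, recovering Theorem~\ref{general132} with $a=m-1$; the small cases $m=2,3$ (where $W$ is empty or $DR$) can also be matched against \tref{6} and Lemma~\ref{lem:1321}.

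The step I expect to be the main obstacle is the bookkeeping in the last paragraph: making the ``stays strictly below the main diagonal'' observation precise and using it to rule out, case by case, every way an $RWR$-match could straddle the first-return point or the $P_1'$--$W$ junction, so that the single boundary match is the only crossing contribution and nothing is double-counted.
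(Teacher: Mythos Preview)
Your argument is correct, and it takes a genuinely different route from the paper's own proof.  The paper proves \tref{132new1} with Recursion~1 (expand the last horizontal segment): writing the base contribution $\sum_{k\ge 0}t^kB^k=\frac{1}{1-tB}$ and then adding a correction term for the event that an $RP_0R$-match has its final $R$ inside the last segment, summed over all possible last-segment lengths; the details are deferred to ``similar to Section~6.2.''  You instead use Recursion~2 (the first-return decomposition $P=DP_1RP_2$) and argue directly, via the level inequality $j-i\ge 2$ for the start of any $RWR$-match, that every match is in $P_1$, in $P_2$, or is the single boundary match ending at the middle $R$; the factorisation $P_1=P_1'W$ then converts that boundary match into the factor $(x-1)t^{m-2}(B-1)$ on the $P_1$-side.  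Your level argument also handles the $P_1'/W$ junction cleanly: any straddling match would force a vertex at level~$1$ strictly inside the match, contradicting the bound level $\ge \ell-1\ge 2$ away from the final vertex.  What your approach buys is a self-contained proof that does not lean on Section~6.2 and that makes the shape-independence of $W\in\mathcal D_{m-2}$ completely transparent (the only facts used about $W$ are its step counts and the Dyck inequality); what the paper's approach buys is uniformity with the surrounding theorems in Sections~6.2--6.6, all of which are phrased via Recursion~1 with a last-segment correction.  Both routes land on the same quadratic $B=1+tB^2+t^{m-1}(x-1)B(B-1)$, which is equivalent to the displayed recursion.
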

\begin{proof}
	By \tref{general}(a), the distribution of such a consecutive pattern $\ga$ in $\Snn(132)$ is equal to the distribution of $\Phi'(\ga)$ in $\Dn$. In this case, $\Phi'(\ga)=RP_0R$, where $P_0=\Phi(\ga_2\cdots\ga_{m-1})$ is a Dyck path of size $m-2$.
	
	We use the first Dyck path recursion for $B$. By expanding the last segment of a Dyck path $P$ when the last segment is of  size $k$, there are $k$ Dyck path structures above the last segment each with a weight of $B$. The weight of the last segment is $t^k$, except when the last segment is part of a pattern $RP_0R$. Similar to Section 6.2, the correction weight is
	\begin{equation*}
	t^{m-1}B(x-1)(B-1)+t^{m}B^2(x-1)(B-1)+\cdots=\frac{t^{m-1}(x-1)(B^2-B)}{1-tB},
	\end{equation*}
	and the recursion for $B$ is
	\begin{equation}
	B=\frac{1}{1-tB}+\frac{t^{m-1}(x-1)(B^2-B)}{1-tB},
	\end{equation}
	which proves the theorem.
\end{proof}

\subsection{The function $\BBxt{132}{2\cdots m1}(t,x)$}
\begin{thm}The function $\BBxt{132}{2\cdots m1}(t,x)$ satisfies the following recursion:
	\begin{equation}\label{132long1}
	B=t\frac{B^{m-1}+(x-1)(B-1)^{m-1}}{B^{m-4}(B-1)},
	\end{equation}and
	\begin{equation}\label{132longcoeff1}
	\BBxt{132}{2\cdots m1}(t,x)|_{t^nx^k}=\frac{1}{n} \binom{n}{k} \sum_{i=0}^{n-k} (-1)^{mk+k+i+n+1}\binom{mi-i-n}{n+1-mk-k}.
	\end{equation}
\end{thm}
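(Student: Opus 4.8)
The plan is to follow the three-step approach used throughout Sections~5 and~6. \emph{First}, translate the consecutive pattern $\ga=2\cdots m1$ into a Dyck path pattern via \tref{general}. Since $\ga=23\cdots m1$ satisfies $\ga_{m-1}\ga_m=m1$, part~(b) of \tref{general} tells us that the distribution of consecutive $\ga$-matches in $\Snn(132)$ equals the distribution of $\Phi''(\ga)$-matches in $\Dn$. I would compute $\Phi(\ga)$ from \lref{p1}: the increasing run $2,3,\ldots,m$ forms a single horizontal segment of length $m-1$ whose least entry $2$ lies above its first right-step, which forces $\Phi(\ga)$ to begin with the steps $D^{m-1}R^{m-1}$ and return to the main diagonal, after which the descent from $m$ down to $1$ contributes a terminal peak $DR$; hence $\Phi(\ga)=D^{m-1}R^{m-1}DR$, so that $\Phi'(\ga)=R^{m-1}DR$ and $\Phi''(\ga)=R^{m-1}D$. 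Consequently the number of consecutive $\ga$-matches of $\sg\in\Snn(132)$ equals the number of $R^{m-1}D$-matches of $\Phi(\sg)$, equivalently the number of horizontal segments of $\Phi(\sg)$ of length at least $m-1$ that are not the last segment (each such segment contributes precisely one occurrence of $R^{m-1}D$).

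\emph{Second}, set up the functional equation with the first Dyck path recursion. As in Sections~3.3 and~5.2 I would introduce the auxiliary series $B_1(t,x):=\sum_{n\ge0}t^n\sum_{P\in\Dn}x^{R^{m-1}D\text{-mch}(PD)}$, i.e.\ the same generating function but with the last horizontal segment treated as interior (a phantom trailing $D$). Expanding the last horizontal segment: above a last segment of size $k$ sit $k$ Dyck substructures, each immediately followed by a down-step and hence weighted by $B_1$, while a last segment of size $k$ carries an extra factor $x$ in $B_1$ exactly when $k\ge m-1$ and no factor of $x$ in $B$; summing the resulting geometric series gives $B_1=\dfrac{1+(x-1)t^{m-1}B_1^{m-1}}{1-tB_1}$ and $B=\dfrac{1}{1-tB_1}$. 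Eliminating $B_1$ through $tB_1=(B-1)/B$ then produces $B^{m-3}(B-1)=t\bigl(B^{m-1}+(x-1)(B-1)^{m-1}\bigr)$, which is equation~(\ref{132long1}).

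\emph{Third}, extract the coefficients by Lagrange inversion. Passing to $F=tB_1$ (equivalently, to $U=B-1$), one has $F=t\phi(F)$ with $\phi(z)=\dfrac{1+(x-1)z^{m-1}}{1-z}$ and $\phi(0)=1$, while $B=1/(1-F)$, so the Lagrange--B\"urmann formula applies and gives, for $n\ge 1$,
\[
[t^n]\,B=\frac1n\,[z^{n-1}]\Bigl(\frac{d}{dz}\frac1{1-z}\Bigr)\phi(z)^n=\frac1n\,[z^{n-1}]\,\frac{\bigl(1+(x-1)z^{m-1}\bigr)^n}{(1-z)^{n+2}}.
\]
Expanding $\bigl(1+(x-1)z^{m-1}\bigr)^n$ and $(1-z)^{-(n+2)}$ as binomial series, extracting the coefficient of $z^{n-1}$ and then of $x^k$ (using $[x^k](x-1)^i=(-1)^{i-k}\binom ik$, the absorption identity $\binom ni\binom ik=\binom nk\binom{n-k}{i-k}$, and one upper-negation step on the surviving binomial coefficient), I would obtain the closed form~(\ref{132longcoeff1}).

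I expect the main obstacle to lie in two places: reading off $\Phi(2\cdots m1)$, and hence $\Phi''$, carefully from the construction of Krattenthaler's map (an error here would propagate into the wrong path pattern and hence the wrong recursion), and the final binomial bookkeeping that collapses the double sum emerging from Lagrange--B\"urmann into the single-sum expression~(\ref{132longcoeff1}). Everything in between is a routine reprise of the first Dyck path recursion together with the auxiliary-function device already exploited in Sections~3.3 and~5.2.
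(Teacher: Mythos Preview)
Your proposal is correct and follows essentially the same route as the paper: the same translation to the path pattern $\Phi''(2\cdots m1)=R^{m-1}D$ via \tref{general}(b), the same auxiliary function $B_1$ with the ``last segment interior'' convention, and the identical pair of recursions $B_1=\dfrac{1+(x-1)t^{m-1}B_1^{m-1}}{1-tB_1}$, $B=\dfrac{1}{1-tB_1}$, eliminated the same way to yield~(\ref{132long1}). The only difference is in the coefficient extraction: the paper applies Lagrange inversion directly to $B=t\,\delta(B)$ with $\delta(z)=\dfrac{z^{m-1}+(x-1)(z-1)^{m-1}}{z^{m-4}(z-1)}$, whereas you apply Lagrange--B\"urmann to $F=tB_1$ (which has $F(0)=0$, so the hypotheses are cleaner) and then read off $B=1/(1-F)$; either route leads to the same binomial bookkeeping.
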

\begin{proof}
	Referring to \tref{general}(b), the number of consecutive  $\ga=2\cdots m1$-matches in $\sg\in\Snn(132)$ is equal to the number of path pattern $\Phi''(\ga)=R^{m-1}D$-matches in the corresponding Dyck path $\Phi(\sg)$, which is equal to the number of horizontal segments of size $k\geq m-1$ that are not the last segment. We shall define a generating function $B_1$ where we track the same statistics as $B$ and we suppose the last segment is an interior segment.
	
	We first derive the recursion for $B_1$. Let $k$ denote the size of the last horizontal segment, then there are $k$ Dyck path structures above the last segment, each with a weight of $B_1$. The weight of the last horizontal segment is $t^k$ when $k< m-1$ and $xt^k$ when $k\geq m-1$ since it is considered to be interior. Thus we have
	\begin{equation}\label{132long1B1}
	B_1=1+tB_1+\cdots+t^{m-2}B_1^{m-2} + t^{m-1}xB_1^{m-1} +t^{m}xB_1^{m}+\cdots=\frac{1+(x-1)t^{m-1}B_1^{m-1}}{1-tB_1}.
	\end{equation}
	For the function $B$, we expand the last segment which is of size $k$. There are $k$ Dyck path structures above the last segment, each with a weight of $B_1$. The weight of the last horizontal segment is $t^k$ for any $k$ since it does not contain the pattern $R^{m-1}D$. It follows that
	\begin{equation}\label{132long1B}
	B=1+tB_1+\cdots=\frac{1}{1-tB_1}.
	\end{equation}
	Equation (\ref{132long1B}) implies that $B_1=\frac{B-1}{tB}$. Substituting $B_1$ with $\frac{B-1}{tB}$ in (\ref{132long1B1}) and multiplying both sides with $\frac{tB^2}{B-1}$ gives (\ref{132long1}).

	Next, we shall compute the coefficient using the Lagrange Inversion Theorem. It follows that
	\begin{equation}
	B|_{t^n}=\frac{1}{n}\delta^n(z)\big|_{z^{n-1}},
	\end{equation}
	where $\displaystyle\delta(z)=\frac{z^{m-1}+(x-1)(z-1)^{m-1}}{z^{m-4}(z-1)}$. Thus,
	\begin{eqnarray}
	B|_{t^nx^k}&=& \frac{1}{n} (z^{m-1}+(x-1)(z-1)^{m-1})^n (z^{m-4}(z-1))^{-n}\big|_{z^{n-1}x^k}\nonumber\\
	&=& \frac{1}{n} ((z-1)^{m-1}x+(z^{m-1}-(z-1)^{m-1}))^n (z-1)^{-n}\big|_{z^{mn-3n-1}x^k}\nonumber\\
	&=& \frac{1}{n} \binom{n}{k} (z^{m-1}-(z-1)^{m-1})^{n-k} (z-1)^{-n}\bigg|_{z^{mn-3n-1}}\nonumber\\
	&=& \frac{1}{n} \binom{n}{k} \sum_{i=0}^{n-k} (-1)^i(z-1)^{mi-i-n}z^{(m-1)(n-k-i)}\bigg|_{z^{mn-3n-1}}\nonumber\\
	&=& \frac{1}{n} \binom{n}{k} \sum_{i=0}^{n-k} (-1)^{mk+k+i+n+1}\binom{mi-i-n}{n+1-mk-k}.\qedhere
	\end{eqnarray}
\end{proof}

Setting $m=3$, we obtain the formula for the coefficient of $t^nx^k$ in $\AAAover{132}{231}(t,1,x)$ which proves equation (\ref{thm5eq}) in \tref{5}.

\subsection{The function $\BBxt{132}{a23\cdots (a-1)(a+1)\cdots m1}(t,x)$}

\begin{thm}\label{general2132}
	The function $\BBxt{132}{a23\cdots (a-1)(a+1)\cdots m1}(t,x)$ satisfies the following recursion:
	\begin{equation}\label{132general1}
	B=\frac{tB^{m-a+2}+t^{a-2}(x-1)(B-1-tB)(B-1)^{m-a}}{(B-1)B^{m-a-1}}.
	\end{equation}
\end{thm}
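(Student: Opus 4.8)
The plan is to follow the template used in the preceding subsections (the proofs of Theorem~\ref{general132} and of the recursion for $\BBxt{132}{2\cdots m1}$). Since $\ga=a23\cdots(a-1)(a+1)\cdots m1$ ends in $m1$, Theorem~\ref{thm:general}(b) applies: the distribution of consecutive $\ga$-matches in $\Snn(132)$ equals the distribution of $\Phi''(\ga)$-matches in $\Dn$, and $\Phi''(\ga)=\Phi'(\ga_1\cdots\ga_{m-1})D$. Now the prefix $\ga_1\cdots\ga_{m-1}=a,2,3,\ldots,(a-1),(a+1),\ldots,m$ reduces to the permutation $(a-1),1,2,\ldots,(a-2),a,(a+1),\ldots,(m-1)$, which is exactly the pattern $c\,1\,2\cdots(c-1)(c+1)\cdots M$ treated in Theorem~\ref{general132} with $c=a-1$ and $M=m-1$; its image under $\Phi'$ was computed there to be $RD^{c-1}R^{M-1}=RD^{a-2}R^{m-2}$. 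Hence $\Phi''(\ga)=RD^{a-2}R^{m-2}D$. (For $a=2$ this degenerates into the single run $R^{m-1}D$ already handled, so the statement is understood for $a\ge 3$, where the leading step and the block $R^{m-2}$ belong to distinct horizontal segments.)

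Next I would read off the combinatorial meaning of an $RD^{a-2}R^{m-2}D$-match in a Dyck path $\Phi(\sg)$: it is a horizontal segment $H_1$ of size \emph{exactly} $m-2$ that is neither the first nor the last segment, and whose immediately preceding maximal run of down-steps has length \emph{exactly} $a-2$ (equivalently, $H_1$ lies exactly $a-2$ diagonals below the previous segment, which must be nonempty and supplies the leading $R$). As in the treatment of $\BBxt{132}{2\cdots m1}$, I would introduce the auxiliary series $B_1$ counting Dyck paths in which the last horizontal segment is regarded as interior (a phantom $D$ appended), so that $B=\frac{1}{1-tB_1}$. Expanding the last horizontal segment by the first Dyck path recursion: a last segment of size $k\ne m-2$ contributes $t^kB_1^{k}$ as usual; a last segment of size $k=m-2$ may now play the role of $H_1$, and among the $m-2$ Dyck structures stacked above it the ``exactly $a-2$'' condition forces the bottom $a-3$ of them to be empty and the next one up to be nonempty (with the top $m-a$ structures free), contributing $t^{m-2}\bigl(B_1^{\,m-2}+(x-1)B_1^{\,m-a}(B_1-1)\bigr)$. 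Summing everything gives $B_1=\frac{1}{1-tB_1}+(x-1)t^{m-2}B_1^{\,m-a}(B_1-1)$, i.e.\ $B-B_1=(x-1)t^{m-2}(1-B_1)B_1^{\,m-a}$.

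Finally I would eliminate $B_1$: from $B=\frac{1}{1-tB_1}$ one has $B_1=\frac{B-1}{tB}$, and substituting this into $B-B_1=(x-1)t^{m-2}(1-B_1)B_1^{\,m-a}$ and clearing denominators yields exactly $(B-1)B^{m-a}=tB^{m-a+2}+t^{a-2}(x-1)(B-1-tB)(B-1)^{m-a}$, which rearranges to the asserted recursion~(\ref{132general1}). The main obstacle is the bookkeeping in the middle step: correctly encoding that the interior segment $H_1$ lies exactly $a-2$ (not merely at least $a-2$) diagonals below its predecessor and that this predecessor is nonempty, while simultaneously ruling out $H_1$ being the first segment — i.e.\ deciding precisely which of the Dyck substructures above $H_1$ are forced empty and which nonempty, and not over- or under-counting at the boundary cases $a=3$ and $a=m$. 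Once that contribution is written correctly, the remaining steps are routine algebra, and the identity can be sanity-checked at $x=1$, where $B_1=B$ and the recursion collapses to the Catalan equation $B=1+tB^2$.
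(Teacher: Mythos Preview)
Your proposal is correct and follows essentially the same route as the paper: identify $\Phi''(\ga)=RD^{a-2}R^{m-2}D$ via Theorem~\ref{thm:general}(b), introduce the auxiliary series $B_1$ in which the last segment is treated as interior so that $B=\frac{1}{1-tB_1}$, expand the last segment with Recursion~1 to obtain the correction $(x-1)t^{m-2}B_1^{m-a}(B_1-1)$, and then substitute $B_1=\frac{B-1}{tB}$. Your bookkeeping (``bottom $a-3$ structures empty, next one nonempty, top $m-a$ free'') is exactly the combinatorics behind the paper's terser phrase ``there are $m+1-a$ Dyck path structures \ldots\ in which $P_{m+1-a}$ must be nonempty''; the paper simply suppresses the forced empty structures, and the final algebra you sketch matches the paper's.
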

\begin{proof}
	By \tref{general}(b), the number of consecutive
	pattern $\ga=a23\cdots (a-1)(a+1)\cdots m1$-matches in $\sg\in\Snn(132)$ is equal to the number of path pattern $\Phi''(\ga)=RD^{a-2}R^{m-2}D$-matches in the corresponding Dyck path $\Phi(\sg)$. We define $B_1$ to be a generating function of Dyck paths tracking the same statistics as $B$ and we suppose the last segment is an interior segment, then we can compute a recursion for $B_1$ by expanding the last segment. Suppose the size of the last segment is $k$, then there are $k$ Dyck path structures above the last segment each with a weight of $B$.
	
	The weight of the last segment is $t^k$, except when the last segment is part of a pattern $\Phi''(\ga)=RD^{a-2}R^{m-2}D$, which means that the Dyck path $P$ ends with $RD^{a-2}R^{m-2}$, and the extended path $PD$ contains $RD^{a-2}R^{m-2}D$. $\ga_1=a$ means that the path is at the \thn{m-a} diagonal and there are $m+1-a$ Dyck path structures, $P_1,\ldots,P_{m+1-a}$, before the second step of the last $\Phi''(\ga)$-match, in which $P_{m+1-a}$ must be nonempty. Thus the total weight of the $m+1-a$ Dyck path structures is $B_1^{m-a}(B_1-1)$. The weight of the path $RD^{a-2}R^{m-2}$ from the second step is $t^{m-2}x$, and the case when the last segment is part of a pattern $RD^{a-2}R^{m-2}D$ gives a correction weight of
	\begin{equation*}
	t^{m-2}B_1^{m-a}(x-1)(B_1-1)
	\end{equation*}
	to the generating function $B_1$, and the recursion for $B_1$ is
	\begin{equation}\label{long132B1}
	B_1=\frac{1}{1-tB_1}+t^{m-2}B_1^{m-a}(x-1)(B_1-1).
	\end{equation}
	For the function $B$, since the last horizontal segment of a Dyck path does not contribute to pattern $RD^{a-2}R^{m-2}D$-matches, we expand the last segment of a Dyck path to obtain
	\begin{equation}\label{long132B}
	B=\frac{1}{1-tB_1}.
	\end{equation}
	One can prove equation (\ref{132general1}) from equation (\ref{long132B1}) by the substitution in equation  (\ref{long132B}).
\end{proof}

\subsection{The function $\BBxt{132}{\ga}(t,x)$ when $\ga_1=m-1$, $\ga_{m-1}=m$ and $\ga_m=1$}
\begin{thm}
	For any $\ga\in\Sn{m}(132)$ with $\ga_1=m-1$, $\ga_{m-1}=m$ and $\ga_m=1$, the function $\BBxt{132}{\ga}(t,x)$ satisfies the following recursion:
	\begin{equation}
	B=\frac{tB^{3}}{B-1}+t^{m-3}(x-1)(B-1-tB).
	\end{equation}
\end{thm}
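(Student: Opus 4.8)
The plan is to adapt the proof of Theorem \ref{general2132} with its increasing middle block replaced by an arbitrary $132$-avoiding word; note that putting $a=m-1$ in the recursion (\ref{132general1}) gives $B=\frac{tB^{3}+t^{m-3}(x-1)(B-1-tB)(B-1)}{B-1}=\frac{tB^{3}}{B-1}+t^{m-3}(x-1)(B-1-tB)$, so the target is exactly that specialization. First I would invoke \tref{general}(b): since $\ga_{m-1}\ga_m=m1$, the distribution of consecutive $\ga$-matches over $\Snn(132)$ equals the distribution of $\Phi''(\ga)$-matches over $\Dn$, so $B=\sum_{n\ge0}t^n\sum_{P\in\Dn}x^{\Phi''(\ga)\text{-mch}(P)}$ and everything reduces to one fixed path pattern. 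To pin it down: $\ga_1=m-1$ makes $\Phi(\ga)$ start with $D^2$; $\ga_{m-1}=m$ puts the entry $m$ in column $m-1$, so by \lref{p1} the last right step before the first return of $\Phi(\ga)$ lies in column $m-1$; and $\ga_m=1$ makes $\Phi(\ga)$ finish with $DR$. Writing $P_0$ for the portion of $\Phi(\ga)$ over columns $2,\dots,m-2$, the sub-path principle from the proof of \tref{general} forces $P_0=\Phi(\rho)$ with $\rho:=\red(\ga_2\cdots\ga_{m-2})\in\Sn{m-3}(132)$; thus $\Phi(\ga)=D^2R\,\Phi(\rho)\,RDR$ and $\Phi''(\ga)=R\,\Phi(\rho)\,RD$ (for $\rho=\mathrm{id}_{m-3}$ this is $RD^{m-3}R^{m-2}D$, the $a=m-1$ pattern of Theorem \ref{general2132}).

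Next I would reproduce the machinery of that proof. Introduce the auxiliary series $B_1$ tracking the same statistic in the extended path $PD$ (the last horizontal segment treated as interior) and expand the last horizontal segment via the first Dyck path recursion. Because the pattern ends in $D$, the last segment of a genuine Dyck path can never complete a $\Phi''(\ga)$-match, so $B=\frac1{1-tB_1}$ exactly as in (\ref{long132B}), whence $B_1=\frac{B-1}{tB}$; the only correction to the naive recursion for $B_1$ comes from last segments forming the tail of a $\Phi''(\ga)$-match, i.e.\ from paths ending in $R\,\Phi(\rho)\,R$, and the same bookkeeping as in (\ref{long132B1}) (with $a=m-1$) would produce the correction term $t^{m-2}(x-1)B_1(B_1-1)$, hence $B_1=\frac1{1-tB_1}+t^{m-2}(x-1)B_1(B_1-1)$.

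The step I expect to be the main obstacle is to verify that this correction term is blind to the internal shape of $\rho$, depending on $\rho$ only through its size $m-3$. Here the key observation is structural: in $R\,\Phi(\rho)\,R$ the inner piece $\Phi(\rho)$ is a \emph{complete} Dyck path of semilength $m-3$, so it uses exactly $m-3$ columns and $m-3$ down-steps and returns to the diagonal it started on; being a rigid contiguous factor of the pattern it admits no free Dyck-path structures nested inside it, so its total contribution to the recursion is a bare $t^{m-3}$ — precisely as when $\rho=\mathrm{id}_{m-3}$ and $\Phi(\rho)=D^{m-3}R^{m-3}$. All the remaining data (the total weight $B_1(B_1-1)$ of the Dyck-path structures preceding the pattern, with the innermost one forced nonempty, and the leftover $t^{m-2}$ from the $m-2$ columns of $\Phi(\rho)R$) is governed only by which diagonal $\ga_1=m-1$ lands on, i.e.\ only by $a=m-1$, so it coincides with the computation in Theorem \ref{general2132}. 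Making this $\rho$-blindness fully rigorous — either by retracing the first-recursion bookkeeping of Theorem \ref{general2132} and checking that the $\Phi(\rho)$-block contributes nothing beyond $t^{m-3}$, or by a weight-preserving bijection on $\Dn$ that locally rewrites an occurrence of $R\,\Phi(\rho)\,R\,D$ as $R\,\Phi(\rho')\,R\,D$ for any two $132$-avoiding words $\rho,\rho'$ of length $m-3$ while fixing the rest of the path — reduces the computation to the $a=m-1$ case of (\ref{132general1}); substituting $B_1=\frac{B-1}{tB}$ into the $B_1$-recursion and clearing denominators then yields $B=\frac{tB^{3}}{B-1}+t^{m-3}(x-1)(B-1-tB)$. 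The same mechanism, with $R\,\Phi(\rho)\,R$ in place of $R\,\Phi(\rho)\,R\,D$, underlies \tref{132new1}.
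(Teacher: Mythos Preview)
Your proposal is correct and follows essentially the same approach as the paper: invoke \tref{general}(b) to identify $\Phi''(\ga)=R\,P_0\,RD$ with $P_0=\Phi(\ga_2\cdots\ga_{m-2})$ a Dyck path of size $m-3$, introduce the auxiliary series $B_1$ for extended paths, derive $B_1=\frac{1}{1-tB_1}+t^{m-2}(x-1)(B_1^2-B_1)$, and eliminate $B_1$ via $B_1=\frac{B-1}{tB}$. Your extended discussion of why the correction term is blind to the shape of $\rho$ is more explicit than the paper's, which simply records that $P_0$ has size $m-3$ and appeals to the analogous computation in Section~6.5, but the content is the same.
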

\begin{proof}
	This theorem is analogous to \tref{132new1}. By \tref{general}(b), the distribution of such a consecutive pattern $\ga$ in $\Snn(132)$ is equal to the distribution of $\Phi''(\ga)$ in $\Dn$. In this case, $\Phi''(\ga)=RP_0RD$, where $P_0=\Phi(\ga_2\cdots\ga_{m-2})$ is a Dyck path of size $m-3$.	
	
	We define $B_1$ to be a generating function of Dyck paths tracking the same statistics as $B$ and we suppose the last segment is an interior segment. Using the first Dyck path recursion, there are $k$ Dyck path structures above the last segment of size $k$ each with a weight of $B$.
	
	The weight of the last segment is $t^k$, except when the last segment is part of a pattern $\Phi''(\ga)=RP_0RD$. Similar to Section 6.5, the correction weight is $t^{m-2}(x-1)(B_1^2-B_1),$
	which implies that
	\begin{equation}\label{long2132B1}
	B_1=\frac{1}{1-tB_1}+t^{m-2}(x-1)(B_1^2-B_1).
	\end{equation}
	Similarly, we have $\displaystyle B_1=\frac{B-1}{tB}$, and the theorem can be proved by this substitution.
\end{proof}

\section*{Acknowledgment}

The first author and the second author would like to thank the last author, Professor Jeff Remmel who passed away recently, for his mentorship and collaboration.

The authors would also like to thank Quang Bach and Brendon Rhoades for helpful discussions.

\end{document}